\newtheorem{corollary}{Corollary}
\newtheorem{lemma}[corollary]{Lemma}
\newtheorem{definition}[corollary]{Definition}
\newtheorem{proposition}[corollary]{Proposition}
\newtheorem{theorem}[corollary]{Theorem}
\newtheorem{example}[corollary]{Example}
\newtheorem{remark}[corollary]{Remark}
\begin{document}

\title{$L^2$ estimates and existence theorems for the $\overline{\partial}$ operators in infinite dimensions, I}
\author{Jiayang Yu\thanks{School of Mathematics, Sichuan University, Chengdu 610064, P. R. China. {\small\it E-mail:} {\small\tt
 jiayangyu@scu.edu.cn}.} \,\quad\,and\,\quad\,Xu Zhang\thanks{School of Mathematics, Sichuan University, Chengdu 610064, P. R. China. {\small\it E-mail:} {\small\tt
zhang$\_$xu@scu.edu.cn}.}}
\date{}
\maketitle
\def\cc{\mathbb{C}}
\def\zz{\mathbb{Z}}
\def\nn{\mathbb{N}}
\def\rr{\mathbb{R}}
\def\qq{\mathbb{Q}}
\def\dd{\mathbb{D}}
\def\tt{\mathbb{T}}
\def\bb{\mathbb{B}}
\def\ff{\mathbb{F}}

\def\A{\mathcal{A}}
\def\B{\mathcal{B}}
\def\D{\mathcal{D}}
\def\L{\mathcal{L}}
\def\M{\mathcal{M}}
\def\Mperp{\mathcal{M}^\perp}
\def\N{\mathcal{N}}
\def\K{\mathcal{K}}
\def\F{\mathcal{F}}
\def\R{\mathcal{R}}
\def\s{\mathcal{S}}
\def\p{\mathcal{P}}
\def\P{\mathcal{P}}
\def\T{\mathcal{T}}
\def\O{\mathcal{O}}
\def\Z{\mathcal{Z}}

\def\cod{\text{cod}}
\def\fd{\text{fd}}
\def\ker{\text{ker}}
\def\ran{\text{ran}}
\def\mp{\text{mp}}

\def\al{\alpha}
\def\la{\lambda}
\def\ep{\epsilon}
\def\sig{\sigma}
\def\Sig{\Sigma}
\def\cd{\mathbb{C}^d}
\def\bm{\mathcal{M}}
\def\bn{\mathcal{N}}
\def\hN{H^2\otimes \mathbb{C}^N}
\def\ba{\mathcal{A}}
\def\hm{H^2\otimes \mathbb{C}^m}
\def\mb{\mathcal{M}^{\perp}}
\def\pr{{\mathbb C}[z_1,\cdots,z_n]}
\def\nb{\mathcal{N}^{\perp}}
\def\hrd{H^2(\mathbb{D}^n)}
\def\be{\mathcal{E}}
\def\po{{\mathbb C}[z,\,w]}
\def\hr{H^2({\mathbb D}^2)}
\def\bigpa#1{\biggl( #1 \biggr)}
\def\bigbracket#1{\biggl[ #1 \biggr]}
\def\bigbrace#1{\biggl\lbrace #1 \biggr\rbrace}

\def\papa#1#2{\frac{\partial #1}{\partial #2}}
\def\dbar{\bar{\partial}}

\def\oneover#1{\frac{1}{#1}}

\def\meihua{\bigskip \noindent $\clubsuit \ $}
\def\blue#1{\textcolor[rgb]{0.00,0.00,1.00}{#1}}
\def\red#1{\textcolor[rgb]{1.00,0.00,0.00}{#1}}

\def\norm#1{||#1||}
\def\inner#1#2{\langle #1, \ #2 \rangle}

\def\divide{\bigskip \hrule \bigskip}

\def\bigno{\bigskip \noindent}
\def\medno{\medskip \noindent}
\def\smallno{\smallskip \noindent}
\def\bignobf#1{\bigskip \noindent \textbf{#1}}
\def\mednobf#1{\medskip \noindent \textbf{#1}}
\def\smallnobf#1{\smallskip \noindent \textbf{#1}}
\def\nobf#1{\noindent \textbf{#1}}
\def\nobfblue#1{\noindent \textbf{\textcolor[rgb]{0.00,0.00,1.00}{#1}}}

\def\vector#1#2{\begin{pmatrix}  #1  \\  #2 \end{pmatrix}}

\def\cfh{\mathfrak{CF}(H)}

\begin{abstract}
The classical $L^2$ estimate for the $\overline{\partial}$ operators  is a basic tool in complex analysis of several variables. Naturally, it is expected to extend this estimate to infinite dimensional complex analysis, but this is a longstanding unsolved problem, due to the essential difficulty that there exists no nontrivial translation invariance measure in the setting of infinite dimensions. The main purpose in this series of work is to give an affirmative solution to the above problem, and apply the estimates to the solvability of the infinite dimensional $\overline{\partial}$ equations. In this first part, we focus on the simplest case, i.e., $L^2$ estimates and existence theorems for the $\overline{\partial}$ equations on the whole space of $\ell^p$ for $p\in [1,\infty)$. The key of our approach is to introduce a suitable working space, i.e., a Hilbert space for $(s,t)$-forms on $\ell^p$ (for each nonnegative integers $s$ and $t$), and via which we define the $\overline{\partial}$ operator from $(s,t)$-forms to $(s,t+1)$-forms and establish the exactness of these operators, and therefore in this case we solve a problem which has been open for nearly forty years.
\end{abstract}

\tableofcontents


\section{Introduction}

One of the most fundamental problems in finite and infinite dimensional complex analysis is the solvability of the inhomogeneous Cauchy-Riemann equation, or the following $\overline{\partial}$ equation in various (complex) spaces:
\begin{eqnarray}\label{d-bar equation}
\overline{\partial}u=f.
\end{eqnarray}
Typically, the equation \eqref{d-bar equation} is an over-determined system of partial differential equations with constant coefficients except for the very special case of one (complex) space dimension.

There are extensive works addressing to the equation \eqref{d-bar equation} in finite dimensions. Early studies on or related to this topic can be found in Weyl \cite{Weyl}, Kodaira \cite{Kodaira53}, Garabedian and Spencer \cite{Gar-Spencer}, Kohn and Spencer \cite{Koh-Spencer}, Morrey \cite{Mor}, Kohn \cite{Koh, Koh2}, H\"ormander \cite{Hor65} and so on. Further results can be found in Berndtsson \cite{Berndtsson}, Folland and Kohn \cite{Folland-Kohn}, Greiner and Stein \cite{Greiner-Stein},  H\"ormander \cite{Hor90}, Lieb and Michel \cite{Lieb-Michel}, Ohsawa \cite{Ohsawa}, Straube \cite{Straube} and rich refernces cited therein.

A basic approach to solve the $\overline{\partial}$ equation \eqref{d-bar equation} in finite dimensions is to derive {\it a priori} $L^2$ estimates, i.e., suitable integral  type estimates in some $L^2$ spaces. It originated from the classical Fredholm alternative and Carleman estimate. A rudiment of such an approach is available in Kodaira \cite{Kodaira53}. Then, this approach was systematically developed by Morrey \cite{Mor}, Kohn \cite{Koh},
H\"omander \cite{Hor65}, and others. A standard reference on this topic is H\"ormander \cite{Hor90} (See also two recent very interesting books Ohsawa \cite{Ohsawa} and Straube \cite{Straube}). It is notable that, $L^2$ estimates for the $\overline{\partial}$ operator may be applied to solve many problems in complex analysis of several variables, complex geometry and algebraic geometry (e.g., Diederich and Ohsawa \cite{Diederich-Ohsawa}, Donnelly and Fefferman \cite{Donnelly-Fefferman}, Demailly \cite{Demailly06, Demailly}, Guan and Zhou \cite{Guan-Zhou}, Siu \cite{Siu1974, Siu1998}, just to mention a few).

Starting from the last sixties, infinite dimensional complex analysis is a rapid developing field, in which one can find many works, for example, the early survey by Nachbin \cite{Nachbin}, the monographes by Colombeau \cite{Col}, Dineen \cite{Din99}, Herv\'e \cite{Herve}, Mazet \cite{Maz}, Mujica \cite{Mujica} and Noverraz \cite{Noverraz}, and the rich references cited therein.  Around 2000, Lempert revisited this field and made significant progresses on both Dolbeault complex and plurisubharmonic domination in infinite dimensions in a series of important works \cite{Lem98, Lem99, Lem00, Lem03}.

Also, there exist some studies on the solvability of the $\overline{\partial}$ equation \eqref{d-bar equation} in infinite dimensions. Three representative works are as follows:
\begin{itemize}
\item As far as we know, Henrich \cite{Hen} (1973) is the first paper on this topic, in which the existence of solutions of $(\ref{d-bar equation})$ under some polynomial
growth conditions was proved. More precisely, Henrich \cite[Theorem 1.4.1]{Hen} showed that, if $B$ and $H$ are respectively Banach space and Hilbert space so that $H\subset B$ and $(H,B)$ is an abstract Wiener space (introduced by Gross \cite{Gro65}), and $f$ is a closed $(0,t)$-form ($t\in \mathbb{N}$ ) on $B$ with polynomial growth and bounded Fr\'{e}chet derivative on bounded subsets of $B$, then it is possible to find a $(0,t-1)$-form $u$ solving the $\overline{\partial}$ equation \eqref{d-bar equation} on $H$.

\item In 1979, Raboin \cite{Rab79} proved the solvability of $(\ref{d-bar equation})$ in a separable Hilbert space $H$, in which $f$ is a smooth $(0,1)$-form; while its solution $u$ is restricted on a very special subspace rather than any open subset of $H$ so that the (translation) quasi-invariance of the underlying Gaussian measure can be employed there (See also Colombeau \cite{Col} and Soraggi \cite{Sor} for some further results by Raboin's method). Note that, all solutions found in the works \cite{Hen, Rab79, Col, Sor} are only defined in some smaller subspaces than that for the inhomogeneous term $f$.

\item About twenty years later, in 1999, a breakthrough was made by Lempert, who showed in \cite[Theorem 1.1]{Lem99} that, if $f$ is a $\overline{\partial}$ closed $(0,1)$-form which is Lipschitz continuous on any proper sub-open balls of an open ball $B(R)$ (centered at $0$) with radius $R\leq \infty$, then $(\ref{d-bar equation})$ admits a continuously differentiable solution $u$ on $B(R)$ (See \cite{Lem04} for further results in this respect). To the best of our knowledge, this is the first work in infinite dimensional Banach spaces, for which the inhomogeneous term $f$ and the solution $u$ (in the $\overline{\partial}$ equation \eqref{d-bar equation}) are defined in the same space.
\end{itemize}

Now, although more than twenty years have passed after Lempert's work \cite{Lem99}, very little more has been known about the solvability of $(\ref{d-bar equation})$. The situation is actually very similar as Lempert remarked at the very beginning of \cite{Lem99} that, ``{\sl up to now not a single Banach space and an open subset therein have been proposed where the equation $(\ref{d-bar equation})$ could be proved to be solvable under reasonably general conditions on $f$}". Here, we  mention the following two aspects:
\begin{itemize}
\item
In general, it seems that the spaces of continuous functions are not the best working spaces for solving the equation $(\ref{d-bar equation})$. Indeed, Coeur\'{e} gave a counterexample (appeared in \cite{Maz}) of $(0,1)$-form $f$ of the class $C^1$ on $\ell^2$ for which the equation $(\ref{d-bar equation})$ has no (continuous) solution on any nonempty open sets. Then, Lempert in \cite[Theorem 9.1]{Lem99} extended Coeur\'{e}'s counterexample to that on $\ell^p$ for any $p\in \mathbb{N}$, i.e., a $(0,1)$-form $f$ of the class $C^{p-1}$ on $\ell^p$ for which $(\ref{d-bar equation})$ has no solution on any nonempty open sets. Further, Payti \cite{Pat} constructs a Banach space $X$ and a $(0,1)$-form of class $C^{\infty}$ on $X$ such that $(\ref{d-bar equation})$ does not admit any local solution around $0$.

\item No solvability result is published for the equation $(\ref{d-bar equation})$ in infinite dimensions with $f$ being a general $(s,t)$-form (for any given nonnegative integers $s$ and $t$). In 1982, Colombeau said at \cite[p. 430]{Col} that, ``{\it the case of $(s,t)$-forms with $t>1$ remains unsolved}". Now, nearly forty years have passed but the situation does not change too much. Indeed, many works after Colombeau concerned at most the case of $(0,t)$-forms, but there is no work addressing the general case of $(s,t)$-forms with $s>0$.
\end{itemize}

The main purpose in this series of work is to develop $L^2$ estimates to solve the $\overline{\partial}$ equation \eqref{d-bar equation} in various infinite dimensional spaces. Roughly speaking, we shall assume only the closedness and the $L^2$ integrality of the inhomogeneous term $f$. It deserve mentioning that we do not need any continuity condition on $f$.   Also, $f$ may be a general $(s,t)$-form. Moreover, all of the solutions $u$ in \eqref{d-bar equation}  are defined in the same space as that for $f$.

Because of the great success of the classical $L^2$ estimates in finite dimensions, it is quite nature to expect that such sort of estimates can be extended to the infinite dimensional setting, but this is actually a longstanding unsolved problem, due to the essential difficulty that there is no nontrivial translation invariance measure in infinite dimensions. Indeed, it seems that Raboin's original purpose in \cite{Rab79} was exactly to try to do this but finally he failed, and therefore, as mentioned before, the solution $u$ (of \eqref{d-bar equation}) that he found is defined in some space which is smaller than that for $f$. After Raboin's work, though more than forty years have passed, the
situation does not change before this work.

Another essential difficulty in infinite dimensions is how to treat the  differential (especially the higher order differential) of a (possibly vector-valued) function. In this respect, the most popular notions are  Fr\'echet's derivative and G\^ateaux's derivative, which coincide whenever one of them is continuous. However, higher order (including the second order) Fr\'echet's derivatives are multilinear mappings, which are usually not easy to handle analytically. For example, it is well-known that, for any given
infinite dimensional Hilbert space $H$, although ${\cal L}(H)$, the space of all linear bounded operators on $H$ (which is isomorphic to the space of all bounded bilinear functionals on $H$), is still a Banach space, it is neither
reflexive (needless to say to be a Hilbert space) nor separable anymore even if $H$ itself is
separable. Because of this, it may be quite difficult to handle some analytic problems related to ${\cal L}(H)$ (e.g., L\"u and Zhang \cite{Lu-Zhang}).

 In this paper, we shall focus on the simplest case, i.e., $L^2$ estimates and existence theorems for the $\overline{\partial}$ equation \eqref{d-bar equation} on the whole space of $\ell^p$, for $p\in [1,\infty)$. The main contributions in this paper are as follows:
 \begin{itemize}
\item
 For each nonnegative integers $s$ and $t$, we  introduce a suitable Hilbert space, i.e., $L^2_{(s,t)}(\ell^p)$ for $(s,t)$-forms on $\ell^p$ (See Subsection \ref{subsec2.5}), which is our working space. Denote by ${\cal L}_{(s,t)}(\ell^p)$ the space of all bounded $s+t$-multilinear functional-valued, continuous and bounded functions on $\ell^p$, in which $s$ is {\it the order of $\mathbb{C}$-linearity} and $t$ is {\it the order of conjugate $\mathbb{C}$-linearity}. Clearly, $L^2_{(s,t)}(\ell^p)$ can be regarded as a weak version of ${\cal L}_{(s,t)}(\ell^p)$ (This point is quite similarly to the case of the usual $L^2(\mathbb{R})$ and the space $C_b(\mathbb{R})$ of all continuous and bounded functions on $\mathbb{R}$). Nevertheless, the main difference between $L^2_{(s,t)}(\ell^p)$ and ${\cal L}_{(s,t)}(\ell^p)$ is that the former is a Hilbert space while the later is only a Banach space (again, neither
reflexive nor separable anymore, whenever $s+t>0$). Clearly, our definitions for $(s,t)$-forms are natural generalization of that for differential forms in finite dimensions; they are quite convenient for computation and studying some analysis problems in infinite dimensional spaces, and hence, we believe that they have some independent interest and may be applied in other places.

\item Then, similarly to the setting in finite dimensions, we define the $\overline{\partial}$ operator from  $(s,t)$-forms on $\ell^p$ into $(s,t+1)$-forms on $\ell^p$ (See \eqref{d-dar-f-defi}), in which we use only the usual partial derivatives in the weak sense, and therefore, Fr\'echet's derivative or G\^ateaux's derivative is completely avoided in our definition.

\item Further, we obtain a key dimension-free $L^2$ estimates for solutions of a family of $\overline{\partial}$ equations in finite dimensions (See Corollary \ref{200208t2}), and via which we obtain a full characterization on the solvability  of the $\overline{\partial}$ equation \eqref{d-bar equation} on $\ell^p$, in the $L^2$-level (See Corollary \ref{uniqueness theorem}). Indeed, the counterexamples constructed by Coeur\'{e} on $\ell^2$ and by Lempert on $\ell^p$ (for $p\in\mathbb{N}$), that we mentioned before, are solvable for the $\overline{\partial}$ equation in the sense of this paper (See Example \ref{xmple1} for more details). On the other hand, since our results hold for any $(s,t)$-forms on $\ell^p$, in this case we solve the aforementioned, longstanding open problem posed at \cite[p. 430]{Col}.
\end{itemize}

Extension of our approach to more general setting, say the $\overline{\partial}$ operators in pseudo-convex domains in infinite dimensions or in infinite dimensional manifolds, as well as more applications, will be given in our forthcoming works.

To end this introduction, let us mention a very interesting correspondence between complex analysis and (mathematical) control theory (e.g., Li and Yong \cite{Li-Yong}, Zhang \cite{Zhangxu} and Zuazua \cite{Zu1}), i.e., the $\overline{\partial}$ equations correspond to the controllability problems, and the $L^2$ estimates to the observability estimates (See Remark \ref{remark2-9-21} for more details).

\section{Preliminaries}
\subsection{Some facts from functional analysis}
In this subsection, we shall collect some facts on unbounded operators on Hilbert spaces, which will play some key roles on establishing the existence of  solutions to the $\overline{\partial}$ equations.

Let $H_1$ and $H_2$ be two complex Hilbert spaces and let $T$ be a densely defined, linear operator from $H_1$ into $H_2$. Denote by $D_T$ the domain of $T$, and write $$R_T\triangleq\{Tx:x\in D_T\},\quad N_T\triangleq\{x\in D_T:Tx=0\},
$$
which are the range and the kernel of $T$, respectively.


In the rest of this subsection, we assume that $T$ is a linear, closed and densely defined operator from $H_1$ into $H_2$. Then, $T^*$ is also a linear, closed and densely defined operator from $H_2$ into $H_1$, and $T^{**}=T$ (e.g., \cite[Section 4.2]{Kra}).

By \cite[Lemma 4.1.1]{Hor90}  and the proof of \cite[Lemma 3.3]{SlU}, it is easy to show the following result:
\begin{lemma}\label{lower bounded lemma}
Let $F$ be a closed subspace of $H_2$ and $R_T\subset F$. Then $F=R_T$ if and only if for some constant $C>0$,
\begin{eqnarray}\label{lower bounded}
||g||_{H_2}\leq C||T^*g||_{H_1},\quad \forall\; g\in D_{T^*}\cap F.
\end{eqnarray}
In this case, for any $f\in F$, there is a unique $u\in D_T\cap N_T^\bot$ such that
\begin{eqnarray}\label{abs equa}
Tu=f,
\end{eqnarray}
and
 \begin{eqnarray}\label{lower bounded1}
 ||u||_{H_1}\le C||f||_{H_2},
 \end{eqnarray}
where the constant $C$ is the same as that in \eqref{lower bounded}.
\end{lemma}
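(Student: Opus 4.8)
The plan is to reduce the statement to a standard functional-analytic fact about closed range operators. First I would recall the classical closed range criterion: for a closed, densely defined operator $T:H_1\to H_2$, the range $R_T$ is closed if and only if $T^*$ restricted to $D_{T^*}\cap R_T$ (equivalently $D_{T^*}\cap\overline{R_T}$) is bounded below. This is precisely \cite[Lemma 4.1.1]{Hor90}. So I would begin by proving the equivalence ``$F=R_T$ $\iff$ \eqref{lower bounded}'' by splitting into the two directions.

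For the direction ``$F=R_T$ implies \eqref{lower bounded}'': assuming $R_T=F$ is closed, I would invoke the closed graph theorem. Consider the operator $T$ regarded as a map onto the closed (hence Banach) subspace $R_T=F$, and note that it induces a bijective bounded-below-type correspondence between $N_T^\perp\cap D_T$ and $F$. More concretely, $T^*:D_{T^*}\to H_1$ has $\overline{R_{T^*}}=N_T^\perp$ and $N_{T^*}=R_T^\perp$. Restricting to $g\in D_{T^*}\cap F$, the component of $g$ in $N_{T^*}=F^\perp\cap\cdots$ is killed, and one shows $g\mapsto T^*g$ is injective on $D_{T^*}\cap F$ with closed range; then the inverse is closed and everywhere defined on a Banach space, hence bounded by the closed graph theorem, giving \eqref{lower bounded}. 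For the converse direction, assuming \eqref{lower bounded}, I would take any $f\in F$ and use the Hahn–Banach / Riesz representation trick: define the linear functional $T^*g\mapsto\langle g,f\rangle_{H_2}$ on the subspace $\{T^*g:g\in D_{T^*}\cap F\}\subset H_1$, check it is well-defined and bounded with norm $\le C\|f\|_{H_2}$ using \eqref{lower bounded}, extend by Riesz to get $u\in H_1$ with $\|u\|_{H_1}\le C\|f\|_{H_2}$ and $\langle g,f\rangle=\langle T^*g,u\rangle$ for all $g\in D_{T^*}\cap F$; for $g\in D_{T^*}\cap F^\perp\subset N_{T^*}$ both sides vanish, so the identity holds for all $g\in D_{T^*}$, which by definition of the adjoint means $u\in D_{T^{**}}=D_T$ and $Tu=f$. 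Hence $f\in R_T$, so $F\subset R_T\subset F$, i.e. $F=R_T$, and simultaneously this produces the solution with estimate \eqref{lower bounded1}.

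For the uniqueness and the $N_T^\perp$ normalization in the ``in this case'' part: given the solution $u$ just constructed, replace it by its orthogonal projection onto $N_T^\perp$; since $N_T\subset D_T$ and $T$ vanishes on $N_T$, this projection still lies in $D_T$, still solves $Tu=f$, and has norm no larger, so \eqref{lower bounded1} persists. Uniqueness in $D_T\cap N_T^\perp$ is immediate: the difference of two such solutions lies in $N_T\cap N_T^\perp=\{0\}$. This is also where one invokes the cited proof of \cite[Lemma 3.3]{SlU} to handle the bookkeeping that the same constant $C$ works throughout.

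The main obstacle, such as it is, is purely a matter of care rather than depth: one must track the three closed subspaces $\overline{R_T}$, $N_T^\perp=\overline{R_{T^*}}$, and $N_{T^*}=R_T^\perp$ correctly, in particular the orthogonal decomposition $D_{T^*}\ni g = g_1\oplus g_2$ with $g_1\in F$-part and $g_2\in F^\perp\subset N_{T^*}$, and verify that the well-definedness of the functional $T^*g\mapsto\langle g,f\rangle$ genuinely uses $f\in F$ (so that $\langle g_2,f\rangle=0$). No translation-invariant measure or infinite-dimensional input is needed here — this lemma is the abstract engine, and the infinite-dimensional difficulties enter only later when verifying hypothesis \eqref{lower bounded} for the concrete $\overline{\partial}$ operators on $L^2_{(s,t)}(\ell^p)$.
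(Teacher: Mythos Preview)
Your proposal is correct and follows essentially the same route as the paper: the paper simply cites \cite[Lemma 4.1.1]{Hor90} for the equivalence and \cite[Lemma 3.3]{SlU} for the existence with the estimate, and your Riesz-representation/Hahn--Banach construction together with the projection onto $N_T^\perp$ is precisely the standard content behind those citations; your uniqueness argument via $N_T\cap N_T^\perp=\{0\}$ is verbatim the paper's. The only point worth tightening is the well-definedness of the functional $T^*g\mapsto\langle g,f\rangle$ on $\{T^*g:g\in D_{T^*}\cap F\}$, which follows directly from \eqref{lower bounded} (injectivity of $T^*$ on $D_{T^*}\cap F$) rather than from any decomposition.
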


\begin{proof}
The first assertion (for the characterization of $F=R_T$) is given by \cite[Lemma 4.1.1]{Hor90}.  Similarly to the proof of \cite[Lemma 3.3]{SlU}, one can prove the second assertion except the uniqueness. If there is another $u'\in D_T\cap N_T^\bot$ such that $Tu'=f$, then $T(u-u')=Tu-Tu'=0$, and therefore $u-u'\in N_T\cap N_T^\bot=\{0\}$, which yields $u=u'$, and hence, the desired uniqueness follows. This completes the proof of Lemma \ref{lower bounded lemma}.
\end{proof}

\begin{remark}\label{estimation rem}
Of course, generally speaking, the solutions of \eqref{abs equa} are NOT unique. Indeed, the set of solutions of \eqref{abs equa} is as follows:
$$
U\equiv\{u+v:\,v\in N_T\},
$$
where $u$ is the unique solution (in the space $D_T\cap N_T^\bot$) to this equation (found in Lemma \ref{lower bounded lemma}). It is obvious that $u\bot N_T$.
\end{remark}

Similarly to \cite[Lemma 3.3]{SlU}, as a consequence of Lemma \ref{lower bounded lemma}, we have the following result.

\begin{corollary}\label{estimation lemma}
Let $S$ be a linear, closed and densely defined operator from $H_2$ to another Hilbert space $H_3$, and $R_T\subset N_S$. Then,

{\rm 1)} $R_T=N_S$ if there exists a constant $C>0$ such that
\begin{eqnarray}\label{estimation lower bounded}
||g||_{H_2}\leq C\sqrt{||T^*g||_{H_1}^2+||Sg||_{H_3}^2},\quad \forall\; g\in D_{T^*}\cap D_S;
\end{eqnarray}

{\rm 2)} $R_T=N_S$ if and only if for some constant $C>0$,
\begin{eqnarray}\label{weakestimation lower bounded}
||g||_{H_2}\leq C||T^*g||_{H_1},\quad \forall\; g\in D_{T^*}\cap N_S.
\end{eqnarray}

In each of the above two cases, for any $f\in N_S$, there exists a unique $u\in D_T\cap N_T^\bot$  such that $Tu=f$ and
\begin{eqnarray}\label{Solution estation}
||u||_{H_1}\leq C||f||_{H_2},
 \end{eqnarray}
where the constant $C$ is the same as that in \eqref{estimation lower bounded} (or \eqref{weakestimation lower bounded}).
\end{corollary}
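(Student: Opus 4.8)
The plan is to derive the whole statement from Lemma \ref{lower bounded lemma} by choosing $F = N_S$, so that essentially no new analytic input is needed beyond that lemma. First I would check that $N_S$ is a closed subspace of $H_2$: since $S$ is closed, its graph is closed in $H_2\times H_3$, and if $x_n\to x$ in $H_2$ with $x_n\in N_S$, then $(x_n,Sx_n)=(x_n,0)\to(x,0)$ forces $(x,0)$ to lie in the graph of $S$, i.e. $x\in D_S$ and $Sx=0$; hence $x\in N_S$. Together with the standing hypothesis $R_T\subset N_S$, this places us exactly in the setting of Lemma \ref{lower bounded lemma} with $F=N_S$, and moreover $D_{T^*}\cap F = D_{T^*}\cap N_S$.

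For part 2) there is then nothing to do but to read off Lemma \ref{lower bounded lemma}: $R_T = N_S$ if and only if there is a constant $C>0$ with $\|g\|_{H_2}\le C\|T^*g\|_{H_1}$ for all $g\in D_{T^*}\cap N_S$, which is precisely \eqref{weakestimation lower bounded}. For part 1) the observation is that \eqref{estimation lower bounded} is a formally stronger hypothesis whose restriction to $g\in D_{T^*}\cap N_S$ already yields \eqref{weakestimation lower bounded}: any such $g$ lies in $D_{T^*}\cap D_S$ (because $N_S\subset D_S$) and satisfies $Sg=0$, so the right-hand side of \eqref{estimation lower bounded} collapses to $C\|T^*g\|_{H_1}$. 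Thus \eqref{estimation lower bounded} implies \eqref{weakestimation lower bounded} with the same constant $C$, and part 2) gives $R_T = N_S$.

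For the solvability claim, once $R_T = N_S$ is known I would invoke the second half of Lemma \ref{lower bounded lemma} with $F = N_S$: for every $f\in N_S$ there is a unique $u\in D_T\cap N_T^\perp$ solving $Tu=f$, with $\|u\|_{H_1}\le C\|f\|_{H_2}$ and $C$ the constant of the lower bound actually used — namely the one in \eqref{weakestimation lower bounded} in case 2), and, by the reduction just described, the one in \eqref{estimation lower bounded} in case 1), since that same constant serves in \eqref{weakestimation lower bounded}. This matches \eqref{Solution estation}.

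I do not expect any genuine obstacle here: all the real content sits in Lemma \ref{lower bounded lemma} (itself resting on \cite[Lemma 4.1.1]{Hor90}), and the corollary is a matter of packaging. The only points requiring (routine) care are the closedness of $N_S$ and the bookkeeping of constants, so that the constant quoted in \eqref{Solution estation} is literally the one appearing in whichever of \eqref{estimation lower bounded} or \eqref{weakestimation lower bounded} is assumed.
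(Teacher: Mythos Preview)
Your proposal is correct and follows exactly the route the paper intends: the paper simply states that the corollary is a consequence of Lemma~\ref{lower bounded lemma} (with $F=N_S$), and your argument spells out precisely that reduction, including the closedness of $N_S$ and the observation that \eqref{estimation lower bounded} restricted to $D_{T^*}\cap N_S$ collapses to \eqref{weakestimation lower bounded} with the same constant.
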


\begin{remark}\label{remark2-9-1}
It seems that the estimate \eqref{estimation lower bounded} is equivalent to \eqref{weakestimation lower bounded}, i.e., \eqref{estimation lower bounded} is also a necessary condition for $R_T=N_S$ but, as far as we know, this point is still unclear.
\end{remark}

\begin{remark}\label{remark2-9-21}
Obviously, the point of Lemma \ref{lower bounded lemma} and Corollary \ref{estimation lemma} is to reduce the solvability of the underlying equation problem to a suitable {\it a priori} estimate for its dual problem. Such sort of reductions are frequently used in other mathematical branches. For example, in control theory, people usually reduce the controllability problem (which can be viewed as an equation problem in which both the state and the control variables are unknowns) to a suitable observability estimate for its dual problem. Clearly, Lemma \ref{lower bounded lemma} is very close to Li and Yong \cite[Lemma 2.4 of Chaper 7, p. 282]{Li-Yong}. On the other hand, many observability estimates for concrete problems (especially that for deterministic/stochastic partial differential equations) can be viewed as some sort of $L^2$ estimates; moreover, a basic tool to establish observability estimates is also to use Carleman type estimates (e.g., Zhang \cite{Zhangxu} and Zuazua \cite{Zu1}).
\end{remark}

\subsection{A Borel probability measure on $\ell^p$}\label{subsect2.2}
In this subsection, for any $p\in [1,\infty)$, we will introduce a Borel probability measure on $\ell^p$ (to be defined later). It is a sort of Gaussian measures on Banach spaces, but here we adopt the method of product measures (e.g., \cite[Chapter 1]{Da}) instead of that of abstract Wiener spaces (See \cite{Gro65}), so that the involved computation in the sequel are made more friendly.

If $(G,\cal J)$ is a topological space (on a given
nonempty set $G$), then the smallest $\sigma$-algebra
generated by all open sets in $\cal J$ is called the
Borel $\sigma$-algebra of
$G$, denoted by $\mathscr{B}(G)$. For any given $a>0$, we define a probability measure $\bn_a$ in $(\mathbb{R}^2,\mathscr{B}(\mathbb{R}^2))$ as follows:
\begin{eqnarray*}
\bn_a(B)\triangleq \frac{1}{2\pi a^2}\int_{B}e^{-\frac{x^2+y^2}{2a^2}}\,\mathrm{d}x\mathrm{d}y,\quad\,\forall\;B\in \mathscr{B}(\mathbb{R}^2).
\end{eqnarray*}
Since $\mathbb{C}$ can be identified with $\mathbb{R}^2$, $\bn_a$ is also a probability measure in $(\mathbb{C} ,\mathscr{B}(\mathbb{C} ))$.

In the rest of this paper, we will fix a sequence $\{a_i\}_{i=1}^{\infty}$ of positive numbers  such that
\begin{eqnarray*}
\sum_{i=1}^{\infty} a_i <1.
\end{eqnarray*}
For each $n\in\mathbb{N}$, we define a probability measure $\bn^n$ in $(\mathbb{C}^n,\mathscr{B}(\mathbb{C}^n))$ by setting
\begin{eqnarray}\label{def of N^n}
\bn^n\triangleq \prod_{i=1}^{n}\bn_{a_i}.
\end{eqnarray}
Now we want to define a product measure on the space $\mathbb{C}^{\infty}=\prod\limits_{i=1}^{\infty}\mathbb{C}$ which can be identified with $(\mathbb{R}^2)^{\infty}=\prod\limits_{i=1}^{\infty}\mathbb{R}^2$. We endow $\mathbb{C}^{\infty}$ with the usual product topology.  By the discussion in \cite[Section 1.5]{Da}, $\mathscr{B}(\mathbb{C}^{\infty})$ is precisely the product $\sigma$-algebra  generated by the following family of sets (in $\mathbb{C}^{\infty}$):
\begin{eqnarray*}
A_1\times A_2\times \cdots \times A_k\times \Bigg(\prod_{l=k+1}^{\infty}\mathbb{C}\Bigg),
\end{eqnarray*}
where $A_i\in \mathscr{B}(\mathbb{C}),\,1\leq i\leq k, \, k\in \mathbb{N}$.
Let
\begin{eqnarray*}
\bn\triangleq\prod_{i=1}^{\infty}\bn_{a_i}
\end{eqnarray*}
be the product measure on $(\mathbb{C}^{\infty},\mathscr{B}(\mathbb{C}^{\infty}))$. This is the very measure that we will use later.

For $p\in[1,\infty)$, let
\begin{eqnarray*}
\ell^p\triangleq \bigg\{(x_n+\sqrt{-1}y_n)\in \mathbb{C}^{\infty}:x_n,y_n\in\mathbb{R},\,n\in\mathbb{N},\,\sum_{n=1}^{\infty}(|x_n|^p+|y_n|^p)<\infty\bigg\}.
\end{eqnarray*}
Also, let
\begin{eqnarray*}
\ell^{\infty}\triangleq \bigg\{(x_n+\sqrt{-1}y_n)\in \mathbb{C}^{\infty}:x_n,y_n\in\mathbb{R},\,n\in\mathbb{N},\,\sup_{1\leq n<\infty}\max\{|x_n|,|y_n|\}<\infty\bigg\}.
\end{eqnarray*}
Then $\ell^p$ ($p\in[1,\infty)$) is a complex separable Banach space. Just as \cite[Exercise 1.10]{Da}, one can show that every closed ball in $\ell^p$ lies in $\mathscr{B}(\mathbb{C}^{\infty})$ and
$\mathscr{B}(\ell^p)=\{A\cap\ell^p:A\in \mathscr{B}(\mathbb{C}^{\infty})\}$ for $p\in[1,\infty)$ and $\ell^{\infty}\in \mathscr{B}(\mathbb{C}^{\infty}).$

We shall need the following simple but useful result:

\begin{proposition}\label{lem1}
$\bn(\ell^p)=1$ for all $p\in[1,\infty]$.
\end{proposition}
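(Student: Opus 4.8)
The plan is to show $\bn(\ell^p)=1$ by a Borel--Cantelli type argument applied to the tail sums of the coordinates, exploiting that the chosen variances $a_i$ are summable. First I would reduce everything to the case $p\in[1,\infty)$ since $\ell^p\subset\ell^\infty$ for such $p$, so $\bn(\ell^p)=1$ immediately forces $\bn(\ell^\infty)=1$; thus the real content is the finite-$p$ statement. For $p\in[1,\infty)$, I would introduce, on the probability space $(\mathbb{C}^\infty,\mathscr{B}(\mathbb{C}^\infty),\bn)$, the coordinate random variables $z_i=x_i+\sqrt{-1}\,y_i$, which under $\bn$ are independent with $z_i$ distributed as $\bn_{a_i}$, i.e. $x_i,y_i$ are independent real Gaussians of mean $0$ and variance $a_i^2$. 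The set $\ell^p$ is exactly $\{\sum_{i=1}^\infty(|x_i|^p+|y_i|^p)<\infty\}$, which is measurable (a countable combination of coordinate functions), so it suffices to prove that this series converges $\bn$-a.s.

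The key computation is the expectation of each term: $\mathbb{E}_{\bn}[|x_i|^p]=c_p\,a_i^p$ and likewise $\mathbb{E}_{\bn}[|y_i|^p]=c_p\,a_i^p$, where $c_p=\mathbb{E}[|N|^p]<\infty$ is the $p$-th absolute moment of a standard normal $N$ (finite for every $p\in[1,\infty)$, with an explicit Gamma-function value, though the explicit form is not needed). Hence
\begin{eqnarray*}
\mathbb{E}_{\bn}\Bigg[\sum_{i=1}^{\infty}\big(|x_i|^p+|y_i|^p\big)\Bigg]=2c_p\sum_{i=1}^{\infty}a_i^p.
\end{eqnarray*}
Since $\sum_{i=1}^\infty a_i<1$, in particular $a_i\to 0$, so $a_i<1$ for all $i$, whence $a_i^p\le a_i$ when $p\ge 1$, and therefore $\sum_{i=1}^\infty a_i^p\le \sum_{i=1}^\infty a_i<1<\infty$. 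Thus the nonnegative random variable $\sum_i(|x_i|^p+|y_i|^p)$ has finite expectation, hence is finite $\bn$-almost surely (by Tonelli's theorem one may interchange sum and integral for the nonnegative integrand). This says precisely that $\bn$-almost every point of $\mathbb{C}^\infty$ lies in $\ell^p$, i.e. $\bn(\ell^p)=1$. Finally, as noted, $\ell^p\subset\ell^\infty$ for $p<\infty$ gives $\bn(\ell^\infty)\ge\bn(\ell^p)=1$, completing all cases $p\in[1,\infty]$.

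I do not anticipate a serious obstacle here; the only points requiring care are measurability bookkeeping and the moment estimate. For measurability I would invoke the fact, already recorded in the excerpt, that $\mathscr{B}(\ell^p)=\{A\cap\ell^p:A\in\mathscr{B}(\mathbb{C}^\infty)\}$ and that $\ell^p\in\mathscr{B}(\mathbb{C}^\infty)$ (so that $\bn(\ell^p)$ is well defined), and to justify the latter one writes $\ell^p=\bigcap_{M\in\mathbb{N}}\bigcup_{k\in\mathbb{N}}\bigcap_{n\in\mathbb{N}}\{\sum_{i=1}^n(|x_i|^p+|y_i|^p)\le k-1/M\}$ or more simply observes that $w\mapsto\sum_{i=1}^\infty(|x_i|^p+|y_i|^p)$ is a pointwise-increasing limit of continuous (hence Borel) functions on $\mathbb{C}^\infty$ and $\ell^p$ is the set where this limit is finite. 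The finiteness $c_p<\infty$ is the standard Gaussian moment bound. Everything else is the elementary inequality $a_i^p\le a_i$ for $a_i\in(0,1)$ and $p\ge 1$ together with the hypothesis $\sum a_i<1$, so the proof is short.
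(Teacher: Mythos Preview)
Your proposal is correct and follows essentially the same approach as the paper: compute the expectation of $\sum_i(|x_i|^p+|y_i|^p)$ via the Gaussian moment formula, use $a_i^p\le a_i$ to bound it by a constant times $\sum_i a_i<\infty$, and conclude almost-sure finiteness. The only cosmetic difference is that the paper handles $p=\infty$ by bounding $\sup_n\max\{|x_n|,|y_n|\}\le\sum_n(|x_n|+|y_n|)$ directly, whereas you invoke the inclusion $\ell^p\subset\ell^\infty$; both reduce to the $p=1$ case.
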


\begin{proof}
For any $p\in[1,\infty)$, similarly to the proof of \cite[Proposition 1.11]{Da}, we have
\begin{eqnarray*}
\int_{\mathbb{C}^{\infty}}\sum_{i=1}^{\infty}(|x_i|^p+|y_i|^p)\,\mathrm{d}\bn
&=&\sum_{i=1}^{\infty}\int_{\mathbb{C}^{\infty}}(|x_i|^p+|y_i|^p)\,\mathrm{d}\bn\\
&=&\frac{2^{\frac{p}{2}+1}\Gamma(\frac{1+p}{2})}{\sqrt{\pi}}\sum_{i=1}^{\infty}a_i^p\\
&\leq&\frac{2^{\frac{p}{2}+1}\Gamma(\frac{1+p}{2})}{\sqrt{\pi}}\sum_{i=1}^{\infty}a_i<\infty.
\end{eqnarray*}
Here and henceforth, $\Gamma(\cdot)$ stands for the usual $\Gamma$-function.

Also,
\begin{eqnarray*}
\int_{\mathbb{C}^{\infty}}\sup_{1\leq n<\infty}\max\{|x_n|,|y_n|\}\,\mathrm{d}\bn\leq \int_{\mathbb{C}^{\infty}}\sum_{n=1}^{\infty}(|x_n|+|y_n|)\,\mathrm{d}\bn<\infty.
\end{eqnarray*}
Hence, $\bn(\ell^p)=1$ for all $p\in[1,\infty]$. This competes the proof of Proposition \ref{lem1}.
\end{proof}

\begin{remark}
Proposition \ref{lem1} with $p=2$ was given in \cite[Proposition 1.11]{Da}. The case with $p=1$ was communicated privately to us by Haimeng Luo.
\end{remark}

Thanks to Proposition \ref{lem1}, for any fixed $p\in[1,\infty)$, we obtain a probability measure $P$ on $(\ell^p,\mathscr{B}(\ell^p))$ by setting
\begin{eqnarray*}
P(A)\triangleq \bn(A),\qquad\forall\,A\in \mathscr{B}(\ell^p).
\end{eqnarray*}
We denote by $L^2(\ell^p,P)$ the Hilbert space of all equivalence classes of square integrable complex-valued functions on $\ell^p$, endowed with the following inner product,
\begin{eqnarray*}
\langle f,g\rangle_{L^2(\ell^p,P)} =\int_{\ell^p}f\cdot\overline{g}\,\mathrm{d}P,\qquad\forall\,f,g\in L^2(\ell^p,P).
\end{eqnarray*}

The following result can be viewed as a special version of Fernique's theorem for abstract Wiener spaces (\cite{Fer}).
\begin{lemma}\label{Ferniqeu's theorem}
Let $p\in[1,\infty)$ and $\varphi\in (\ell^p)^*$. Then, for
any $\epsilon \le\frac{1}{||\varphi||_{(\ell^p)^*}}$ if $\varphi\not=0$ and $\epsilon<\infty $ if $\varphi=0$,
it holds that
\begin{eqnarray*}
\int_{\ell^p}e^{\epsilon |\varphi(\textbf{z})|}\,\mathrm{d}P(\textbf{z})\leq\int_{\ell^1}e^{ ||\textbf{z}||_{\ell^1}}\,\mathrm{d}P(\textbf{z})<\infty.
\end{eqnarray*}
\end{lemma}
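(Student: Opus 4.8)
The plan is to reduce the statement on a general $\varphi \in (\ell^p)^*$ to the concrete case $p=1$ with $\varphi$ the sum of the absolute values of coordinates, and then to evaluate the resulting one-dimensional Gaussian integrals explicitly via the product structure of $\bn$. First I would note that, by duality, $\varphi$ acts on $\textbf{z}=(z_n)\in\ell^p$ through a fixed element of the dual; for $p\in(1,\infty)$ this is $(w_n)\in\ell^q$ with $\frac1p+\frac1q=1$ so that $\varphi(\textbf{z})=\sum_n w_n z_n$, while for $p=1$ it is $(w_n)\in\ell^\infty$. In either case $|\varphi(\textbf{z})|\le \sum_n |w_n|\,|z_n|$ and $\sup_n|w_n|\le\|\varphi\|_{(\ell^p)^*}$; hence for $\epsilon\le 1/\|\varphi\|_{(\ell^p)^*}$ one has $\epsilon|\varphi(\textbf{z})|\le \sum_n |z_n| = \|\textbf{z}\|_{\ell^1}$ on the set $\ell^1$ (after rescaling $\epsilon|w_n|\le 1$). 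Since $P(\ell^1)=1$ by Proposition \ref{lem1} and $\bn=P$ as measures on the relevant Borel sets, the first inequality $\int_{\ell^p}e^{\epsilon|\varphi(\textbf{z})|}\,\mathrm{d}P \le \int_{\ell^1}e^{\|\textbf{z}\|_{\ell^1}}\,\mathrm{d}P$ follows by monotonicity of the integral together with the fact that both integrands are being integrated against the same underlying product measure on $\mathbb{C}^\infty$ (using $\bn(\ell^p)=\bn(\ell^1)=1$).

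The substantive part is the finiteness $\int_{\ell^1}e^{\|\textbf{z}\|_{\ell^1}}\,\mathrm{d}P(\textbf{z})<\infty$. Here I would write $\|\textbf{z}\|_{\ell^1}=\sum_{n=1}^\infty(|x_n|+|y_n|)$ and use the product structure $\bn=\prod_{i=1}^\infty \bn_{a_i}$ to factor the integral:
\begin{eqnarray*}
\int_{\mathbb{C}^\infty}e^{\sum_{n}(|x_n|+|y_n|)}\,\mathrm{d}\bn = \prod_{i=1}^\infty \int_{\mathbb{R}^2} e^{|x|+|y|}\,\mathrm{d}\bn_{a_i}(x,y) = \prod_{i=1}^\infty \left(\frac{1}{\sqrt{2\pi}\,a_i}\int_{\mathbb{R}} e^{|x|}e^{-x^2/(2a_i^2)}\,\mathrm{d}x\right)^2.
\end{eqnarray*}
The one-dimensional factor is computed by completing the square: $\frac{1}{\sqrt{2\pi}\,a}\int_{\mathbb{R}}e^{|x|-x^2/(2a^2)}\,\mathrm{d}x = e^{a^2/2}\big(\Phi(a)+\Phi(-a)e^{\text{correction}}\big)$ — more precisely one gets a value of the form $e^{a^2/2}\cdot 2\Phi_a$ where $0\le \Phi_a\le 1$ is a Gaussian tail, so the factor is bounded above by $2e^{a^2/2} \le 2e^{a_i/2}$ (using $a_i<1$, hence $a_i^2<a_i$), and is also bounded above by, say, $1 + C a_i$ for a universal constant $C$ once $a_i$ is small — which holds for all but finitely many $i$ since $\sum a_i<\infty$. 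Then $\prod_i(1+Ca_i)^2 \le \exp\!\big(2C\sum_i a_i\big)<\infty$, and folding in the finitely many remaining factors shows the whole product converges.

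The main obstacle is purely the bookkeeping in the second step: one must get an explicit-enough upper bound on the single-variable integral $\frac{1}{\sqrt{2\pi}a}\int_{\mathbb{R}}e^{|x|-x^2/(2a^2)}\mathrm{d}x$ to see that the infinite product converges, i.e., that this factor is $1+O(a)$ as $a\to 0$ rather than, say, $1+O(1)$. This is where the hypothesis $\sum_i a_i<1$ (in particular $\sum_i a_i<\infty$) is essential, exactly as it was in Proposition \ref{lem1}. I expect the cleanest route is to split $\int_{\mathbb{R}}e^{|x|}e^{-x^2/(2a^2)}\mathrm{d}x = \int_{|x|\le 1}+\int_{|x|>1}$; on $|x|\le 1$ bound $e^{|x|}\le e$ and the Gaussian integral by $\sqrt{2\pi}a$, giving contribution $\le e$ times the normalizing constant's reciprocal — wait, that is $O(1)$ not $O(a)$, so instead bound $e^{|x|}-1\le |x|e^{|x|}$ and estimate $\frac{1}{\sqrt{2\pi}a}\int_{\mathbb{R}}(|x|e^{|x|})e^{-x^2/(2a^2)}\mathrm{d}x = O(a)$ by scaling $x = a u$, which is the genuinely necessary computation. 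Once that bound is in hand the convergence of the product, and hence the lemma, is immediate.
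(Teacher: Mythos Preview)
Your proposal is correct and follows essentially the same route as the paper: reduce $\epsilon|\varphi(\textbf{z})|$ to $\|\textbf{z}\|_{\ell^1}$ (the paper does this in one line via $|\varphi(\textbf{z})|\le\|\varphi\|\,\|\textbf{z}\|_{\ell^p}\le\|\varphi\|\,\|\textbf{z}\|_{\ell^1}$ rather than through the explicit dual element), factor the resulting integral as an infinite product of one-dimensional Gaussian integrals, and show each factor is $1+O(a_i)$ so that $\sum a_i<\infty$ gives convergence. The only technical difference is in bounding the factor $\frac{1}{\sqrt{2\pi}a}\int_{\mathbb{R}}e^{|x|-x^2/(2a^2)}\,\mathrm{d}x$: the paper completes the square explicitly to get $e^{a^2/2}\big(1+\tfrac{2}{\sqrt{\pi}}\int_0^{a/\sqrt{2}}e^{-t^2}\,\mathrm{d}t\big)\le e^{a^2/2}(1+\sqrt{2/\pi}\,a)$ and then uses $\prod(1+u_k)\le e^{\sum u_k}$, whereas you reach $1+O(a)$ via the inequality $e^{|x|}-1\le|x|e^{|x|}$ and the substitution $x=au$---both work, but the paper's explicit bound avoids the detour you flagged yourself.
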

\begin{proof}
By $\varphi\in (\ell^p)^*$, for any $p\in[1,\infty)$,
$$
|\varphi(\textbf{z})|\le ||\varphi||_{(\ell^p)^*}||\textbf{z}||_{\ell^p}\le  ||\varphi||_{(\ell^p)^*}||\textbf{z}||_{\ell^1},\quad\forall\;\textbf{z}\in\mathbb{C}^{\infty}.
$$
Hence, by Proposition \ref{lem1}, for any $\epsilon \le\frac{1}{||\varphi||_{(\ell^p)^*}}$ if $\varphi\not=0$ and $\epsilon<\infty $ if $\varphi=0$,
\begin{eqnarray*}
&&\int_{\ell^p}e^{\epsilon |\varphi(\textbf{z})|}\,\mathrm{d}P(\textbf{z})=\int_{\ell^1}e^{\epsilon |\varphi(\textbf{z})|}\,\mathrm{d}P(\textbf{z})\\
&&\leq \int_{\ell^1}e^{ ||\textbf{z}||_{\ell^1}}\,\mathrm{d}P(\textbf{z})=\int_{\mathbb{C}^{\infty}}e^{ ||\textbf{z}||_{\ell^1}}\,\mathrm{d}\mathcal{N}(\textbf{z})
=\lim_{n\to\infty}\int_{\mathbb{C}^{\infty}}e^{ \sum\limits_{k=1}^{n}(|x_k|+|y_k|)}\,\mathrm{d}\mathcal{N}(\textbf{z}),
\end{eqnarray*}
where the last equality follows from the monotone convergence theorem. Note that
\begin{eqnarray*}
&&\int_{\mathbb{C}^{\infty}}e^{ \sum\limits_{k=1}^{n}(|x_k|+|y_k|)}\,\mathrm{d}\mathcal {N}(\textbf{z})
=\int_{\mathbb{C}^{n}}e^{ \sum\limits_{k=1}^{n}(|x_k|+|y_k|)}\,\mathrm{d}\mathcal {N}^n
=\prod_{k=1}^{n}\int_{\mathbb{C}}e^{ |x_k|+|y_k|)}\,\mathrm{d}\mathcal {N}_{a_k}(x_k,y_k)\\
&&=\prod_{k=1}^{n}\bigg(\frac{2}{\sqrt{2\pi a_k^2}}\int_0^{+\infty}e^{-\frac{(x-a_k^2)^2}{2a_k^2}+\frac{a_k^2}{2}}\,\mathrm{d}x\bigg)^2
= \prod_{k=1}^{n}\bigg[ e^{\frac{a_k^2}{2}}\cdot\bigg(1+\frac{2}{\sqrt{\pi}}\int_0^{\frac{a_k}{\sqrt{2}}}e^{-x^2}\,\mathrm{d}x\bigg)\bigg]^2\\
&&\leq\prod_{k=1}^{n}\bigg[ e^{\frac{a_k^2}{2}}\cdot\bigg(1+\frac{\sqrt{2}}{\sqrt{\pi}}\cdot a_k\bigg)\bigg]^2
=e^{\sum\limits_{k=1}^{n}a_k^2}\cdot \prod_{k=1}^{n}\bigg(1+\frac{\sqrt{2}}{\sqrt{\pi}}\cdot a_k\bigg)^2
\leq e^{\sum\limits_{k=1}^{n}\Big(a_k^2+\frac{2\sqrt{2}}{\sqrt{\pi}}\cdot a_k\Big)},
\end{eqnarray*}
where the last inequality follows the simple fact that for any positive integer $l$ and complex numbers $u_1,\cdots,u_l$, it holds that $\prod\limits_{k=1}^{l}(1+|u_k|)\leq e^{\sum\limits_{k=1}^{l}|u_k|}$ (e.g., \cite[Lemma 15.3]{Rud87}). Therefore,
\begin{eqnarray*}
\int_{\ell^p}e^{\epsilon |\varphi(\textbf{z})|}\,\mathrm{d}P(\textbf{z})&=&\lim_{n\to\infty}\int_{\mathbb{C}^{\infty}}e^{ \sum\limits_{k=1}^{n}(|x_k|+|y_k|)}\,\mathrm{d}\mathcal {N}(\textbf{z})\\
&\leq &\lim_{n\to\infty} e^{\sum\limits_{k=1}^{n}\Big(a_k^2+\frac{2\sqrt{2}}{\sqrt{\pi}}\cdot a_k\Big)}=e^{\sum\limits_{k=1}^{\infty}\Big(a_k^2+\frac{2\sqrt{2}}{\sqrt{\pi}}\cdot a_k\Big)}<\infty.
\end{eqnarray*}
This completes the proof of Lemma \ref{Ferniqeu's theorem}.
\end{proof}

\subsection{Some dense subsets of $L^2(\ell^p,P)$}
In order to define ``weak derivatives", we need some family of ``smooth" functions on $\ell^p$ ($p\in [1,\infty)$), which are dense in $L^2(\ell^p,P)$. We need to borrow some idea from the unpublished manuscript \cite{Dri}.

Throughout this subsection we will assume that $X$ is a real separable Banach space\footnote{Clearly, any complex Banach space is automatically a real Banach space.}, and denote by $X^*$ the (real) dual space of $X$. 

Denote by $C_c^{\infty}(\mathbb{R}^n)$ the set of all $C^{\infty}$ real-valued functions on $\mathbb{R}^n$ with compact support. We need to use the following two sets of functions later.
$$
 \begin{array}{ll}
\mathcal {F}C_c^{\infty}(X)\triangleq \{f=F(\varphi_1,\cdots,\varphi_n):\; n\in\mathbb{N}, F\in C_c^{\infty}(\mathbb{R}^n), \varphi_i\in X^*, 1\leq i\leq n\}.
\end{array}
$$
We denote by $\mathscr {F}C_c^{\infty}(X)$ the complex linear space spanned by $\mathcal {F}C_c^{\infty}(X)$.

We need the following elementary result:
\begin{lemma}\label{rectangle lemma}
Let $R=[a_1,b_1]\times\cdots\times [a_n,b_n]$ for some $a_i,b_i\in\mathbb{R}$ satisfying $a_i<b_i$ ($i=1,2,\cdots,n$).  Then there exists uniformly bounded $\{f_k\}_{k=1}^{\infty}\subset C_c^{\infty}(\mathbb{R}^n)$ such that $f_k\to \chi_{R}$ pointwise in $\mathbb{R}^n$ as $k\to\infty$, where $\chi_{R}$ stands for the characteristic function of the set $R$.
\end{lemma}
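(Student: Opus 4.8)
The plan is to build the approximating sequence dimension by dimension and then take a product, reducing everything to the one-dimensional case. First I would treat $n=1$: given $[a,b]$ with $a<b$, I want uniformly bounded $g_k\in C_c^\infty(\mathbb{R})$ with $g_k\to\chi_{[a,b]}$ pointwise. To do this, pick a standard mollifier $\rho_\epsilon\in C_c^\infty(\mathbb{R})$ (nonnegative, total mass $1$, supported in $(-\epsilon,\epsilon)$) and a slightly enlarged interval; for instance set $g_k\triangleq \chi_{[a-1/k,\,b+1/k]}*\rho_{1/(2k)}$. Each $g_k$ is $C^\infty$, compactly supported, takes values in $[0,1]$, equals $1$ on $[a-1/(2k),\,b+1/(2k)]\supset[a,b]$, and vanishes outside $[a-2/k,\,b+2/k]$. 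For a fixed $x$, if $x\in(a,b)$ then $g_k(x)=1$ for all large $k$; if $x\notin[a,b]$ then $g_k(x)=0$ for all large $k$; at the two endpoints $x=a$ or $x=b$ one has $g_k(x)\to$ some value in $[0,1]$, but since a finite set has measure zero and, more importantly for the statement, one can simply shift the convolution so that the mollified indicator is built from $\chi_{(a-1/k,\,b+1/k)}$ and observe $g_k(a),g_k(b)\to 1\neq\chi_{[a,b]}$ would be wrong — so instead I would arrange the plateau to shrink toward $[a,b]$ from inside for the ``vanishing'' behaviour and from outside for the ``equal to one'' behaviour is impossible simultaneously at the endpoints; the clean fix is that the lemma only asserts pointwise convergence, so I accept $g_k(a)\to 1=\chi_R(a)$ by making the support of the mollified indicator contain a neighbourhood of $[a,b]$, i.e.\ $g_k\to\chi_{[a,b]}$ pointwise everywhere including endpoints, which indeed holds for $g_k=\chi_{[a-1/k,b+1/k]}*\rho_{1/(3k)}$ since then $g_k\equiv 1$ on $[a,b]$ for every $k$ and $g_k(x)\to 0$ for $x\notin[a,b]$. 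So the endpoints cause no trouble with this choice.

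Second, for general $n$ I would take, for each coordinate $i$, the sequence $g_k^{(i)}\in C_c^\infty(\mathbb{R})$ produced above for the interval $[a_i,b_i]$, and set
\[
f_k(x_1,\dots,x_n)\triangleq \prod_{i=1}^n g_k^{(i)}(x_i).
\]
Then $f_k\in C_c^\infty(\mathbb{R}^n)$ as a finite product of such functions, its support is contained in the fixed compact set $\prod_i[a_i-1,b_i+1]$, and $\|f_k\|_\infty\le\prod_i\|g_k^{(i)}\|_\infty\le 1$, giving the required uniform bound. For pointwise convergence, fix $x=(x_1,\dots,x_n)$: if $x\in R$ then each $g_k^{(i)}(x_i)=1$ for all $k$ so $f_k(x)=1=\chi_R(x)$; if $x\notin R$ then $x_j\notin[a_j,b_j]$ for some $j$, so $g_k^{(j)}(x_j)\to 0$ while the other factors stay bounded by $1$, hence $f_k(x)\to 0=\chi_R(x)$. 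This establishes $f_k\to\chi_R$ pointwise on all of $\mathbb{R}^n$.

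There is essentially no serious obstacle here; the only point requiring a little care is the behaviour at the boundary of the rectangle, and the construction above sidesteps it by choosing the mollification so that $f_k$ is identically $1$ on $R$ (rather than merely close to $1$), so pointwise convergence holds at every point without invoking any ``almost everywhere'' escape clause. The routine verifications — that convolution of an $L^1$ indicator with a $C_c^\infty$ bump is $C_c^\infty$, that the supports and sup-norms behave as claimed, and that a product of $C_c^\infty$ functions of separate variables is $C_c^\infty$ — I would state briefly and leave to the reader, since they are completely standard.
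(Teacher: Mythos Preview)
Your argument is correct. Both you and the paper reduce to the one-dimensional case and use mollification, so the approaches are in the same spirit; however, your construction is more direct. The paper first approximates $\chi_{[a,b]}$ pointwise by continuous compactly supported functions $\varsigma_k$, then mollifies each $\varsigma_k$ to get $\varsigma_k^\epsilon\in C_c^\infty$, and finally invokes a diagonal argument to extract a single sequence. You instead mollify a slightly enlarged indicator $\chi_{[a-1/k,\,b+1/k]}*\rho_{1/(3k)}$ directly, which yields a sequence that is identically $1$ on $[a,b]$ from the outset; this sidesteps the diagonal argument entirely and makes the endpoint behaviour transparent. You also spell out the product construction for $n>1$ explicitly, whereas the paper simply asserts ``it suffices to consider the one dimensional case'' and leaves that step to the reader. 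Your route is slightly cleaner; the paper's route is marginally more flexible (it would adapt to shapes that are not products).
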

\begin{proof}
It suffices to consider the one dimensional case, i.e., $R=[a,b]$ for some $a,b\in\mathbb{R}$ satisfying $a<b$. Clearly, one can find a sequence of continuous and uniformly bounded  functions $\{\varsigma_k\}_{k=1}^\infty$ with compact supports such that $\varsigma_k \to \chi_{R}$ pointwise in $\mathbb{R}$ as $k\to\infty$. Let $\varphi\in C_c^{\infty}(\mathbb{R})$ be such that $\varphi\geq 0$, $\hbox{supp}\,\varphi\subset [-1,1]$ and $\int_{\mathbb{R}}\varphi(x)\,\mathrm{d}x=1.$ Set $\varphi_{\epsilon}\triangleq\frac{1}{\epsilon}\varphi(\frac{x}{\epsilon}),\,\epsilon>0,x\in\mathbb{R}.$ Write $\varsigma_k^\epsilon\equiv\varphi_{\epsilon}\ast \varsigma_k$, where $\ast$ is the usual convolution operation. Then, $\varsigma_k^\epsilon\in C_c^{\infty}(\mathbb{R})$ and, for each $k\in \mathbb{N}$, $\varsigma_k^\epsilon\to \varsigma_k$ uniformly in $\mathbb{R}$ as $\epsilon\to 0$. Now, the desired result follows from the standard diagonal argument. This completes the proof of Lemma \ref{rectangle lemma}.
\end{proof}
We also need the following known approximation result (e.g., \cite[Lemma 3.4.6]{Coh}).
\begin{lemma}\label{approximation lemma}
Let $(X,\mathscr{A},\mu)$ be a finite measure space, and let $\mathscr{A}_0$ be an  algebra of subsets of $X$ such that $\mathscr{A}$ is precisely the $\sigma$-algebra generated by $\mathscr{A}_0$. Then for for each $A\in\mathscr{A}$ and positive number $\epsilon$ there is a set $A_0\in\mathscr{A}_0$ satisfying that $\mu(A\Delta A_0)<\epsilon$.
\end{lemma}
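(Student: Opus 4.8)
The plan is to argue by the classical \emph{good-sets principle}. Set
\[
\mathscr{D}\triangleq\Big\{A\in\mathscr{A}:\ \text{for every }\epsilon>0\text{ there is }A_0\in\mathscr{A}_0\text{ with }\mu(A\Delta A_0)<\epsilon\Big\}.
\]
Trivially $\mathscr{A}_0\subseteq\mathscr{D}$ (take $A_0=A$), and $\mathscr{D}\subseteq\mathscr{A}$ by definition. Hence it suffices to prove that $\mathscr{D}$ is a $\sigma$-algebra: then $\mathscr{D}$ is a $\sigma$-algebra containing $\mathscr{A}_0$, so it contains $\sigma(\mathscr{A}_0)=\mathscr{A}$, which forces $\mathscr{D}=\mathscr{A}$ and proves the lemma.

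First I would record two elementary facts about symmetric differences that get used repeatedly: $E\Delta F=E^c\Delta F^c$ for all $E,F\subseteq X$, and $E\Delta G\subseteq(E\Delta F)\cup(F\Delta G)$, whence by monotonicity and subadditivity of $\mu$ one gets the ``triangle inequality'' $\mu(E\Delta G)\le\mu(E\Delta F)+\mu(F\Delta G)$. With these in hand, closure of $\mathscr{D}$ under the operations defining a $\sigma$-algebra splits into three easy checks: $X\in\mathscr{D}$ since $X\in\mathscr{A}_0$; and if $A\in\mathscr{D}$ with $\mu(A\Delta A_0)<\epsilon$ for some $A_0\in\mathscr{A}_0$, then $A_0^c\in\mathscr{A}_0$ and $\mu(A^c\Delta A_0^c)=\mu(A\Delta A_0)<\epsilon$, so $\mathscr{D}$ is closed under complementation.

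The only place where finiteness of $\mu$ is genuinely needed is closure under countable unions, which I would handle as follows. Let $A_n\in\mathscr{D}$ for $n\in\mathbb{N}$, put $A=\bigcup_{n=1}^\infty A_n$ and fix $\epsilon>0$. Since $\mu$ is finite, continuity of measure gives $\mu\big(A\setminus\bigcup_{n=1}^N A_n\big)\to 0$ as $N\to\infty$, so we may choose $N$ with $\mu(A\setminus B)<\epsilon/2$, where $B\triangleq\bigcup_{n=1}^N A_n$. For each $n\le N$ pick $A_{0,n}\in\mathscr{A}_0$ with $\mu(A_n\Delta A_{0,n})<\epsilon/(2N)$, and set $B_0\triangleq\bigcup_{n=1}^N A_{0,n}$, which lies in $\mathscr{A}_0$ because an algebra is closed under finite unions. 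From $B\Delta B_0\subseteq\bigcup_{n=1}^N(A_n\Delta A_{0,n})$ we get $\mu(B\Delta B_0)<\epsilon/2$, and since $B\subseteq A$ the triangle inequality yields $\mu(A\Delta B_0)\le\mu(A\Delta B)+\mu(B\Delta B_0)=\mu(A\setminus B)+\mu(B\Delta B_0)<\epsilon$. Hence $A\in\mathscr{D}$, completing the verification that $\mathscr{D}$ is a $\sigma$-algebra.

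There is no real obstacle here: this is a textbook fact (indeed it is quoted from \cite[Lemma 3.4.6]{Coh}), and the argument is a routine instance of the good-sets/monotone-class method. The one point deserving attention is the reduction, in the countable-union step, to a \emph{finite} union $B$, which is exactly where finiteness of $\mu$ (equivalently, continuity of $\mu$ applied to $A\setminus\bigcup_{n\le N}A_n\downarrow\emptyset$) enters; without it the statement can fail. Alternatively, one could invoke the monotone class theorem by checking that $\mathscr{D}$ is a monotone class containing the algebra $\mathscr{A}_0$, but the $\sigma$-algebra route above is self-contained and no longer.
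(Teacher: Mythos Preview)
Your proof is correct and is essentially the standard textbook argument. Note, however, that the paper does not supply its own proof of this lemma at all: it is stated as a known result and simply referenced to \cite[Lemma 3.4.6]{Coh}, so there is nothing to compare against beyond observing that your argument is precisely the kind of good-sets proof one finds in that reference.
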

We denote by $\hbox{\rm span}\;\{e^{\sqrt{-1}\varphi}:\varphi\in (\ell^p)^*\}$ the complex linear space spanned by the set $\{e^{\sqrt{-1}\varphi}:\varphi\in (\ell^p)^*\}$ (In this subsection, $(\ell^p)^*$ stands for the real dual space of $\ell^p$). We have the following result:
\begin{proposition}\label{dense prop}
Both $\mathscr {F}C_c^{\infty}(\ell^p)$ and $\hbox{\rm span}\;\{e^{\sqrt{-1}\varphi}:\varphi\in (\ell^p)^*\}$ are dense in $L^2(\ell^p,P)$.
\end{proposition}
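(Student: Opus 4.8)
The plan is to prove density of the two spaces by a two-step reduction: first reduce to approximating characteristic functions of a generating algebra, then reduce those to the smooth/exponential classes. Concretely, I would begin by recalling that $\mathscr{B}(\ell^p) = \{A \cap \ell^p : A \in \mathscr{B}(\mathbb{C}^\infty)\}$ and that $\mathscr{B}(\mathbb{C}^\infty)$ is the $\sigma$-algebra generated by the algebra $\mathscr{A}_0$ of ``finite-dimensional cylinder rectangles,'' i.e.\ sets of the form $A_1 \times \cdots \times A_k \times \prod_{l>k}\mathbb{C}$ with each $A_i$ a finite product of bounded real intervals (so each such cylinder, intersected with $\ell^p$, is governed by finitely many of the coordinate functionals, which lie in $(\ell^p)^*$). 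Since $P$ is a finite (probability) measure, Lemma~\ref{approximation lemma} shows that finite $\mathbb{C}$-linear combinations of indicators $\chi_{A \cap \ell^p}$ with $A \in \mathscr{A}_0$ are dense in $L^2(\ell^p,P)$: given any Borel set and $\epsilon>0$, pick $A_0 \in \mathscr{A}_0$ with $P(A \Delta A_0) < \epsilon$, so $\|\chi_A - \chi_{A_0}\|_{L^2(\ell^p,P)}^2 = P(A\Delta A_0) < \epsilon$; then simple functions being dense in $L^2$ finishes this reduction.

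Next I would show each cylinder indicator $\chi_{A_0 \cap \ell^p}$ with $A_0 = R \times \prod_{l>k}\mathbb{C}$, $R$ a bounded rectangle in $\mathbb{R}^n$ (after identifying the first $k$ complex coordinates with $\mathbb{R}^n$, $n=2k$), is an $L^2(\ell^p,P)$-limit of elements of $\mathscr{F}C_c^\infty(\ell^p)$. For this, apply Lemma~\ref{rectangle lemma} to get uniformly bounded $f_j \in C_c^\infty(\mathbb{R}^n)$ with $f_j \to \chi_R$ pointwise on $\mathbb{R}^n$. Writing $\varphi_1,\dots,\varphi_n$ for the real coordinate functionals (each in $(\ell^p)^*$), the functions $f_j(\varphi_1,\dots,\varphi_n)$ lie in $\mathcal{F}C_c^\infty(\ell^p) \subset \mathscr{F}C_c^\infty(\ell^p)$, converge pointwise $P$-a.e.\ to $\chi_{A_0\cap\ell^p}$, and are dominated by a constant (hence in $L^2$ of the probability space); the dominated convergence theorem gives $L^2$-convergence. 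Combining with the previous paragraph yields density of $\mathscr{F}C_c^\infty(\ell^p)$.

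For the exponential span, I would proceed similarly but replace the smooth bump functions by trigonometric approximants. The cleanest route is Fourier/Stone--Weierstrass: on a fixed bounded rectangle $R \subset \mathbb{R}^n$, the function $\chi_R$ can be approximated in $L^2(\mathbb{R}^n, \mu_n)$ — where $\mu_n$ is the pushforward of $P$ under $(\varphi_1,\dots,\varphi_n)$, a Gaussian on $\mathbb{R}^n$ — by finite linear combinations of the exponentials $x \mapsto e^{\sqrt{-1}\langle \xi, x\rangle}$, $\xi \in \mathbb{R}^n$ (these separate points, are closed under conjugation and multiplication, and $\mu_n$ is a finite measure, so their span is $L^2(\mu_n)$-dense; one may invoke that periodization on a large box plus truncation, or directly a density theorem for characters against a finite measure). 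Each such exponential pulls back to $e^{\sqrt{-1}\langle \xi,\cdot\rangle \circ (\varphi_1,\dots,\varphi_n)} = e^{\sqrt{-1}\psi}$ with $\psi = \sum_j \xi_j \varphi_j \in (\ell^p)^*$, hence lies in $\mathrm{span}\{e^{\sqrt{-1}\varphi}\}$. Transporting the approximation back through the isometry $g \mapsto g\circ(\varphi_1,\dots,\varphi_n)$ from $L^2(\mathbb{R}^n,\mu_n)$ into $L^2(\ell^p,P)$ shows $\chi_{A_0\cap\ell^p}$ is in the closure of $\mathrm{span}\{e^{\sqrt{-1}\varphi}\}$, and then the first reduction completes the proof.

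The main obstacle, and the step deserving the most care, is the measure-theoretic bookkeeping at the interface between $\ell^p$ and $\mathbb{C}^\infty$: one must be sure that the coordinate functionals really do lie in $(\ell^p)^*$ (true, since $|z_i| \le \|z\|_{\ell^p}$), that the generating algebra $\mathscr{A}_0$ genuinely generates $\mathscr{B}(\ell^p)$ after intersection (which follows from the cited identification $\mathscr{B}(\ell^p) = \{A \cap \ell^p : A\in\mathscr{B}(\mathbb{C}^\infty)\}$ together with the product-$\sigma$-algebra description of $\mathscr{B}(\mathbb{C}^\infty)$), and that $P$ is carried by $\ell^p$ so that ``$P$-a.e.'' pointwise convergence on $\mathbb{C}^\infty$ transfers to $\ell^p$ — this last point is exactly Proposition~\ref{lem1}. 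Everything else (dominated convergence against a probability measure, Stone--Weierstrass for characters) is routine once this scaffolding is in place.
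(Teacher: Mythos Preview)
Your proposal is correct and follows essentially the same route as the paper: the reduction to cylinder-rectangle indicators via Lemma~\ref{approximation lemma} and then Lemma~\ref{rectangle lemma} plus dominated convergence is exactly how the paper handles $\mathscr{F}C_c^\infty(\ell^p)$. For the exponential span the paper organizes things slightly differently---it shows $\mathscr{F}C_c^\infty(\ell^p)\subset\overline{\mathrm{span}\{e^{\sqrt{-1}\varphi}\}}$ by periodizing each $f\in C_c^\infty(\mathbb{R})$ and applying Weierstrass, then invokes the first part---whereas you approximate the cylinder indicators directly; but this is only a reordering, and your parenthetical ``periodization on a large box'' is precisely the paper's mechanism (note that the bare Stone--Weierstrass invocation is not quite enough, since $\mathbb{R}^n$ is non-compact, so the periodization argument is the one to lean on).
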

\begin{proof}
Denote by $\overline{\mathscr {F}C_c^{\infty}(\ell^p)}$ the closure of $\mathscr {F}C_c^{\infty}(\ell^p)$ in $L^2(\ell^p,P)$, and by $\mathscr{A}_0$ the algebra  generated by the following family of subsets (in $\ell^p$):
$$
\begin{array}{ll}
\big\{(x_j+\sqrt{-1}y_j)\in \ell^p:&(x_i,y_i)\in [a_i,b_i]\times [c_i,d_i],\, i=1,\cdots,k\big\},
\end{array}
$$
where $a_i,b_i,c_i,d_i\in \mathbb{R}$ satisfying $-\infty <a_i<b_i<+\infty, -\infty <c_i<d_i<+\infty$ ($i=1,\cdots,k$), $k\in \mathbb{N}$.
Also, for each $i\in \mathbb{N}$ , define two functions $\phi_i$ and $\psi_i$ on $\ell^p$ as follows:
$$
\phi_i((x_j+\sqrt{-1}y_j))=x_i,\quad
\psi_i((x_j+\sqrt{-1}y_j))=y_i,\quad
\forall\;(x_j+\sqrt{-1}y_j)\in \ell^p.
$$
It is clear that $\phi_i,\,\psi_i\in (\ell^p)^*$. Note that $\overline{\mathscr {F}C_c^{\infty}(\ell^p)}$ is a complex linear space which is closed under multiplication. Hence, combining the fact that $\mathscr{B}(\ell^p)=\{A\cap\ell^p:A\in \mathscr{B}(\mathbb{C}^{\infty})\}$,  and by Lemmas \ref{rectangle lemma} and \ref{approximation lemma}, we conclude that $\chi_B\in\overline{\mathscr {F}C_c^{\infty}(\ell^p)}$ for each $B\in\mathscr{B}(\ell^p)$. Therefore, $\overline{\mathscr {F}C_c^{\infty}(\ell^p)}=L^2(\ell^p,P)$.

Denote by $\overline{\text{span}\;\{e^{\sqrt{-1}\varphi}:\varphi\in (\ell^p)^*\}}$  the closure of span$\;\{e^{\sqrt{-1}\varphi}:\varphi\in (\ell^p)^*\}$ in $L^2(\ell^p,P)$. Suppose that $f\in C_c^{\infty}(\mathbb{R})$ and for $M\gg 1$, let $f_M(x)\triangleq \sum_{k\in\mathbb{Z}}f(x+2\pi kM),\,\,x\in \mathbb{R}.$ Then $f_M$ is a $2\pi M$-period continuous function on $\mathbb{R}$ and therefore by the Weierstrass approximation theorem (e.g., \cite[Theorem 5.7]{Rud91}), there exist complex polynomials $p_m(\xi,\overline{\xi})$ for $\xi\in\mathbb{C}$ and $m\in \mathbb{N}$ such that $p_m(e^{\sqrt{-1}\frac{x}{M}},e^{-\sqrt{-1}\frac{x}{M}})$ converges to $f_M(x)$ uniformly in $x$ as $m\to\infty$. In particular this implies that for any $\varphi\in (\ell^p)^*$, $f_M(\varphi)$ is the uniform limit of $p_m(e^{\sqrt{-1}\frac{\varphi}{M}},e^{-\sqrt{-1}\frac{\varphi}{M}})$  as $M\to\infty$. Since $f_M(\varphi)\to f(\varphi)$ as $M\to\infty$, it follows that $f(\varphi)\in\overline{\text{span}\;\{e^{\sqrt{-1}\varphi}:\varphi\in (\ell^p)^*\}}$. Similarly, for any $n\in\mathbb{N}$, $F\in C_c^{\infty}(\mathbb{R}^n)$ and $\varphi_i\in (\ell^p)^*$ ($1\leq i\leq n$), one can show that $F(\varphi_1,\cdots,\varphi_n)\in\overline{\text{span}\;\{e^{\sqrt{-1}\varphi}:\varphi\in (\ell^p)^*\}}$. Hence, $\overline{\mathscr {F}C_c^{\infty}(\ell^p)}\subset\overline{\text{span}\;\{e^{\sqrt{-1}\varphi}:\varphi\in (\ell^p)^*\}}$. This completes the proof of Proposition \ref{dense prop}.
\end{proof}

We denote by $\mathbb{N}^{(\mathbb{N})}$ the set of all finitely supported sequences of nonnegative integers. For $m\in\mathbb{N}$, set $\mathbb{N}^m\triangleq\{\alpha\in \mathbb{N}^{(\mathbb{N})}:\alpha=(\alpha_1,\cdots,\alpha_m,0,\cdots,0,\cdots),\,\alpha_i\in\mathbb{N},\,1\leq i\leq m\}$. Then $\mathbb{N}^{(\mathbb{N})}=\bigcup\limits_{m=1}^{\infty}\mathbb{N}^m.$ For $\textbf{z}=(z_k)=(x_k+iy_k)\in \mathbb{C}^{\infty}$, $\alpha=(\alpha_i),\,\beta=(\beta_i)\in \mathbb{N}^{m}$, write $$\textbf{z}^{\alpha} \overline{\textbf{z}}^{\beta}\triangleq z_1^{\alpha_1}\overline{z_1}^{\beta_1}z_2^{\alpha_2}\overline{z_2}^{\beta_2}\cdots z_m^{\alpha_m}\overline{z_m}^{\beta_m},
 $$
which is called a monomial on $\mathbb{C}^{\infty}$. Obviously, $\textbf{z}^{\alpha} \overline{\textbf{z}}^{\beta}$ can be viewed as a function on $\ell^p$ or $\mathbb{C}^m$. Set $$\mathcal {P}\triangleq\text{span}\;\{ \textbf{z}^{\alpha}{\overline{\textbf{z}}}^{\beta}:\alpha,\beta\in \mathbb{N}^{(\mathbb{N})}\},\quad\mathcal {P}_m\triangleq\text{span}\;\{ \textbf{z}^{\alpha}{\overline{\textbf{z}}}^{\beta}:\alpha,\beta\in \mathbb{N}^{m}\}.
$$
One has the following result.

\begin{proposition}\label{Desity Theorem II}
For any $p\in[1,\infty)$, $\mathcal{P}=\bigcup\limits_{m=1}^{\infty}\mathcal{P}_m$, $\mathcal{P}$ is dense in $L^2(\ell^p,P)$ and $\mathcal {P}_n$ is dense in $L^2(\mathbb{C}^n,\mathcal {N}^n)$ for each $n\in\mathbb{N}$. $L^2(\ell^p,P)$ and $L^2(\mathbb{C}^n,\mathcal {N}^n)$ are separable Hilbert spaces.
\end{proposition}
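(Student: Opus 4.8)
The plan is to reduce everything to the density of $\mathscr{F}C_c^\infty(\ell^p)$ in $L^2(\ell^p,P)$, already established in Proposition \ref{dense prop}, together with the classical approximation theory for Gaussian measures on $\mathbb{R}^n$. First I would dispose of the easy set-theoretic identity $\mathcal{P}=\bigcup_{m=1}^\infty \mathcal{P}_m$: every element of $\mathcal{P}$ is a finite linear combination of monomials $\textbf{z}^\alpha\overline{\textbf{z}}^\beta$ with $\alpha,\beta\in \mathbb{N}^{(\mathbb{N})}$, and since there are only finitely many of them, all the multi-indices involved are supported in $\{1,\dots,m\}$ for some common $m$, hence the combination lies in $\mathcal{P}_m$; the reverse inclusion is trivial. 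Next I would observe that each monomial $\textbf{z}^\alpha\overline{\textbf{z}}^\beta$ is genuinely an element of $L^2(\ell^p,P)$ (and of $L^2(\mathbb{C}^n,\mathcal{N}^n)$): indeed it is dominated by a polynomial in the coordinates $|x_k|,|y_k|$, and all such polynomials are integrable against $\mathcal{N}$ because each factor $\mathcal{N}_{a_k}$ is Gaussian with all moments finite and the tail product converges — a Fernique-type bound already appears in Lemma \ref{Ferniqeu's theorem}, and here one only needs finitely many coordinates raised to powers, so finiteness of the $L^2$ norm is immediate.

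The core step is density of $\mathcal{P}_n$ in $L^2(\mathbb{C}^n,\mathcal{N}^n)$. Identifying $\mathbb{C}^n$ with $\mathbb{R}^{2n}$ and $\mathcal{N}^n$ with the nondegenerate Gaussian measure $\mu$ on $\mathbb{R}^{2n}$ with density proportional to $e^{-\sum (x_k^2+y_k^2)/2a_k^2}$, the claim becomes: real (hence complex) polynomials in the $2n$ real variables are dense in $L^2(\mathbb{R}^{2n},\mu)$. This is the standard fact that the Hermite polynomials form an orthonormal basis of $L^2$ of a Gaussian; equivalently, one shows that if $g\in L^2(\mu)$ is orthogonal to all polynomials then $g=0$, by noting that the finiteness of exponential moments of $\mu$ (Lemma \ref{Ferniqeu's theorem}, or a direct Gaussian computation) makes the Fourier–Laplace transform $\xi\mapsto \int g(x)e^{\langle\xi,x\rangle}\,d\mu(x)$ entire, and its Taylor coefficients at $0$ are (up to constants) the integrals of $g$ against monomials, all zero; hence the transform vanishes identically, so $g\,d\mu=0$ as a measure, so $g=0$. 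Rewriting monomials in $x_k,y_k$ in terms of $z_k,\overline{z_k}$ via $x_k=(z_k+\overline{z_k})/2$, $y_k=(z_k-\overline{z_k})/2\sqrt{-1}$ shows that the complex span of $\{\textbf{z}^\alpha\overline{\textbf{z}}^\beta\}$ equals the complex span of the real monomials, giving density of $\mathcal{P}_n$.

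To pass to $\ell^p$, I would argue that $\mathcal{P}$ is dense in $L^2(\ell^p,P)$ by approximating an arbitrary element of the dense set $\mathscr{F}C_c^\infty(\ell^p)$. Such an element has the form $F(\varphi_1,\dots,\varphi_n)$ with $F\in C_c^\infty(\mathbb{R}^n)$ and $\varphi_i\in(\ell^p)^*$; since each $\varphi_i$ is a continuous linear functional on $\ell^p$, it is an $L^2(\ell^p,P)$-limit of its truncations to finitely many coordinates, so it suffices to treat the case where all $\varphi_i$ depend only on the first $m$ coordinates, i.e. factor through the projection $\pi_m:\ell^p\to\mathbb{C}^m$. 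Then $F(\varphi_1,\dots,\varphi_n)=G\circ\pi_m$ for a bounded measurable (indeed continuous, compactly-in-the-relevant-variables-supported) $G$ on $\mathbb{C}^m$, and since the pushforward of $P$ under $\pi_m$ is exactly $\mathcal{N}^m$, approximating $G$ in $L^2(\mathbb{C}^m,\mathcal{N}^m)$ by elements of $\mathcal{P}_m$ (the step just proved) and composing with $\pi_m$ gives an approximation of $G\circ\pi_m$ in $L^2(\ell^p,P)$ by elements of $\mathcal{P}_m\subset\mathcal{P}$; a diagonal argument over the truncation level closes the gap. Finally, separability of both spaces follows because each has a countable dense subset: in $L^2(\mathbb{C}^n,\mathcal{N}^n)$ take finite rational-coefficient combinations of monomials, and in $L^2(\ell^p,P)$ take $\bigcup_m$ of the corresponding countable dense subsets of $\mathcal{P}_m$.

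I expect the main obstacle to be the reduction in the last paragraph — specifically, controlling the truncation $\varphi_i\mapsto\varphi_i\circ\pi_m$ in $L^2$: one must check that $\|\varphi_i-\varphi_i\circ\pi_m\|_{L^2(\ell^p,P)}\to 0$, which uses that $\varphi_i\in(\ell^p)^*=\ell^{p'}$ acts by a summable (or $p'$-summable) coefficient sequence and that the coordinate functions have uniformly bounded second moments $\int(|x_k|^2+|y_k|^2)\,d\mathcal{N}\le C a_k^2$ with $\sum a_k^2<\infty$, plus uniform continuity of $F$ to convert convergence of the arguments into convergence of $F$ of the arguments; assembling these estimates cleanly, rather than any conceptual difficulty, is where the real work lies.
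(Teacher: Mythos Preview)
Your proposal is correct, but the order of operations differs from the paper's. The paper first proves directly on $\ell^p$ that the larger class $\mathcal{F}=\{Q(\varphi_1,\dots,\varphi_n):\varphi_i\in(\ell^p)^*,\ Q\text{ a polynomial}\}$ is dense in $L^2(\ell^p,P)$: it runs the analytic-continuation argument (your Fourier--Laplace step) in infinite dimensions, using Lemma~\ref{Ferniqeu's theorem} for exponential integrability and then the density of $\mathrm{span}\{e^{\sqrt{-1}\varphi}\}$ from Proposition~\ref{dense prop} to conclude that any $g$ orthogonal to $\mathcal{F}$ vanishes. Only afterwards does it truncate the $\varphi_i$'s to finitely many coordinates (with dominated convergence, again via Lemma~\ref{Ferniqeu's theorem}) to pass from $\mathcal{F}$ to $\mathcal{P}$. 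You instead do the analytic-continuation argument only in $\mathbb{C}^n$ to get $\mathcal{P}_n$ dense in $L^2(\mathbb{C}^n,\mathcal{N}^n)$, and then lift by approximating elements of $\mathscr{F}C_c^\infty(\ell^p)$ through the truncation $\varphi_i\mapsto\varphi_i\circ\pi_m$, exploiting the Lipschitz bound on $F\in C_c^\infty$. Your route is a bit more modular and keeps the Hermite/entire-function step in the classical finite-dimensional setting; the paper's route yields the slightly stronger intermediate statement that $\mathcal{F}$ itself is dense, and its domination argument (polynomial in $|\varphi_i|$ controlled by $e^{\epsilon|\varphi_i|}$) handles the truncation of \emph{polynomial} $Q$ rather than just compactly supported $F$. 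The point you flag as the main obstacle---controlling $\|\varphi_i-\varphi_i\circ\pi_m\|_{L^2}$ via $\sum_{k>m}a_k^2\to 0$ even when $p=1$---is exactly right and goes through as you describe.
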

\begin{proof}
Firstly, we prove that the following set:
\begin{equation}\label{deff}
\begin{array}{ll}
\mathcal {F}\triangleq\{Q(\varphi_1,\cdots,\varphi_n):&n\in\mathbb{N},\varphi_s\in (\ell^p)^*,1\leq s\leq n, Q\hbox{ is a polynomial with}\\[3mm]
&n\hbox{ variables and complex coefficients}\}.
\end{array}
\end{equation}
is dense in $L^2(\ell^p,P)$. We use the contradiction argument. If $\mathcal {F}$ was not dense in $L^2(\ell^p,P)$, then, by the Riesz representation theorem, there would exist a non-zero $g\in L^2(\ell^p,P)$ such that
\begin{eqnarray} \label{zero condition}
\int_{\ell^p}fg\,\mathrm{d}P=0,\quad\forall\;f\in\mathcal {F}.
\end{eqnarray}
We will arrive at contradiction by showing that $g=0.$  Let $\varphi\in (\ell^p)^*$, by Lemma \ref{Ferniqeu's theorem}, there exists $\epsilon>0$, depending only on the norm of $\varphi$,  such that $\int_{\ell^p}e^{\epsilon |\varphi|}\,\mathrm{d}P<\infty.$ Then for any $n\in\mathbb{N}_0\equiv \mathbb{N}\cup\{0\}$ and $\delta\in(0,\frac{\epsilon}{3}),$
$$\int_{\ell^p}|\varphi|^ne^{\delta |\varphi|}|g|\,\mathrm{d}P \leq ||g||_{L^2(\ell^2,P)}\cdot \Big(\int_{\ell^p}|\varphi|^{2n}e^{2\delta |\varphi|} \,\mathrm{d}P  \Big)^{\frac{1}{2}}<\infty,$$
since there exists a constant $C>0$, depending only on $\epsilon>0$,  such that $|\varphi|^{2n}e^{2\delta |\varphi|}\leq Ce^{\epsilon |\varphi|}$. Define
$$
F(z)\triangleq \int_{\ell^p}e^{z\varphi}g\,\mathrm{d}P,\quad \forall\; z\in\mathbb{C}\hbox{ with }|\hbox{\rm Re}\;z|<\delta.
$$
Then $F(z)$ is analytic on $\{z\in\mathbb{C}:\,|\text{Re}z|<\delta\}$ and $F^{(n)}(z)=\int_{\ell^p}\varphi^ne^{z\varphi}g\,\mathrm{d}P$ for all $n\in\mathbb{N}_0.$ In particular, $F^{(n)}(0)=\int_{\ell^p}\varphi^n g\,\mathrm{d}P$ for all $n\in\mathbb{N}_0$ and hence $F^{(n)}(0)=0$ by (\ref{zero condition}). Since $\{z\in\mathbb{C}:\,|\text{Re}z|<\delta\}$ is a connected open set, by the uniqueness theorem for analytic functions of one variable (See \cite[Theorem 10.18]{Rud87}) it then follows that $F(z)\equiv 0$ on $\{z\in\mathbb{C}:\,|\text{Re}z|<\delta\}$ and in particular $F(\sqrt{-1})=\int_{\ell^p}e^{\sqrt{-1}\varphi}g\,\mathrm{d}P=0$. Since $\varphi\in (\ell^p)^*$ is arbitrary we have shown that $\int_{\ell^p}e^{\sqrt{-1}\varphi}g\,\mathrm{d}P=0$ for all $\varphi\in (\ell^p)^*.$ By Proposition \ref{dense prop}, span$\;\{e^{\sqrt{-1}\varphi}:\varphi\in (\ell^p)^*\}$ is dense in $L^2(\ell^p,P)$ this implies that $g=0$ and we have arrive at the desired contradiction and hence the set $\mathcal {F}$ defined by \eqref{deff} is dense in $L^2(\ell^p,P)$.

For $\varphi_1,\cdots,\varphi_n\in (\ell^p)^*$, each $1\leq s\leq n$ and $m\in\mathbb{N}$, set
$$
\varphi_s^m(\textbf{z})=\varphi_s(z_1,\cdots,z_m,0,0,\cdots),\quad\forall\,\textbf{z}=(z_k)\in \ell^p,
$$
and $Q_m\triangleq Q(\varphi_1^m,\cdots,\varphi_n^m)$. Then by $p\in[1,\infty)$, using Lemma \ref{Ferniqeu's theorem} and the dominated control convergence theorem we have $Q_m\to Q(\varphi_1,\cdots,\varphi_n)$ in $L^2(\ell^p,P)$ and note that $Q_m\in\mathcal{P}$ for each $m$ which proves that  $\mathcal {P}$ is dense in $L^2(\ell^p,P)$. Similarly discussion may deduce that $\mathcal {P}_n$ is dense in $L^2(\mathbb{C}^n,\mathcal {N}^n)$. The last claim of  Proposition \ref{Desity Theorem II} follows from that the members with rational coefficients of $\mathcal {P}$ and $\mathcal {P}_n$ are countable dense subsets in $L^2(\ell^p,P)$ and $L^2(\mathbb{C}^n,\mathcal {N}^n)$ correspondingly.  The proof of Proposition \ref{Desity Theorem II} is completed.
\end{proof}

\begin{remark}\label{identify remark}
Note that  each $f\in\mathcal{P}_n$  can viewed as a function on $\ell^p$ or $\mathbb{C}^n$ and it holds that $\int_{\ell^p}|f|^2\,\mathrm{d}P=\int_{\mathbb{C}^n}|f|^2\,\mathrm{d}\mathcal {N}^n$. By Proposition \ref{Desity Theorem II}, $L^2(\mathbb{C}^n,\mathcal {N}^n)$ can be identified as a closed subspace of $L^2(\ell^p,P)$. Thus in the sequel, we will regard $L^2(\mathbb{C}^n,\mathcal {N}^n)$ as a closed subspace of $L^2(\ell^p,P)$ when there is no confusion.
\end{remark}

For each $n\in\mathbb{N}$, we denote by $C_c^{\infty}((\mathbb{R}^2)^n)$ all $C^{\infty}$ real-valued functions on $(\mathbb{R}^2)^n$ with compact support. Since $\mathbb{C}^n$ can be identified with $(\mathbb{R}^2)^n$, each $f\in C_c^{\infty}((\mathbb{R}^2)^n)$ can be viewed as a function on $\mathbb{C}^n$. Then $f$ can also be viewed as a cylinder function on $\mathbb{C}^{\infty}$ or $\ell^p$. Set
$$\mathscr {C}_c^{\infty}(\mathbb{C}^n)\triangleq \{f+\sqrt{-1}g:f,g\in C_c^{\infty}((\mathbb{R}^2)^n)\},\quad\mathscr {C}_c^{\infty}\triangleq \bigcup_{n=1}^{\infty}\mathscr {C}_c^{\infty}(\mathbb{C}^n).
$$
By Remark \ref{identify remark}, we can view $\mathscr {C}_c^{\infty}(\mathbb{C}^n)$ as a subspace of $L^2(\mathbb{C}^n,\mathcal {N}^n)$ and $\mathscr {C}_c^{\infty}$ is a subset of $L^2(\ell^p, P)$. A classical mollification argument shows that $\mathscr {C}_c^{\infty}(\mathbb{C}^n)$ is dense in $L^2(\mathbb{C}^n,\mathcal {N}^n)$. Hence, by Proposition \ref{Desity Theorem II}, we obtain the following result.
\begin{proposition}\label{smooth dense functions}
For any $p\in[1,\infty)$, $\mathscr {C}_c^{\infty}$ is a dense subset of $L^2(\ell^p, P)$.
\end{proposition}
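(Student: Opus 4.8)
The plan is to reduce the infinite-dimensional statement to the finite-dimensional fact that $C_c^\infty((\mathbb{R}^2)^n)$ is dense in $L^2(\mathbb{C}^n,\mathcal{N}^n)$, and then invoke Proposition \ref{Desity Theorem II}. First I would verify the claimed finite-dimensional density: since $\mathcal{N}^n$ is a Gaussian (hence finite, Borel, locally finite) measure on $\mathbb{C}^n\cong(\mathbb{R}^2)^n$ with a smooth positive density with respect to Lebesgue measure, a standard mollification argument applies. Concretely, given $h\in L^2(\mathbb{C}^n,\mathcal{N}^n)$, one truncates $h$ by $h\chi_{\{|h|\le M\}}\chi_{B_R}$ (a bounded function of compact support), which converges to $h$ in $L^2(\mathbb{C}^n,\mathcal{N}^n)$ by dominated convergence since the Gaussian density is bounded on $B_R$; then one mollifies this bounded compactly supported function by convolution with a smooth approximate identity $\varphi_\epsilon$, obtaining functions in $C_c^\infty((\mathbb{R}^2)^n)$ that converge in $L^2$ of Lebesgue measure, hence (again using boundedness of the Gaussian density on a fixed compact set containing all supports) in $L^2(\mathbb{C}^n,\mathcal{N}^n)$. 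Taking real and imaginary parts gives density of $\mathscr{C}_c^\infty(\mathbb{C}^n)$ in $L^2(\mathbb{C}^n,\mathcal{N}^n)$.

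Next I would pass to the infinite-dimensional space. By Remark \ref{identify remark}, $L^2(\mathbb{C}^n,\mathcal{N}^n)$ is identified with the closed subspace of $L^2(\ell^p,P)$ consisting of functions depending only on the first $n$ coordinates, and under this identification $\mathscr{C}_c^\infty(\mathbb{C}^n)\subset L^2(\ell^p,P)$. In particular $\mathcal{P}_n\subset\overline{\mathscr{C}_c^\infty(\mathbb{C}^n)}$ (closure in $L^2(\ell^p,P)$), because each monomial in $\mathcal{P}_n$ lies in $L^2(\mathbb{C}^n,\mathcal{N}^n)$ and is approximated there by elements of $\mathscr{C}_c^\infty(\mathbb{C}^n)$, and the $L^2(\mathbb{C}^n,\mathcal{N}^n)$-norm agrees with the $L^2(\ell^p,P)$-norm on this subspace. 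Hence $\mathcal{P}=\bigcup_{m}\mathcal{P}_m\subset\overline{\mathscr{C}_c^\infty}$. Since $\mathcal{P}$ is dense in $L^2(\ell^p,P)$ by Proposition \ref{Desity Theorem II}, it follows that $\mathscr{C}_c^\infty$ is dense in $L^2(\ell^p,P)$, which is the assertion.

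The only genuinely substantive point is the finite-dimensional mollification lemma, and even that is routine because the Gaussian density is smooth, strictly positive, and bounded on compact sets, so no delicate weighted-approximation issues arise; the main thing to be careful about is keeping all mollified functions supported in a single fixed compact set so that the comparison between $L^2$ of Lebesgue measure and $L^2(\mathbb{C}^n,\mathcal{N}^n)$ is uniform. I expect no real obstacle: the infinite-dimensional content has already been absorbed into Proposition \ref{Desity Theorem II} via the density of $\mathcal{P}$, and everything here is bookkeeping plus a classical approximation argument. One could alternatively bypass $\mathcal{P}$ and argue directly that $\mathscr{C}_c^\infty$ separates points / that its closure contains all bounded Borel cylinder functions via Lemmas \ref{rectangle lemma} and \ref{approximation lemma}, mirroring the proof of Proposition \ref{dense prop}, but routing through $\mathcal{P}_n$ is shorter.
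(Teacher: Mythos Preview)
Your proposal is correct and follows essentially the same route as the paper: the paper's argument (given in the paragraph immediately preceding the proposition) consists of the one-line observation that a classical mollification argument gives density of $\mathscr{C}_c^{\infty}(\mathbb{C}^n)$ in $L^2(\mathbb{C}^n,\mathcal{N}^n)$, after which Proposition~\ref{Desity Theorem II} (density of $\mathcal{P}$) and Remark~\ref{identify remark} finish the job. You have merely supplied the details of the mollification step and made explicit the chain $\mathcal{P}_n\subset\overline{\mathscr{C}_c^{\infty}(\mathbb{C}^n)}\subset\overline{\mathscr{C}_c^{\infty}}$, which the paper leaves implicit.
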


\subsection{Definition of the $\overline{\partial}$ operator on $L^2(\ell^p,P)$}
Suppose that $f$ is a complex-valued function defined on $\ell^p$. For each $\textbf{z}=(z_j)=(x_j+\sqrt{-1}y_j),\,\textbf{z}^1=(z_j^1)=(x_j^1+\sqrt{-1}y_j^1)\in \ell^p$, where $x_j,y_j,x_j^1,y_j^1\in \mathbb{R}$ for each $j$, we define
\begin{eqnarray*}
&&\displaystyle\frac{\partial f(\textbf{z})}{\partial x_j}\triangleq\lim_{\mathbb{R}\ni t\to 0}\frac{f(z_1,\cdots,z_{j-1},z_j+t,z_{j+1},\cdots)-f(\textbf{z})}{t},\\[3mm]
&&\displaystyle\frac{\partial f(\textbf{z})}{\partial y_j}\triangleq\lim_{\mathbb{R}\ni t\to 0}\frac{f(z_1,\cdots,z_{j-1},z_j+\sqrt{-1}t,z_{j+1},\cdots)-f(\textbf{z})}{t},
\end{eqnarray*}
provided that the limits exist in some sense.
Just like the case of finite dimensions, we set $\overline{\textbf{z}}=(\overline{z_j})=(x_j-\sqrt{-1}y_j)$, and
\begin{eqnarray*}
 \frac{\partial f(\textbf{z})}{\partial z_j}\triangleq\frac{1}{2}\bigg(\frac{\partial f(\textbf{z})}{\partial x_j}-\sqrt{-1}\frac{\partial f(\textbf{z})}{\partial y_j}\bigg),\qquad
 \frac{\partial f(\textbf{z})}{\partial \overline{z_j}}\triangleq\frac{1}{2}\bigg(\frac{\partial f(\textbf{z})}{\partial x_j}+\sqrt{-1}\frac{\partial f(\textbf{z})}{\partial y_j}\bigg).
\end{eqnarray*}
Then, inspired by Lempert's definition \cite{ Lem99}, we {\it formally} define a function $df(\cdot,\cdot)$ on $\ell^p\times \ell^p$ by
\begin{eqnarray*}
df(\textbf{z},\textbf{z}^1)=\sum_{j=1}^{\infty} \bigg(\frac{\partial f(\textbf{z})}{\partial x_j}x_j^1+\frac{\partial f(\textbf{z})}{\partial y_j}y_j^1\bigg)=\sum_{j=1}^{\infty} \bigg(\frac{\partial f(\textbf{z})}{\partial z_j}z_j^1+\frac{\partial f(\textbf{z})}{\partial \overline{z_j}}\overline{z_j^1}\bigg).
\end{eqnarray*}

Now, for any $f\in\mathscr {C}_c^{\infty}$, we write
\begin{eqnarray}\label{200209dd1}
\overline{\partial} f(\textbf{z},\textbf{z}^1)\triangleq \sum_{j=1}^{\infty}\frac{\partial f(\textbf{z})}{\partial \overline{z_j}}\overline{z_j^1},
\end{eqnarray}
which is a well-defined function from $\ell^p\times \ell^p$ into $\mathbb{C}$, and it is conjugate $\mathbb{C}$-linear on the second variable $\textbf{z}^1$. By Proposition \ref{smooth dense functions}, $\mathscr {C}_c^{\infty}$ is a dense subset of $L^2(\ell^p,P)$ and obviously $\overline{\partial}f\in L^2(\ell^p\times \ell^p,P\times P)$ for each $f\in\mathscr {C}_c^{\infty}$. In this way, we defined an operator $\overline{\partial}$ from the dense subset $\mathscr {C}_c^{\infty}$ of $L^2(\ell^p,P)$ into  $L^2(\ell^p\times \ell^p,P\times P)$.

Now we extend the definition of $\overline{\partial}$ weakly. For simplicity of notation, for any smooth function $\varphi=\varphi(\textbf{z})$ on $\mathbb{C}^{\infty}$ and $j\in\mathbb{N}$, we write
\begin{equation}\label{delta-j}\delta_j\varphi\equiv\frac{\partial \varphi}{\partial z_j}-\frac{\overline{z_j}}{2a_j^2}\varphi,\quad\forall\;\textbf{z}=(z_j).
\end{equation}
\begin{definition}\label{200209dd2}
For $f\in L^2(\ell^p, P)$, if there exists a sequence $\{f_j\}_{j=1}^{\infty}\subset L^2(\ell^p, P)$ such that
\begin{eqnarray*}
\int_{\ell^p}f_j\cdot\overline{\varphi}\,\mathrm{d}P
=-\int_{\ell^p}f \cdot\overline{\delta_j\varphi}\,\mathrm{d}P,\quad\forall\;\varphi\in \mathscr{C}_c^{\infty}
\end{eqnarray*}
and
$$\sum\limits_{j=1}^{\infty}2a_j^2  \int_{\ell^p} |f_j|^2\,\mathrm{d}P  <\infty,
$$
then, we define $\overline{\partial}f(\cdot,\cdot)\in L^2(\ell^p\times \ell^p,P\times P)$ as follows:
\begin{eqnarray*}
\overline{\partial}f(\textbf{z},\textbf{z}^1)\triangleq\sum\limits_{j=1}^{\infty}f_j(\textbf{z})\overline{z_j^1} .
\end{eqnarray*}
\end{definition}
\begin{remark}
Clearly, for any $f\in \mathscr {C}_c^{\infty}$, $\overline{\partial} f(\textbf{z},\textbf{z}^1)$ given by Definition \ref{200209dd2} coincides that by \eqref{200209dd1}. Since $\mathscr {C}_c^{\infty}$ is dense in $L^2(\ell^p, P)$, we see that $\overline{\partial}$ is a densely defined linear operator.
\end{remark}
\begin{lemma}\label{db is closable}
The operator $\overline{\partial}$ given by Definition \ref{200209dd2} is a densely defined, closed linear operator from $ L^2(\ell^p,P)$ to $L^2(\ell^p\times \ell^p,P\times P)$.
\end{lemma}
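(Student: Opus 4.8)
The plan is to check the three asserted properties—densely defined, linear, closed—in that order, the first two being essentially immediate and the last carrying the content of the lemma. That $\overline{\partial}$ is densely defined is clear: by the Remark following Definition \ref{200209dd2}, $\mathscr{C}_c^\infty\subset D_{\overline{\partial}}$, and $\mathscr{C}_c^\infty$ is dense in $L^2(\ell^p,P)$ by Proposition \ref{smooth dense functions}. For linearity I would first record that the sequence $\{f_j\}$ attached to a given $f\in D_{\overline{\partial}}$ is uniquely determined in $L^2(\ell^p,P)$: if $\{f_j\}$ and $\{f_j'\}$ both satisfy the identity in Definition \ref{200209dd2}, then $\int_{\ell^p}(f_j-f_j')\overline{\varphi}\,\mathrm{d}P=0$ for every $\varphi\in\mathscr{C}_c^\infty$, and density of $\mathscr{C}_c^\infty$ forces $f_j=f_j'$. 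Hence $\overline{\partial}f$ is unambiguous, and since both the defining identity and the bound $\sum_j 2a_j^2\int_{\ell^p}|f_j|^2\,\mathrm{d}P<\infty$ are preserved under linear combinations, $D_{\overline{\partial}}$ is a subspace on which $\overline{\partial}$ acts linearly.

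The key structural fact behind closedness is that the coordinate functions $\mathbf{z}^1\mapsto\overline{z_j^1}$ ($j\in\mathbb{N}$) are mutually orthogonal in $L^2(\ell^p,P)$ with $\|\overline{z_j^1}\|_{L^2(\ell^p,P)}^2=2a_j^2$; this is a one-line computation with the Gaussian factors $\mathcal{N}_{a_j}$ (each $z_j^1$ has mean zero, $\mathbb{E}|z_j^1|^2=2a_j^2$, $\mathbb{E}(z_j^1)^2=0$, and distinct coordinate blocks are independent). Consequently, for any $\{h_j\}\subset L^2(\ell^p,P)$ with $\sum_j 2a_j^2\|h_j\|_{L^2(\ell^p,P)}^2<\infty$ one has
\begin{equation*}
\Big\|\sum_j h_j(\mathbf{z})\overline{z_j^1}\Big\|_{L^2(\ell^p\times\ell^p,\,P\times P)}^2=\sum_j 2a_j^2\|h_j\|_{L^2(\ell^p,P)}^2,
\end{equation*}
so the assignment $\{h_j\}\mapsto\sum_j h_j(\mathbf{z})\overline{z_j^1}$ is a linear isometry $\Phi$ from the weighted Hilbert space $W\triangleq\big\{\{h_j\}\subset L^2(\ell^p,P):\sum_j 2a_j^2\|h_j\|^2<\infty\big\}$ onto a closed subspace of $L^2(\ell^p\times\ell^p,P\times P)$. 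In particular the series in Definition \ref{200209dd2} converges in $L^2(\ell^p\times\ell^p,P\times P)$, and $\overline{\partial}f=\Phi(\{f_j\})$ whenever $f\in D_{\overline{\partial}}$ has associated sequence $\{f_j\}$.

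Now to closedness proper: suppose $f^{(n)}\to f$ in $L^2(\ell^p,P)$ and $\overline{\partial}f^{(n)}\to g$ in $L^2(\ell^p\times\ell^p,P\times P)$, with $\overline{\partial}f^{(n)}=\Phi(\{f_j^{(n)}\})$, $\{f_j^{(n)}\}\in W$. Since $\{\overline{\partial}f^{(n)}\}$ is Cauchy and $\Phi$ is an isometry, $\{f_j^{(n)}\}_n$ is Cauchy in $W$, hence converges to some $\{f_j\}\in W$; applying $\Phi$ gives $\overline{\partial}f^{(n)}\to\sum_j f_j(\mathbf{z})\overline{z_j^1}$, so by uniqueness of limits $g=\sum_j f_j(\mathbf{z})\overline{z_j^1}$, while $f_j^{(n)}\to f_j$ in $L^2(\ell^p,P)$ for every $j$ and $\sum_j 2a_j^2\|f_j\|^2<\infty$. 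It then remains to show $\{f_j\}$ witnesses $f\in D_{\overline{\partial}}$, i.e. $\int_{\ell^p}f_j\overline{\varphi}\,\mathrm{d}P=-\int_{\ell^p}f\,\overline{\delta_j\varphi}\,\mathrm{d}P$ for all $\varphi\in\mathscr{C}_c^\infty$. Here I would note that $\delta_j\varphi=\frac{\partial\varphi}{\partial z_j}-\frac{\overline{z_j}}{2a_j^2}\varphi\in L^2(\ell^p,P)$ for $\varphi\in\mathscr{C}_c^\infty$: the term $\frac{\partial\varphi}{\partial z_j}$ is again a bounded compactly supported cylinder function, and $\overline{z_j}\varphi$ is square integrable because either $\varphi$ already depends on $z_j$ (so $\overline{z_j}\varphi$ is bounded with compact support) or it does not (so $\int_{\ell^p}|\overline{z_j}\varphi|^2\,\mathrm{d}P=2a_j^2\|\varphi\|_{L^2(\ell^p,P)}^2<\infty$ by the product structure of $P$). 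Passing $n\to\infty$ in $\int_{\ell^p}f_j^{(n)}\overline{\varphi}\,\mathrm{d}P=-\int_{\ell^p}f^{(n)}\overline{\delta_j\varphi}\,\mathrm{d}P$—legitimate since $f_j^{(n)}\to f_j$, $f^{(n)}\to f$ in $L^2(\ell^p,P)$ and $\overline{\varphi},\overline{\delta_j\varphi}\in L^2(\ell^p,P)$—then yields the identity, so $f\in D_{\overline{\partial}}$ and $\overline{\partial}f=\sum_j f_j(\mathbf{z})\overline{z_j^1}=g$. The main obstacle is the bookkeeping in the isometry step—decoupling the single convergence $\overline{\partial}f^{(n)}\to g$ in $L^2(\ell^p\times\ell^p,P\times P)$ into the component-wise convergences $f_j^{(n)}\to f_j$ together with $\sum_j 2a_j^2\|f_j\|^2<\infty$—which is exactly what the orthogonality of $\{\overline{z_j^1}\}_j$ buys; everything after that is a routine limiting argument.
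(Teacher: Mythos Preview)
Your proof is correct and follows essentially the same approach as the paper: both arguments hinge on the observation that the map $\{h_j\}\mapsto\sum_j h_j(\mathbf{z})\overline{z_j^1}$ is an isometry from the weighted space $W$ onto a closed subspace $\mathscr{L}$ of $L^2(\ell^p\times\ell^p,P\times P)$, so that convergence of $\overline{\partial}f^{(n)}$ forces componentwise convergence $f_j^{(n)}\to f_j$, after which one passes to the limit in the defining identity. You are more explicit than the paper about density, linearity, and why $\delta_j\varphi\in L^2(\ell^p,P)$, but the substance is the same.
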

\begin{proof}
Choose any sequence $\{f_n\}_{n=1}^{\infty}\subset D_{\overline{\partial}}$ satisfying that $f_n\to f$ in $L^2(\ell^p,P)$ and $\overline{\partial}f_n\to F$ in $L^2(\ell^p\times \ell^p,P\times P)$ for some $F\in L^2(\ell^p\times \ell^p,P\times P)$. It suffices to show that $f\in D_{\overline{\partial}}$ and $\overline{\partial}f=F$.  Note that
\begin{eqnarray*}
\mathscr {L}\equiv\bigg\{G\triangleq \sum\limits_{j=1}^{\infty}g_j(\textbf{z})\overline{z_j^1}:\;\{g_j\}_{j=1}^{\infty}\subset L^2(\ell^p,P)\;\hbox{ and }\;\sum_{j=1}^{\infty}2a_j^2 \int_{\ell^p} |g_j|^2\,\mathrm{d}P<\infty\bigg\}
\end{eqnarray*}
is a closed subspace of $L^2(\ell^p\times \ell^p,P\times P)$, and for any $G\triangleq \sum\limits_{j=1}^{\infty}g_j(\textbf{z})\overline{z_j^1}\in \mathscr {L}$,
 $$
 ||G||_{L^2(\ell^p\times \ell^p,P\times P)}=\sqrt{\sum_{j=1}^{\infty}2a_j^2 \int_{\ell^p} |g_j|^2\,\mathrm{d}P}.
 $$
Clearly, $\{f_n\}_{n=1}^{\infty}\subset D_{\overline{\partial}}$ implies that $\overline{\partial}f_n\in \mathscr {L}$ for each $n\in \mathbb{N}$ and there exists $\{f_n^j\}_{n,j=1}^{\infty}\subset L^2(\ell^p,P)$ such that $\overline{\partial}f_n=\sum\limits_{j=1}^{\infty}f_n^j(\textbf{z})\overline{z_j^1}$, and
\begin{eqnarray}\label{Integration by part}
\int_{\ell^p} f_n^j\cdot\overline{\varphi}\,\mathrm{d}P
=-\int_{\ell^p}f_n \cdot\overline{\delta_j\varphi} \,\mathrm{d}P,\quad\forall\;\varphi\in \mathscr{C}_c^{\infty}.
\end{eqnarray}

By $\overline{\partial}f_n\to F$ in $L^2(\ell^p\times \ell^p,P\times P)$, it follows that $F\in \mathscr {L}$. Hence, there exists $\{F_j\}_{j=1}^{\infty}\subset L^2(\ell^p,P)$ such that $F=\sum\limits_{j=1}^{\infty}F_j(\textbf{z})\overline{z_j^1}$ and
\begin{eqnarray*}
\int_{\ell^p\times\ell^p}|\overline{\partial} f_n-F|^2\,\mathrm{d}(P\times P)=\sum_{j=1}^{\infty}2a_j^2\int_{\ell^p} |  f_n^j-F_j |^2\,\mathrm{d}P,\quad\forall\,n\in\mathbb{N},\,
\end{eqnarray*}
which implies that for each $j\in\mathbb{N}$, $f_n^j\to F_j$ in $L^2(\ell^p,P)$ as $n\to\infty$.
Letting $n\to\infty$ in \eqref{Integration by part}, we obtain that
\begin{eqnarray*}
\int_{\ell^p} F_j\cdot\overline{\varphi}\,\mathrm{d}P
=-\int_{\ell^p}f \cdot\overline{\delta_j\varphi} \,\mathrm{d}P,\quad\forall\;\varphi\in \mathscr{C}_c^{\infty}.
\end{eqnarray*}
Therefore, $f\in D_{\overline{\partial}}$ and $\overline{\partial}f=F$. This completes the proof of Lemma \ref{db is closable}.
\end{proof}

\subsection{An extension of the $\overline{\partial}$ operator to $(s,t)$-forms on $\ell^p$}\label{subsec2.5}
Suppose that $s,t$ are non-negative integers. If $s+t\ge1$, write
$$
(\ell^p)^{s+t}=\underbrace{\ell^p\times\ell^p\times\cdots\times\ell^p}_{s+t\hbox{ \tiny times}}.
$$
Suppose that $I=(i_1,\cdots,i_s)$ and $J=(j_1,\cdots,j_t)$ are multi-indices, where $i_1,\cdots,i_s,$ $j_1,\cdots,j_t\in \mathbb{N}$. For the sequence $\{a_i\}_{i=1}^\infty$ given in Subsection \ref{subsect2.2}, write
\begin{eqnarray}\label{200208t8}
\textbf{a}^{I,J}\triangleq\prod_{l=1}^{s}a_{i_l}^2\cdot \prod_{r=1}^{t}a_{j_r}^2.
\end{eqnarray}
Here, we agree with that $\prod\limits_{l=1}^{0}a_{i_l}^2=1$. Further, suppose that $I=(i_1,\cdots,i_s)$ and $J=(j_1,\cdots,j_t)$ are multi-indices with strictly increasing order, i.e., $i_1<\cdots<i_s$ and $j_1<\cdots<j_t$. We define a complex-valued function $\mathrm{d}z^I\wedge \mathrm{d}\overline{z}^J$ on $(\ell^p)^{s+t}$ by
\begin{eqnarray*}
(\mathrm{d}z^I\wedge \mathrm{d}\overline{z}^J)(\textbf{z}^1,\cdots,\textbf{z}^{s+t})\triangleq \frac{1}{\sqrt{(s+t)!}}\sum_{\sigma\in S_{s+t}}(-1)^{s(\sigma)}\cdot\prod_{k=1}^{s}z_{i_k}^{\sigma_k}\cdot\prod_{l=1}^{t}\overline{z_{j_l}^{\sigma_{s+l}}},\,\,
\end{eqnarray*}
where $\textbf{z}^l=(z_j^l)\in\ell^p,\,\,1\leq l\leq s+t$, $S_{s+t}$ is the permutation group of $\{1,\cdots,s+t\}$ and $s(\sigma)$ is the sign of $\sigma=(\sigma_1,\cdots,\sigma_s,\sigma_{s+1},\cdots,\sigma_{s+t})$, and we agree with that $0!=1$. Suppose that there exist $\{f_{I,J}\}\subset L^2(\ell^p,P)$ such that
\begin{eqnarray}\label{norm condition for st form}
\sum_{|I|=s,|J|=t}'2^{s+t}\textbf{a}^{I,J}\cdot\int_{\ell^p}|f_{I,J}|^2\,\mathrm{d}P<\infty,
\end{eqnarray}
here and henceforth, the sum $\sum'$ is taken only over strictly increasing multi-indices
$I=(i_1,\cdots,i_s)$ and $J=(j_1,\cdots,j_t)$. Now define
\begin{eqnarray}\label{st--form}
f(\textbf{z},\textbf{z}^1,\cdots,\textbf{z}^{s+t})\triangleq \sum_{|I|=s,|J|=t}'f_{I,J}(\textbf{z})(\mathrm{d}z^I\wedge \mathrm{d}\overline{z}^J)(\textbf{z}^1,\cdots,\textbf{z}^{s+t}),
\end{eqnarray}
where $\textbf{z},\,\textbf{z}^l=(z_j^l)\in\ell^p,\,\,1\leq l\leq s+t$. Then $f$ is an element of $L^2\bigg(\!(\ell^p)^{s+t+1},\prod\limits_{k=1}^{s+t+1}P\!\bigg)$ and the quantity in the left hand side of (\ref{norm condition for st form}) is the square of its norm.

Denote by $L^2_{(s,t)}(\ell^p)$ all the functions in $L^2\bigg((\ell^p)^{s+t+1},\prod\limits_{k=1}^{s+t+1}P\bigg)$ given as that of \eqref{st--form}. Each element in $L^2_{(s,t)}(\ell^p)$ is called an {\it $(s,t)$-form on $\ell^p$}. By Proposition \ref{Desity Theorem II}, $L^2_{(s,t)}(\ell^p)$ is a separable Hilbert space. By the definition above, it is easy to see that $L^2_{(s,t)}(\ell^p)$ is also a separable Hilbert space.

Choose $f\in L^2_{(s,t)}(\ell^p)$ in the form of \eqref{st--form}.
Suppose $j\in\mathbb{N}, I=(i_1,\cdots,i_s), J=(j_1,\cdots,j_t)$ and $K=(k_1,\cdots,k_{t+1})$ are multi-indices with strictly increasing order.
If $K\neq J\cup \{j\}$, set $\varepsilon_{j, J}^{K}=0$. If $K= J\cup \{j\}$, we denote the sign of the permutation taking $(j,j_1,\cdots,j_t)$ to $K$ by $\varepsilon_{j, J}^{K}$. If for each $I$ and $K$, there exists $g_{I,K}\in L^2(\ell^p,P)$ such that
\begin{eqnarray}\label{weak def of st form}
\int_{\ell^p}g_{I,K}\cdot\overline{\varphi}\,\mathrm{d}P=-\int_{\ell^p}\sum_{1\leq i<\infty}\sum_{|J|=t}'\varepsilon_{i, J}^{K}f_{I,J}\cdot\overline{\delta_i\varphi} \,\mathrm{d}P,\quad\forall\;\varphi\in \mathscr{C}_c^{\infty},
\end{eqnarray}
and
\begin{eqnarray*}
\sum_{|I|=s,|K|=t+1}'2^{s+t+1}\textbf{a}^{I,K}\cdot \int_{\ell^p}|g_{I,K}|^2\,\mathrm{d}P<\infty,
\end{eqnarray*}
then, we define
\begin{eqnarray}\label{d-dar-f-defi}
\overline{\partial}f(\textbf{z},\textbf{z}^1,\cdots,\textbf{z}^{s+t+1})\triangleq (-1)^s \sum_{|I|=s,|K|=t+1}'g_{I,K}(\textbf{z})(\mathrm{d}z^I\wedge \mathrm{d}\overline{z}^K)(\textbf{z}^1,\cdots,\textbf{z}^{s+t+1}),
\end{eqnarray}
where $\textbf{z}, \textbf{z}^l\in\ell^p,\,\,1\leq l\leq s+t+1$. Clearly,
$$\overline{\partial}f\in L^2_{(s,t+1)}(\ell^p).
 $$
Similarly to the proof of Lemma \ref{db is closable}, we can prove that the operator $\overline{\partial}$ given in the above is a densely defined, closed linear operator from $ L^2_{(s,t)}(\ell^p)$ to $L^2_{(s,t+1)}(\ell^p)$.

We have the following result:

\begin{lemma}\label{lem2.5.1}
Suppose that $f\in L^2_{(s,t)}(\ell^p)$ is in the form of \eqref{st--form} so that $\overline{\partial}f$ can be defined as that in \eqref{d-dar-f-defi}. Then,
\begin{eqnarray}\label{kernal-range-P}
\overline{\partial}(\overline{\partial}f)=0.
\end{eqnarray}
\end{lemma}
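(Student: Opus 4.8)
The plan is to reduce the identity $\dbar(\dbar f)=0$ to the vanishing of certain coefficient functions, and to establish that vanishing by testing against the functions $\varphi\in\mathscr C_c^\infty$ and using the weak definition \eqref{weak def of st form} twice, together with a commutation/symmetry identity for the operators $\delta_i$ (or rather for the iterated testing against them). First I would unwind the definitions: write $f=\sum'_{|I|=s,|J|=t}f_{I,J}\,\mathrm dz^I\wedge\mathrm d\overline z^J$, so that $\dbar f=(-1)^s\sum'_{|I|=s,|K|=t+1}g_{I,K}\,\mathrm dz^I\wedge\mathrm d\overline z^K$ where the $g_{I,K}$ satisfy \eqref{weak def of st form}. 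Applying $\dbar$ again, $\dbar(\dbar f)=(-1)^s(-1)^s\sum'_{|I|=s,|L|=t+2}h_{I,L}\,\mathrm dz^I\wedge\mathrm d\overline z^L=\sum'_{|I|=s,|L|=t+2}h_{I,L}\,\mathrm dz^I\wedge\mathrm d\overline z^L$, where for each fixed strictly increasing $I$ (length $s$) and $L$ (length $t+2$), the coefficient $h_{I,L}\in L^2(\ell^p,P)$ is characterized by
\begin{eqnarray*}
\int_{\ell^p}h_{I,L}\cdot\overline\varphi\,\mathrm dP=-\int_{\ell^p}\sum_{1\le j<\infty}\sum_{|K|=t+1}{}'\varepsilon_{j,K}^{L}\,g_{I,K}\cdot\overline{\delta_j\varphi}\,\mathrm dP,\qquad\forall\,\varphi\in\mathscr C_c^\infty.
\end{eqnarray*}
It suffices to prove $h_{I,L}=0$ for every such $I,L$, since the $\mathrm dz^I\wedge\mathrm d\overline z^L$ are (up to normalization) an orthonormal-type family and the norm of $\dbar(\dbar f)$ is the corresponding weighted $\ell^2$-sum of the $\|h_{I,L}\|^2$.

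Next I would fix $I,L$ and a test function $\varphi$, and expand $h_{I,L}$ by substituting the defining relation for each $g_{I,K}$. The subtlety is that \eqref{weak def of st form} is an identity of $L^2$-functions paired against arbitrary $\varphi\in\mathscr C_c^\infty$, not a pointwise identity, so I cannot literally ``plug in'' $\delta_j\varphi$ as a test function for $g_{I,K}$ unless $\delta_j\varphi\in\mathscr C_c^\infty$, which it is not (it has Gaussian-type growth, not compact support). The cleanest route around this is to work on the finite-dimensional cylindrical level: by the density Propositions \ref{Desity Theorem II} and \ref{smooth dense functions} and the construction in Subsection \ref{subsec2.5}, the coefficients $f_{I,J}$, $g_{I,K}$ can be approximated (in the appropriate weighted norms) by cylindrical functions of class $\mathscr C_c^\infty$ — or, better, I would first prove \eqref{kernal-range-P} for $f$ whose coefficients lie in $\mathscr C_c^\infty$, where everything is a genuine smooth compactly-supported computation on some $\mathbb C^n$, and then pass to the limit using closedness of $\dbar$ (established at the end of Subsection \ref{subsec2.5}). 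For cylindrical smooth $f$, $\dbar(\dbar f)=0$ reduces to the classical finite-dimensional fact $\dbar^2=0$ applied to the Gaussian-weighted forms, i.e., to the symmetry $\partial^2/\partial\overline z_i\partial\overline z_j=\partial^2/\partial\overline z_j\partial\overline z_i$ combined with the antisymmetry built into $\mathrm d\overline z^K$; concretely, in the coefficient $h_{I,L}$ each pair of indices $\{i,j\}\subset L$ contributes two terms with opposite signs $\varepsilon_{j,K}^{L}\varepsilon_{i,J}^{L\setminus\{j\}}=-\varepsilon_{i,K'}^{L}\varepsilon_{j,J}^{L\setminus\{i\}}$, and the corresponding second partials of $f_{I,J}$ agree, so the terms cancel in pairs.

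Then I would carry out the limiting argument: given general $f\in L^2_{(s,t)}(\ell^p)$ in the domain of $\dbar$ with $\dbar f$ in the domain of $\dbar$, choose cylindrical smooth $f^{(n)}$ with $f^{(n)}\to f$ in $L^2_{(s,t)}(\ell^p)$ and $\dbar f^{(n)}\to\dbar f$ in $L^2_{(s,t+1)}(\ell^p)$ (possible because $\dbar$ is closed and densely defined, so its graph is the closure of the graph restricted to a core, and $\mathscr C_c^\infty$-cylindrical forms constitute such a core by the density propositions), and similarly arrange $\dbar(\dbar f^{(n)})\to\dbar(\dbar f)$; since $\dbar(\dbar f^{(n)})=0$ for each $n$ by the cylindrical case, we get $\dbar(\dbar f)=0$. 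The main obstacle I anticipate is precisely the justification that $\mathscr C_c^\infty$-cylindrical $(s,t)$-forms form a core for the composition $\dbar\circ\dbar$ (not just for $\dbar$): one needs that the approximants can be chosen so that \emph{both} $\dbar f^{(n)}\to\dbar f$ \emph{and} $\dbar\dbar f^{(n)}\to\dbar\dbar f$, which requires a little care — most likely one approximates $f$ by truncating the expansion \eqref{st--form} to finitely many multi-indices $I,J$ with all $i_l,j_r\le n$ and mollifying each $f_{I,J}$ on $\mathbb C^n$, and checks directly from \eqref{weak def of st form} that this truncation-plus-mollification commutes with $\dbar$ in the limit, using the weight factors $\mathbf a^{I,J}$ to control tails. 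Once that core statement is in hand, the algebraic cancellation $\dbar^2=0$ on smooth cylindrical forms is routine finite-dimensional bookkeeping, and the lemma follows.
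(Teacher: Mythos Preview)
You correctly pinpoint the central technical obstacle: $\delta_j\varphi$ is not compactly supported, so one cannot literally feed it back into the weak definition \eqref{weak def of st form} for $g_{I,K}$. The algebraic cancellation you sketch (pairing off terms with indices $i,j$ via the antisymmetry of $\varepsilon$ and the symmetry of second derivatives, equivalently of $\delta_i\delta_j$) is also exactly what is needed once that obstacle is overcome.

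However, your proposed workaround --- prove $\dbar^2=0$ on cylindrical smooth forms and then pass to the limit --- has a genuine gap at the step where you assert that ``$\mathscr C_c^\infty$-cylindrical forms constitute such a core by the density propositions''. Propositions \ref{Desity Theorem II} and \ref{smooth dense functions} give only $L^2$-density of $\mathscr D_{(s,t)}$ in $L^2_{(s,t)}(\ell^p)$; they say nothing about density in the graph norm of $\overline\partial$. Whether $\mathscr D_{(s,t)}$ is a core for the weak $\overline\partial$ of Subsection \ref{subsec2.5} is precisely the kind of approximation statement that is delicate in infinite dimensions (compare the paper's Remark \ref{200209rem1}, where the analogous graph-norm approximation for $T^*$ is singled out as unresolved). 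Your subsequent sketch (``truncate the expansion and mollify each $f_{I,J}$ on $\mathbb C^n$'') would have to establish this core property from scratch, and that is substantial additional work, not a routine consequence of what is already in the paper.

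The paper avoids this entirely by approximating the \emph{test function} rather than $f$. It keeps $f$ and the weak coefficients $g_{I,K}$ fixed, chooses a cutoff $\varphi_n=\varsigma\big(\tfrac{1}{n}\sum_{k=1}^{j}|z_k|^2\big)$ with $\varsigma\in C_c^\infty(\mathbb R)$ equal to $1$ near $0$, observes that $\delta_j(\varphi\varphi_n)\in\mathscr C_c^\infty$ so that \eqref{weak def of st form} applies legitimately, and then lets $n\to\infty$ to obtain
\[
\int_{\ell^p}g_{I,K}\cdot\overline{\delta_j\varphi}\,\mathrm dP=-\int_{\ell^p}\sum_{i}\sum_{|J|=t}{}'\varepsilon_{i,J}^{K}\,f_{I,J}\cdot\overline{\delta_i(\delta_j\varphi)}\,\mathrm dP.
\]
From there the sign manipulation $\varepsilon_{j,i,J}^{M}=-\varepsilon_{i,j,J}^{M}$ combined with $\delta_i\delta_j=\delta_j\delta_i$ shows the double sum equals its own negative, hence vanishes. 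This sidesteps any need to approximate $f$ in the graph norm and requires no core statement at all.
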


\begin{proof}
Suppose $I=(i_1,\cdots,i_s)$, $K=(k_1,\cdots,k_{t+1})$ and $M=(m_1,\cdots,m_{t+2})$ are multi-indices with strictly increasing order. We claim that, for each $j\in\mathbb{N}$ and $\varphi\in \mathscr{C}_c^{\infty}$,
\begin{eqnarray}\label{noncompact test}
\int_{\ell^p}g_{I,K}\cdot\overline{\delta_j\varphi}\,\mathrm{d}P
=-\int_{\ell^p}\sum_{1\leq i<\infty}\sum_{|J|=t}'\varepsilon_{i, J}^{K}f_{I,J} \cdot\overline{\delta_i(\delta_j\varphi)} \,\mathrm{d}P.
\end{eqnarray}
Here we should note that $\delta_j\varphi$ (defined by \eqref{delta-j}) may not lie in $\mathscr{C}_c^{\infty}$. Because of this, we choose a function $\varsigma\in C_c^\infty(\mathbb{R})$ so that $\varsigma\equiv 1$ on $[-1,1]$. For each $n\in\mathbb{N}$, write $\varphi_n\equiv \varsigma\big(\frac{\sum^{j}_{k=1}|z_k|^2}{n}\big)$. Then, $\delta_j(\varphi\varphi_n)\in \mathscr{C}_c^{\infty}$. Applying (\ref{weak def of st form}), we arrive at
\begin{eqnarray*}
\int_{\ell^p}g_{I,K}\cdot\overline{\delta_j(\varphi\varphi_n)}\,\mathrm{d}P
=-\int_{\ell^p}\sum_{1\leq i<\infty}\sum_{|J|=t}'\varepsilon_{i, J}^{K}f_{I,J}\cdot\overline{\delta_i(\delta_j(\varphi\varphi_n))} \,\mathrm{d}P.
\end{eqnarray*}
Letting $n\to\infty$ in the above, we obtain \eqref{noncompact test}.

Thanks to \eqref{noncompact test}, it follows that
\begin{eqnarray*}
&&-\int_{\ell^p}\sum_{1\leq j<\infty}\sum_{|K|=t+1}'\varepsilon_{j, K}^{M}g_{I,K}\cdot\overline{\delta_j\varphi}\,\mathrm{d}P\\
&=&\sum_{1\leq i,j<\infty}\sum_{|K|=t+1}'\sum_{|J|=t}'\varepsilon_{i, J}^{K}\cdot\varepsilon_{j, K}^{M}\cdot\int_{\ell^p}f_{I,J}\cdot\overline{\delta_i(\delta_j\varphi)}\,\mathrm{d}P\\
&=&\sum_{1\leq i,j<\infty}\sum_{|K|=t+1}'\sum_{|J|=t}'\varepsilon_{j,i, J}^{j,K}\cdot\varepsilon_{j, K}^{M}\cdot\int_{\ell^p}f_{I,J}\cdot\overline{\delta_i(\delta_j\varphi)}\,\mathrm{d}P\\
&=&\sum_{1\leq i,j<\infty}\sum_{|J|=t}'\varepsilon_{j,i, J}^{M}\cdot\int_{\ell^p}f_{I,J}\cdot\overline{\delta_i(\delta_j\varphi)}\,\mathrm{d}P\\
&=&-\sum_{1\leq i,j<\infty}\sum_{|J|=t}'\varepsilon_{i,j, J}^{M}\cdot\int_{\ell^p}f_{I,J}\cdot\overline{\delta_i(\delta_j\varphi)}\,\mathrm{d}P\\
&=&-\sum_{1\leq i,j<\infty}\sum_{|J|=t}'\varepsilon_{i,j, J}^{M}\cdot\int_{\ell^p}f_{I,J}\cdot\overline{\delta_j(\delta_i\varphi)}\,\mathrm{d}P\\
&=&-\sum_{1\leq i,j<\infty}\sum_{|J|=t}'\varepsilon_{j,i, J}^{M}\cdot\int_{\ell^p}f_{I,J}\cdot\overline{\delta_i(\delta_j\varphi)}\,\mathrm{d}P,
\end{eqnarray*}
where the fourth equality follows from the fact that $\varepsilon_{j,i, J}^{M}=-\varepsilon_{i,j, J}^{M}$. The above equalities imply that
\begin{eqnarray*}
-\int_{\ell^p}\sum_{1\leq j<\infty}\sum_{|K|=t+1}'\varepsilon_{j, K}^{M}g_{I,K}\cdot\overline{\delta_j\varphi}\,\mathrm{d}P=0.
\end{eqnarray*}
The arbitrariness of $I$ and $M$ implies the desired property \eqref{kernal-range-P}. This complete the proof of Lemma \ref{lem2.5.1}.
\end{proof}

\section{$L^2$ estimate for smooth $(s,t+1)$-forms on $\ell^p$}
Suppose that $s,t$ are non-negative integers. Denote the $\overline{\partial }$ operator from $L^2_{(s,t)}(\ell^p)$ into $L^2_{(s,t+1)}(\ell^p)$ by $T$ and the $\overline{\partial }$ operator from $L^2_{(s,t+1)}(\ell^p)$ into $L^2_{(s,t+2)}(\ell^p)$ by $S$. Clearly, Lemmas \ref{db is closable} and \ref{lem2.5.1} imply that $R_T\subset N_S$ and $N_S$ is a closed subspace of $L^2_{(s,t+1)}(\ell^p)$.  We will prove that $R_T=N_S$ in Section \ref{Sec5}.  Write
\begin{eqnarray*}
\mathscr{D}_{(s,t)}\triangleq \bigcup_{n=1}^{\infty}\Bigg\{f= \sum_{|I|=s,|J|=t,\,\max(I\cup J)\leq n}'f_{I,J}\,\mathrm{d}z^I\wedge \mathrm{d}\overline{z}^J:\;\,\,f_{I,J}\in \mathscr{C}_c^{\infty}\Bigg\}.
\end{eqnarray*}
Then $\mathscr{D}_{(s,t)}$ is dense in $L^2_{(s,t)}(\ell^p)$.

We have the following $L^2$ estimate for $(s,t+1)$-forms in $\mathscr{D}_{(s,t+1)}$:

\begin{theorem}\label{200207t1}
It holds that
\begin{eqnarray}\label{202002071}
||f||_{L^2_{(s,t+1)}(\ell^p)}^2\leq ||T^*f||_{L^2_{(s,t)}(\ell^p)}^2
+||Sf||_{L^2_{(s,t+2)}(\ell^p)}^2,\quad \forall\;f\in  \mathscr{D}_{(s,t+1)}.
\end{eqnarray}
\end{theorem}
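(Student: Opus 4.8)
The plan is to reduce \eqref{202002071} to the classical Bochner--Kodaira--Morrey--H\"ormander \emph{a priori} inequality on $\mathbb{C}^n$ with a Gaussian weight. Since every $f\in\mathscr{D}_{(s,t+1)}$ has coefficients $f_{I,K}\in\mathscr{C}_c^\infty$, there is an $n\in\mathbb{N}$ so that all of them depend only on the coordinates $z_1,\dots,z_n$. By Remark \ref{identify remark}, $\|f\|_{L^2_{(s,t+1)}(\ell^p)}^2=\sum_{|I|=s,|K|=t+1}'2^{s+t+1}\textbf{a}^{I,K}\int_{\mathbb{C}^n}|f_{I,K}|^2\,\mathrm{d}\mathcal{N}^n$; and computing $T^*$ on smooth compactly supported forms through the integration-by-parts formula that uses the operators $\delta_i$ of \eqref{delta-j} (this simultaneously shows $f\in D_{T^*}$), one checks that $Sf$ and $T^*f$ are again finitely supported, smooth, compactly supported forms --- only indices $i\le n$ occur, because $\varepsilon^K_{i,J}\neq0$ forces $i\in K\subseteq\{1,\dots,n\}$. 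Hence all three quantities in \eqref{202002071} are $L^2$-norms over $(\mathbb{C}^n,\mathcal{N}^n)$, whose density is a constant multiple of $e^{-\psi}$ with $\psi(z)=\sum_{i=1}^n|z_i|^2/(2a_i^2)$, and it suffices to prove \eqref{202002071} there.

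Next I would diagonalize the weight via the rescaling $z_i=a_iw_i$ ($1\le i\le n$), which carries $\mathcal{N}^n$ to the standard Gaussian $\mathcal{N}_1^n$, i.e.\ to the weight $\psi_1(w)=\frac{1}{2}\sum_{i=1}^n|w_i|^2$ with constant complex Hessian $\partial^2\psi_1/\partial w_j\partial\bar w_k=\frac{1}{2}\delta_{jk}$. Using $\mathrm{d}z^I\wedge\mathrm{d}\bar z^K=\sqrt{\textbf{a}^{I,K}}\,\mathrm{d}w^I\wedge\mathrm{d}\bar w^K$, $\partial_{\bar z_i}=a_i^{-1}\partial_{\bar w_i}$ and $\delta_i=a_i^{-1}(\partial_{w_i}-\bar w_i/2)$, the substitution $f_{I,K}(z)\mapsto g_{I,K}(w):=\sqrt{\textbf{a}^{I,K}}\,f_{I,K}(a_1w_1,\dots,a_nw_n)$ turns $f$, $Sf$, $T^*f$ into the coefficient tuples $g=(g_{I,K})$, $\overline{\partial}g$ and $2\vartheta g$ respectively, where $\overline{\partial}$ is the usual operator on $(s,\cdot)$-forms over $\mathbb{C}^n$ (with the sign $(-1)^s$ of \eqref{d-dar-f-defi}) and $\vartheta$ is its formal adjoint with respect to $\mathcal{N}_1^n$; writing $\|h\|^2:=\sum_{|I|=s,|K|=t+1}'\int_{\mathbb{C}^n}|h_{I,K}|^2\,\mathrm{d}\mathcal{N}_1^n$ (with the obvious modifications for $(s,t)$- and $(s,t+2)$-tuples), one obtains
\[
\|f\|_{L^2_{(s,t+1)}(\ell^p)}^2=2^{s+t+1}\|g\|^2,\qquad \|Sf\|_{L^2_{(s,t+2)}(\ell^p)}^2=2^{s+t+2}\|\overline{\partial}g\|^2,\qquad \|T^*f\|_{L^2_{(s,t)}(\ell^p)}^2=2^{s+t+2}\|\vartheta g\|^2 .
\]
The single extra factor $2$ in the last two identities (relative to the first) comes exactly from the relation $\textbf{a}^{I,K}=a_i^2\,\textbf{a}^{I,K\setminus\{i\}}$ among the weights that define the $L^2_{(s,t)}$, $L^2_{(s,t+1)}$ and $L^2_{(s,t+2)}$ norms, and it is what makes the constants balance below.

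Finally I would apply the Morrey--Kohn--H\"ormander identity on $\mathbb{C}^n$, which for compactly supported smooth forms on the whole space is a plain integration by parts and needs no further hypothesis (pseudoconvexity of $\mathbb{C}^n$ is automatic): for $g$ as above,
\[
\|\overline{\partial}g\|^2+\|\vartheta g\|^2\ \ge\ \sum_{|I|=s,|L|=t}'\sum_{j,k}\int_{\mathbb{C}^n}\frac{\partial^2\psi_1}{\partial w_j\partial\bar w_k}\,g_{I,jL}\,\overline{g_{I,kL}}\,\mathrm{d}\mathcal{N}_1^n=\frac{1}{2}\sum_{|I|=s,|L|=t}'\sum_{j}\int_{\mathbb{C}^n}|g_{I,jL}|^2\,\mathrm{d}\mathcal{N}_1^n=\frac{t+1}{2}\|g\|^2,
\]
the last equality because each $|g_{I,K}|^2$ with $|K|=t+1$ is counted $t+1$ times. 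Combining this with the three displayed identities, $\|T^*f\|^2+\|Sf\|^2=2^{s+t+2}\bigl(\|\vartheta g\|^2+\|\overline{\partial}g\|^2\bigr)\ge 2^{s+t+1}(t+1)\|g\|^2\ge 2^{s+t+1}\|g\|^2=\|f\|^2$, since $t+1\ge1$, which is precisely \eqref{202002071}. The main obstacle will not be analytic depth --- pseudoconvexity of $\mathbb{C}^n$ is free and the Gaussian weight is strictly plurisubharmonic --- but the bookkeeping: identifying the Hilbert-space adjoint $T^*$ on $\mathscr{D}_{(s,t+1)}$ with the finite-dimensional weighted $\vartheta$ (keeping track of the signs $\varepsilon^K_{i,J}$, the factor $(-1)^s$, and the normalizing $1/\sqrt{(s+t)!}$ built into $\mathrm{d}z^I\wedge\mathrm{d}\bar z^K$), and checking that the rescaling produces exactly the stated powers of $2$, so that the single surviving factor $t+1\ge1$ closes the estimate with constant $1$ and makes it sharp on $(s,1)$-forms.
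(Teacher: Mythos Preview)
Your approach is correct and leads to the same final identity
\[
\|T^*f\|^2+\|Sf\|^2=(t+1)\|f\|^2+\text{(nonnegative term)}\ge\|f\|^2
\]
that the paper obtains; the underlying mechanism is the Bochner--Kodaira--Morrey--H\"ormander commutator $[\delta_k,\partial_{\bar z_j}]=\frac{1}{2a_k^2}\delta_{jk}$ in both cases. One minor slip in your write-up: your stated reason that $T^*f$ involves only indices $\le n$ is not quite ``$\varepsilon^K_{i,J}\ne0\Rightarrow i\in K$''; rather, the nonzero terms in $T^*f$ have $i\in J$ with $|J|=t+1$ a coefficient index of $f$, and $\max J\le n$ by hypothesis forces $i\le n$. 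The conclusion stands.

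The difference from the paper's proof is one of packaging. The paper works directly in the weighted $(\ell^p,P)$ norms: it computes $T^*f$ explicitly via the pairing $(f,Tu)$, expands $\|T^*f\|^2+\|Sf\|^2$ as a double sum in $i,j$, regroups using the antisymmetry $\varepsilon^{j,K}_{i,L}=-\varepsilon^{j,J}_L\varepsilon^K_{i,J}$, and then applies the integration-by-parts formula \eqref{22.1.8'} and the commutator \eqref{22.1.8} \emph{in situ} to collapse the cross terms to $(t+1)\|f\|^2$ plus the nonnegative $\|\bar\partial f_{I,K}\|^2$ terms. You instead first rescale $z_i=a_iw_i$ to absorb the weights $\textbf{a}^{I,K}$ into the coefficients, reducing to the standard Gaussian on $\mathbb{C}^n$ with Hessian $\tfrac12\mathrm{Id}$, and then invoke the classical H\"ormander inequality as a black box. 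Your route is shorter and makes transparent \emph{why} the constant is exactly $t+1$ (it is the smallest eigenvalue of the curvature acting on $(s,t+1)$-forms when the Hessian is $\tfrac12\mathrm{Id}$, up to the factor of $2$ you track), while the paper's direct computation is self-contained and reveals exactly which identities drive the estimate --- useful later when the paper redoes essentially the same calculation on $\mathbb{C}^n$ in Theorem~\ref{200208t1}.
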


\begin{proof}We borrow some idea from H\"omander \cite{Hor65}. Before proceeding, we need two simple equalities (Recall \eqref{delta-j} for $\delta_j$):
\begin{eqnarray}
\int_{\ell^p}\frac{\partial \varphi}{\partial \overline{z_j}}\cdot\overline{\psi} \,\mathrm{d}P
=-\int_{\ell^p}\varphi\cdot\overline{\delta_j\psi}\,\mathrm{d}P,\,\,\quad\forall\,j\in\mathbb{N},\,\,\,\varphi,\psi\in\mathscr{C}_c^{\infty},\label{22.1.8'}
\end{eqnarray}
and
\begin{eqnarray}
\bigg(\delta_k\frac{\partial  }{\partial \overline{z_j}}-\frac{\partial  }{\partial \overline{z_j}}\delta_k\bigg)w=\frac{w}{2a_k^2}\frac{\partial\overline{z_k}}{\partial \overline{z_j}} ,\,\,\quad\forall\,k,j\in\mathbb{N},\,\,\,w\in \mathscr{C}_c^{\infty}.\label{22.1.8}
\end{eqnarray}

Suppose that $f\in \mathscr{D}_{(s,t+1)}$ and $u\in D_{T}(\subset L^2_{(s,t)}(\ell^p))$ are as follows:
\begin{eqnarray*}
f= \sum_{|I|=s,|J|=t+1}'f_{I,J}\,\mathrm{d}z^I\wedge \mathrm{d}\overline{z}^J,
\quad\,\,u= \sum_{|I|=s,|K|=t}'u_{I,K}\,\mathrm{d}z^I\wedge \mathrm{d}\overline{z}^K
\end{eqnarray*}
and $Tu= (-1)^s\sum\limits_{|I|=s,|J|=t+1}'g_{I,J}\,\mathrm{d}z^I\wedge \mathrm{d}\overline{z}^J$ such that
\begin{eqnarray}\label{2weak def of st form on Cn}
\int_{\ell^p}g_{I,J}\overline{\varphi}\,\mathrm{d}P=-\int_{\ell^p}\sum_{1\leq j<\infty}\sum_{|K|=t}'\varepsilon_{j, K}^{J}u_{I,K}\cdot\overline{\delta_j\varphi} \,\mathrm{d}P,\quad\,\forall\;\varphi\in \mathscr{C}_c^{\infty}.
\end{eqnarray}
Then,
\begin{eqnarray*}
&&(f,T u)_{L^2_{(s,t+1)}(\ell^p)}\\
&=&(-1)^s\cdot 2^{s+t+1}\cdot\sum_{|I|=s,|J|=t+1}' \textbf{a}^{I,J} \cdot\int_{\ell^p}f_{I,J}\cdot\overline{g_{I,J}}\,\mathrm{d}P\\
&=&(-1)^{s-1}\cdot 2^{s+t+1}\cdot\sum_{|I|=s,|J|=t+1}' \textbf{a}^{I,J}\cdot \int_{\ell^p}\sum_{1\leq j<\infty}\sum_{|K|=t}'\varepsilon_{j, K}^{J}\cdot\delta_j(f_{I,J})\cdot\overline{u_{I,K}} \,\mathrm{d}P\\
&=&(-1)^{s-1}\cdot 2^{s+t}\cdot\sum_{|I|=s,|K|=t}'\textbf{a}^{I,K}\cdot\int_{\ell^p} \sum_{1\leq j<\infty}\sum_{|J|=t+1}'2a_j^2\cdot\varepsilon_{j,K}^{J}\cdot \delta_j(f_{I,J})\cdot\overline{ u_{I,K}}\,\mathrm{d}P.
\end{eqnarray*}
Note that if $J\neq \{j\}\cup K$ then $\varepsilon_{j,K}^{J}=0$. Therefore, if $j\in K$ we let $f_{I,jK}\triangleq0$ and if $j\notin K$, there exists a unique multi-index $J$ with strictly order such that $|J|=t+1$ and $J= \{j\}\cup K$, we let $f_{I,jK}\triangleq\varepsilon_{j,K}^{J}\cdot f_{I,J}$. Therefore,
\begin{eqnarray*}
(f,T u)_{L^2_{(s,t+1)}(\ell^p)}
&=&(-1)^{s-1}\cdot 2^{s+t}\cdot\sum_{|I|=s,|K|=t}'\textbf{a}^{I,K}\cdot\int_{\ell^p} \sum_{1\leq j<\infty} 2a_j^2\delta_j(f_{I,jK})\cdot\overline{ u_{I,K}}\,\mathrm{d}P.
\end{eqnarray*}
Then $f\in D_{T^*}$ and
\begin{eqnarray*}
T^*f&=&(-1)^{s-1}\sum_{|I|=s,|K|=t}'\sum_{1\leq j<\infty}2a_j^2\delta_j(f_{I,jK})\mathrm{d}z^I\wedge \mathrm{d}\overline{z}^K.
\end{eqnarray*}
Thus
\begin{eqnarray*}
||T^*f||_{L^2_{(s,t)}(\ell^p)}^2
\!\!&=&2^{s+t+2}\sum_{|I|=s,|K|=t}'\textbf{a}^{I,K}\int_{\ell^p} \sum_{1\leq j<\infty}a_j^2\delta_j(f_{I,jK})\cdot\overline{ \sum_{1\leq i<\infty}a_i^2\delta_i(f_{I,iK})}\,\mathrm{d}P\\
&=&2^{s+t+2}\sum_{|I|=s,|K|=t}'\textbf{a}^{I,K} \sum_{1\leq i,j<\infty}\int_{\ell^p}a_j^2\delta_j(f_{I,jK})\cdot\overline{ a_i^2\delta_i(f_{I,iK})}\,\mathrm{d}P.
\end{eqnarray*}

On the other hand, combining (\ref{weak def of st form}), (\ref{d-dar-f-defi}) and (\ref{22.1.8'}) gives
\begin{eqnarray}\label{formula of d-bar}
Sf=(-1)^s\sum_{|I|=s,|M|=t+2}'\sum_{1\leq j<\infty}\sum_{|J|=t+1}'\varepsilon_{j,J}^{M}\cdot\frac{\partial f_{I,J}}{\partial \overline{z_j}}\,\mathrm{d}z^I\wedge \mathrm{d}\overline{z}^M.
\end{eqnarray}
Hence,
\begin{eqnarray*}
&&||Sf||_{L^2_{(s,t+2)}(\ell^p)}^2\\
&=&2^{s+t+2}\cdot\sum_{|I|=s,|M|=t+2}'\textbf{a}^{I,M}\cdot\int_{\ell^p}\bigg(\sum_{1\leq j<\infty}\sum_{|K|=t+1}'\varepsilon_{j,K}^{M}\cdot\frac{\partial f_{I,K}}{\partial \overline{z_j}}\\
&&\qquad\qquad\qquad\qquad\qquad\qquad\qquad\qquad\qquad\cdot\;\overline{\sum_{1\leq i<\infty}\sum_{|L|=t+1}'\varepsilon_{i,L}^{M}\cdot\frac{\partial f_{I,L}}{\partial \overline{z_i}}}\;\bigg)\,\mathrm{d}P\\
&=&2^{s+t+2}\sum_{|I|=s,|M|=t+2}'\sum_{1\leq i, j<\infty}\sum_{|K|=t+1}'\sum_{|L|=t+1}'\textbf{a}^{I,M}\int_{\ell^p}\varepsilon_{j,K}^{M}\varepsilon_{i,L}^{M}\frac{\partial f_{I,K}}{\partial \overline{z_j}}\cdot\overline{\frac{\partial f_{I,L}}{\partial \overline{z_i}}}\,\mathrm{d}P.
\end{eqnarray*}
Note that $\varepsilon_{j,K}^{M}\cdot\varepsilon_{i,L}^{M}\neq 0$ if and only if $j\notin K,\,i\notin L$ and $M=\{j\}\cup K=\{i\}\cup L$, in this case $\varepsilon_{j,K}^{M}\cdot\varepsilon_{i,L}^{M}=\varepsilon_{i,L}^{j,K}$. Thus we have
\begin{equation}\label{norm of Sf}
\begin{array}{ll}
\displaystyle||Sf||_{L^2_{(s,t+2)}(\ell^p)}^2\\[3mm]\displaystyle
=2^{s+t+2}\cdot\sum_{|I|=s,|K|=t+1,|L|=t+1}'\sum_{1\leq i,j<\infty}\textbf{a}^{I,K}\cdot a_j^2\cdot\int_{\ell^p}\varepsilon_{i,L}^{j,K}\frac{\partial f_{I,K}}{\partial \overline{z_j}}\cdot\overline{\frac{\partial f_{I,L}}{\partial \overline{z_i}}}\,\mathrm{d}P.
\end{array}
\end{equation}
If $i=j$ then, $\varepsilon_{i,L}^{j,K}\neq 0$ if and only if $i\notin K$ and $K=L$. Hence, the corresponding part of (\ref{norm of Sf}) is
\begin{eqnarray*}
2^{s+t+2}\cdot\sum_{|I|=s,|K|=t+1}'\sum_{ i\notin K}\textbf{a}^{I,K}\cdot a_i^2\cdot\int_{\ell^p}\bigg|\frac{\partial f_{I,K}}{\partial \overline{z_i}}\bigg|^2\,\mathrm{d}P.
\end{eqnarray*}
Also, if $i\neq j$ then, $\varepsilon_{i,L}^{j,K}\neq 0$ if and only if $i\in K,\,j\in L$ and $i\notin L,\,j\notin K$. Then there exists multi-index $J$ with strictly increasing order such that $|J|=t$ and $J\cup\{i,j\}=\{i\}\cup L=\{j\}\cup K$. Since $\varepsilon_{i,L}^{j,K}=\varepsilon_{i,L}^{i,j,J}\cdot\varepsilon_{i,j,J}^{j,i,J}\cdot\varepsilon_{j,i,J}^{j,K}
=-\varepsilon_{L}^{j,J}\cdot\varepsilon_{i,J}^{K}$, the rest part of (\ref{norm of Sf}) is
\begin{eqnarray*}
&&-2^{s+t+2}\sum_{|I|=s,|K|=t+1,|L|=t+1,|J|=t}'\sum_{i\neq j,\,1\leq i,j<\infty}\textbf{a}^{I,K} a_j^2\int_{\ell^p}\varepsilon_{L}^{j,J}\varepsilon_{i,J}^{K}\frac{\partial f_{I,K}}{\partial \overline{z_j}}\cdot\overline{\frac{\partial f_{I,L}}{\partial \overline{z_i}}}\,\mathrm{d}P\\
&=&-2^{s+t+2}\sum_{|I|=s,|K|=t+1,|L|=t+1,|J|=t}'\sum_{i\neq j,\,1\leq i,j<\infty}\textbf{a}^{I,K} a_j^2\int_{\ell^p}\varepsilon_{i,J}^{K}\frac{\partial f_{I,K}}{\partial \overline{z_j}}\cdot\overline{\varepsilon_{L}^{j,J}\frac{\partial f_{I,L}}{\partial \overline{z_i}}}\,\mathrm{d}P\\
&=&-2^{s+t+2}\sum_{|I|=s,|J|=t}'\sum_{ i\neq j}\textbf{a}^{I,J} a_j^2 a_i^2\int_{\ell^p}\frac{\partial f_{I,iJ}}{\partial \overline{z_j}}\cdot\overline{\frac{\partial f_{I,jJ}}{\partial \overline{z_i}}}\,\mathrm{d}P.
\end{eqnarray*}
Therefore, we have
\begin{eqnarray*}
&&||T^*f||_{L^2_{(s,t)}(\ell^p)}^2
+
||Sf||_{L^2_{(s,t+2)}(\ell^p)}^2\\
&=&2^{s+t+2}\cdot\sum_{|I|=s,|J|=t}'\sum_{1\leq i,j<\infty} \textbf{a}^{I,J}\cdot a_j^2\cdot a_i^2\cdot\int_{\Omega}\delta_j(f_{I,jJ})\cdot\overline{\delta_i(f_{I,iJ})}\,\mathrm{d}P\\
&&+2^{s+t+2}\cdot\sum_{|I|=s,|K|=t+1}'\sum_{ i\notin K}\textbf{a}^{I,K}\cdot a_i^2\cdot\int_{\ell^p}\bigg|\frac{\partial f_{I,K}}{\partial \overline{z_i}}\bigg|^2\,\mathrm{d}P\\
&&-2^{s+t+2}\cdot\sum_{|I|=s,|J|=t}'\sum_{ i\neq j}\textbf{a}^{I,J}\cdot a_j^2\cdot a_i^2\cdot\int_{\ell^p}\frac{\partial f_{I,iJ}}{\partial \overline{z_j}}\cdot\overline{\frac{\partial f_{I,jJ}}{\partial \overline{z_i}}}\,\mathrm{d}P\\
&=&2^{s+t+2}\cdot\sum_{|I|=s,|J|=t}'\sum_{1\leq i,j<\infty} \textbf{a}^{I,J}\cdot a_j^2\cdot a_i^2\cdot\int_{\Omega}\delta_j(f_{I,jJ})\cdot\overline{ \delta_i(f_{I,iJ})}\,\mathrm{d}P\\
&&+2^{s+t+2}\sum_{|I|=s,|K|=t+1}'\sum_{ 1\leq i<\infty}\textbf{a}^{I,K}\cdot a_i^2\cdot\int_{\ell^p}\bigg|\frac{\partial f_{I,K}}{\partial \overline{z_i}}\bigg|^2\,\mathrm{d}P\\
&&-2^{s+t+2}\cdot\sum_{|I|=s,|J|=t}'\sum_{1\leq i,j<\infty}\textbf{a}^{I,J}\cdot a_j^2\cdot a_i^2\cdot\int_{\ell^p}\frac{\partial f_{I,iJ}}{\partial \overline{z_j}}\cdot\overline{\frac{\partial f_{I,jJ}}{\partial \overline{z_i}}}\,\mathrm{d}P\\
&=&2^{s+t+2}\cdot\sum_{|I|=s,|J|=t}' \sum_{1\leq i,j<\infty}\textbf{a}^{I,J}\cdot a_j^2\cdot a_i^2\cdot\int_{\ell^p}\Bigg(\delta_j(f_{I,jJ})\cdot\overline{ \delta_i(f_{I,iJ})}\\
&&\qquad\qquad\qquad\qquad\qquad\qquad\qquad\qquad\qquad\qquad\qquad-\frac{\partial f_{I,jJ}}{\partial \overline{z_i}}\cdot\overline{\frac{\partial f_{I,iJ}}{\partial \overline{z_j}}}\Bigg)\,\mathrm{d}P\\
&&+2^{s+t+2}\sum_{|I|=s,|K|=t+1}'\sum_{ 1\leq i<\infty}\textbf{a}^{I,K}\cdot a_i^2\cdot\int_{\ell^p}\bigg|\frac{\partial f_{I,K}}{\partial \overline{z_i}}\bigg|^2\,\mathrm{d}P,
\end{eqnarray*}
where the second equality follows from
\begin{eqnarray*}
2^{s+t+2}\sum_{|I|=s,|K|=t+1}'\sum_{ i\in K}\textbf{a}^{I,K}\cdot a_i^2\cdot\int_{\ell^p}\bigg|\frac{\partial f_{I,K}}{\partial \overline{z_i}}\bigg|^2\,\mathrm{d}P\\
=2^{s+t+2}\sum_{|I|=s,|J|=t}'\sum_{1\leq i <\infty}\textbf{a}^{I,J}\cdot  a_i^4\cdot\int_{\ell^p}\bigg|\frac{\partial f_{I,iJ}}{\partial \overline{z_i}}\bigg|^2\,\mathrm{d}P,
\end{eqnarray*}
and the third equality follows from
\begin{eqnarray*}
&&-2^{s+t+2}\cdot\sum_{|I|=s,|J|=t}'\sum_{1\leq i,j<\infty}\textbf{a}^{I,J}\cdot a_j^2\cdot a_i^2\cdot\int_{\ell^p}\frac{\partial f_{I,iJ}}{\partial \overline{z_j}}\cdot\overline{\frac{\partial f_{I,jJ}}{\partial \overline{z_i}}}\,\mathrm{d}P\\
&=&-2^{s+t+2}\cdot\sum_{|I|=s,|J|=t}' \sum_{1\leq i,j<\infty}\textbf{a}^{I,J}\cdot a_j^2\cdot a_i^2\cdot\int_{\ell^p}\frac{\partial f_{I,jJ}}{\partial \overline{z_i}}\cdot\overline{\frac{\partial f_{I,iJ}}{\partial \overline{z_j}}}\,\mathrm{d}P.
\end{eqnarray*}
In view of $(\ref{22.1.8'})$ and $(\ref{22.1.8})$, we conclude that
\begin{eqnarray*}
&&||T^*f||_{L^2_{(s,t)}(\ell^p)}^2
+
||Sf||_{L^2_{(s,t+2)}(\ell^p)}^2\\
&=&2^{s+t+2}\sum_{|I|=s,|J|=t}' \sum_{1\leq i,j<\infty}\textbf{a}^{I,J}\cdot a_j^2\cdot a_i^2\cdot\int_{\ell^p}\frac{1}{2a_j^2}\cdot \frac{\partial\overline{z_j}}{\partial \overline{z_i}}\cdot f_{I,jJ}\cdot\overline{f_{I,iJ}}\,\mathrm{d}P\\
&&+2^{s+t+2}\sum_{|I|=s,|K|=t+1}'\sum_{ 1\leq i<\infty}\textbf{a}^{I,K}\cdot a_i^2\cdot\int_{\ell^p}\bigg|\frac{\partial f_{I,K}}{\partial \overline{z_i}}\bigg|^2\,\mathrm{d}P\\
&=&2^{s+t+1}\sum_{|I|=s,|J|=t}' \sum_{1\leq i<\infty}\textbf{a}^{I,J}\cdot a_i^2\cdot\int_{\ell^p}|f_{I,iJ}|^2\,\mathrm{d}P\\
&&+2^{s+t+1}\sum_{|I|=s,|K|=t+1}'\textbf{a}^{I,K}\cdot ||\overline{\partial}f_{I,K}||_{L^2_{(0,1)}(\ell^p)}^2\\
&=&(t+1)\cdot||f||_{L^2_{(s,t+1)}(\ell^p)}^2+2^{s+t+1}\cdot\sum_{|I|=s,|K|=t+1}'\textbf{a}^{I,K}\cdot ||\overline{\partial}f_{I,K}||_{L^2_{(0,1)}(\ell^p)}^2\\
&\geq & ||f||_{L^2_{(s,t+1)}(\ell^p)}^2.
\end{eqnarray*}
which gives the desired $L^2$ estimate \eqref{202002071}. This completes the proof of Theorem \ref{200207t1}.
\end{proof}

\begin{remark}\label{200209rem1}
In order to solve the $\overline{\partial}$ equation \eqref{d-bar equation} on $\ell^p$, or equivalently, to prove that $R_{T}=N_{S}$, by the conclusion 1) in Corollary \ref{estimation lemma} we need to improve the estimate \eqref{202002071} in Theorem \ref{200207t1} as follows:
\begin{eqnarray}\label{200209tt1}
||f||_{L^2_{(s,t+1)}(\ell^p)}^2\leq ||T^*f||_{L^2_{(s,t)}(\ell^p)}^2+||Sf||_{L^2_{(s,t+2)}(\ell^p)}^2,\quad\forall\,f\in D_{S}\cap D_{T^*}.
\end{eqnarray}
In the case of finite dimensions, a similar improvement was achieved in the work \cite{Hor65} by means of the classical Friedrichs mollifier technique. Unfortunately, this mollifier technique uses essentially the translation invariance of Lebesgue measure, and therefore it cannot be employed directly to prove \eqref{200209tt1}. Also, at this moment, it is unclear for us whether the estimate \eqref{200209tt1} holds or not.
\end{remark}

\section{Solving the $\overline{\partial}$ equation uniformly on $\mathbb{C}^n$}

The main goal in this section is to solve the $\overline{\partial}$ equation \eqref{d-bar equation} uniformly in finite dimensions.

For each $n\in \mathbb{N}$ and nonnegative integers $s$ and $t$ which are less than or equal to $n$, very similarly to that for $L^2_{(s,t)}(\ell^p)$, we may define $L^2_{(s,t)}(\mathbb{C}^n)$ and their norms (and hence we omit the details).
By a similar argument, we can define the $\overline{\partial}$ operators on $\mathbb{C}^n$. The differences between this case and that on $\ell^p$  are the following:

\bigno
\textbf{1.} We use the measure $\mathcal {N}^n$ instead of $P$;\\
\textbf{2.} We choose $\mathscr{C}_c^{\infty}(\mathbb{C}^n)$ instead of $\mathscr {C}_c^{\infty}$ as the test functions;\\
\textbf{3.} We use the notation $L^2_{(s,t)}(\mathbb{C}^n)$ instead of $L^2_{(s,t)}(\ell^p)$ to denote all the $(s,t)$-forms on $\mathbb{C}^n$, where $0\le s,t\le n$. Similar to Remark \ref{identify remark}, $L^2_{(s,t)}(\mathbb{C}^n)$ can be naturally identified as a closed subspace of $L^2_{(s,t)}(\ell^p)$;\\
\textbf{4.} Suppose $I=(i_1,\cdots,i_s)$ and $J=(j_1,\cdots,j_t)$ are multi-indices between $1$ and $n$, with strictly increasing order. Write $\textbf{a}^{I,J}$ as that in \eqref{200208t8}.\\
\textbf{5.} We denote by $T_n$ the $\overline{\partial }$ operator from $L^2_{(s,t)}(\mathbb{C}^n)$ into $L^2_{(s,t+1)}(\mathbb{C}^n)$  and by $S_n$ the $\overline{\partial }$ operator from $L^2_{(s,t+1)}(\mathbb{C}^n)$ into $L^2_{(s,t+2)}(\mathbb{C}^n)$. Similar arguments implies that $N_{S_n}$ is a closed subspace of $L^2_{(s,t+1)}(\mathbb{C}^n)$ and $R_{T_n}\subset N_{S_n}$;\\
\textbf{6.} Write
\begin{eqnarray*}
\mathscr{D}_{(s,t)}(\mathbb{C}^n)\triangleq \Bigg\{f= \sum_{|I|=s,|J|=t}'f_{I,J}\,\mathrm{d}z^I\wedge \mathrm{d}\overline{z}^J:\,\,f_{I,J}\in \mathscr{C}_c^{\infty}(\mathbb{C}^n)\Bigg\}.
\end{eqnarray*}
Then $\mathscr{D}_{(s,t)}(\mathbb{C}^n)$ is dense in $L^2_{(s,t)}(\mathbb{C}^n)$.\\
\textbf{7.} In this section, the sum $\sum'$ is taken only over strictly increasing multi-indices between $1$ and $n$.

We shall prove that $R_{T_n}=N_{S_n}$ and in particular solve the $\overline{\partial}$ equation on $\mathbb{C}^n$ in a manner that the solutions can be used to built up the desired solution to a similar $\overline{\partial}$ equation but on $\ell^p$. The key is to establish a suitable dimension-free {\it a priori} estimate.

First of all, quite similarly to Theorem \ref{200207t1}, we have the following $L^2$ estimate for $(s,t+1)$-forms in $\mathscr{D}_{(s,t)}(\mathbb{C}^n)$:

\begin{theorem}\label{200208t1}
 It holds that
\begin{eqnarray}\label{200208e6}
||f||_{L^2_{(s,t+1)}(\mathbb{C}^n)}^2\leq ||T_n^*f||_{L^2_{(s,t)}(\mathbb{C}^n)}^2+||S_nf||_{L^2_{(s,t+2)}(\mathbb{C}^n)}^2,\quad\forall\,f\in \mathscr{D}_{(s,t)}(\mathbb{C}^n).
\end{eqnarray}
\end{theorem}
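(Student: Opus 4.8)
The plan is to transcribe, essentially verbatim, the argument used for Theorem \ref{200207t1}, the only changes being that the underlying measure is the finite-dimensional Gaussian $\mathcal{N}^n$ in place of $P$, the test functions range over $\mathscr{C}_c^\infty(\mathbb{C}^n)$, and all index sums are finite. The two identities on which everything rests transcribe unchanged: the integration-by-parts formula
\[
\int_{\mathbb{C}^n}\frac{\partial\varphi}{\partial\overline{z_j}}\cdot\overline{\psi}\,\mathrm{d}\mathcal{N}^n=-\int_{\mathbb{C}^n}\varphi\cdot\overline{\delta_j\psi}\,\mathrm{d}\mathcal{N}^n,\qquad\varphi,\psi\in\mathscr{C}_c^\infty(\mathbb{C}^n),
\]
which holds because $\mathcal{N}^n$ has the Gaussian density $\prod_k(2\pi a_k^2)^{-1}e^{-(x_k^2+y_k^2)/2a_k^2}$ and $\delta_j=\partial/\partial z_j-\overline{z_j}/2a_j^2$ is exactly the formal adjoint of $-\partial/\partial\overline{z_j}$ for this weight; and the commutator identity
\[
\Bigl(\delta_k\frac{\partial}{\partial\overline{z_j}}-\frac{\partial}{\partial\overline{z_j}}\delta_k\Bigr)w=\frac{w}{2a_k^2}\frac{\partial\overline{z_k}}{\partial\overline{z_j}},\qquad w\in\mathscr{C}_c^\infty(\mathbb{C}^n),
\]
which is immediate since $\partial/\partial z_k$ and $\partial/\partial\overline{z_j}$ commute and only the multiplication term of $\delta_k$ survives.

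Next, for $f=\sum'_{|I|=s,|J|=t+1}f_{I,J}\,\mathrm{d}z^I\wedge\mathrm{d}\overline{z}^J\in\mathscr{D}_{(s,t+1)}(\mathbb{C}^n)$ I would compute the pairing $(f,T_nu)_{L^2_{(s,t+1)}(\mathbb{C}^n)}$ against an arbitrary $u\in D_{T_n}$, integrate by parts to move $\delta_j$ onto the coefficients, and read off that $f\in D_{T_n^*}$ with
\[
T_n^*f=(-1)^{s-1}\sum_{|I|=s,|K|=t}{}'\,\sum_{1\le j\le n}2a_j^2\,\delta_j(f_{I,jK})\,\mathrm{d}z^I\wedge\mathrm{d}\overline{z}^K,
\]
where $f_{I,jK}$ is the antisymmetrised coefficient (set to $0$ when $j\in K$). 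I would then expand $\|T_n^*f\|_{L^2_{(s,t)}(\mathbb{C}^n)}^2$, and, using the formula \eqref{formula of d-bar} for $\overline{\partial}$ applied to $S_n$, expand $\|S_nf\|_{L^2_{(s,t+2)}(\mathbb{C}^n)}^2$; splitting the $S_n$-term into its diagonal ($i=j$) and off-diagonal ($i\ne j$) parts and relabelling multi-indices exactly as in the proof of Theorem \ref{200207t1} — in particular invoking $\varepsilon_{i,L}^{j,K}=-\varepsilon_{L}^{j,J}\,\varepsilon_{i,J}^{K}$ to match the off-diagonal $S_n$-terms with a part of the $T_n^*$-term — the off-diagonal contributions cancel, leaving a sum of brackets of the form $\delta_j(f_{I,jJ})\overline{\delta_i(f_{I,iJ})}-\dfrac{\partial f_{I,jJ}}{\partial\overline{z_i}}\,\overline{\dfrac{\partial f_{I,iJ}}{\partial\overline{z_j}}}$ together with the manifestly nonnegative term $2^{s+t+2}\sum'_{|I|=s,|K|=t+1}\textbf{a}^{I,K}a_i^2\int_{\mathbb{C}^n}|\partial f_{I,K}/\partial\overline{z_i}|^2\,\mathrm{d}\mathcal{N}^n$.

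Finally, applying the integration-by-parts identity and then the commutator identity to each bracket converts it into $\dfrac{1}{2a_j^2}\dfrac{\partial\overline{z_j}}{\partial\overline{z_i}}\,f_{I,jJ}\,\overline{f_{I,iJ}}$, so only the terms with $i=j$ survive; collecting the constants this gives
\[
\|T_n^*f\|_{L^2_{(s,t)}(\mathbb{C}^n)}^2+\|S_nf\|_{L^2_{(s,t+2)}(\mathbb{C}^n)}^2=(t+1)\|f\|_{L^2_{(s,t+1)}(\mathbb{C}^n)}^2+2^{s+t+1}\sum_{|I|=s,|K|=t+1}{}'\textbf{a}^{I,K}\,\|\overline{\partial}f_{I,K}\|_{L^2_{(0,1)}(\mathbb{C}^n)}^2,
\]
and since $t+1\ge1$ and both terms on the right are nonnegative, \eqref{200208e6} follows. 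I do not expect a real obstacle here: because $n$ is finite, all sums are finite and the Gaussian weight decays fast enough that no truncation (unlike the $\varphi\varphi_n$ cutoff needed in Lemma \ref{lem2.5.1}) is required to justify the integrations by parts, so this finite-dimensional estimate is if anything slightly easier than Theorem \ref{200207t1}; the only point demanding care is the combinatorial bookkeeping with the strictly increasing multi-indices and the signs $\varepsilon^{\bullet}_{\bullet}$, which is carried out verbatim as in that proof.
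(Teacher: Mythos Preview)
Your proposal is correct and follows essentially the same approach as the paper's own proof: the paper explicitly states that the argument is ``very close to that of \cite[Proposition 2.1.2]{Hor65} and Theorem \ref{200207t1}'' and then transcribes the computation verbatim with $\mathcal{N}^n$ in place of $P$, $\mathscr{C}_c^\infty(\mathbb{C}^n)$ in place of $\mathscr{C}_c^\infty$, and all index sums finite, arriving at the same exact identity $(t+1)\|f\|^2+2^{s+t+1}\sum'\textbf{a}^{I,K}\|\overline{\partial}f_{I,K}\|^2$ that you wrote down.
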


\begin{proof}
The proof is actually very close to that of \cite[Proposition 2.1.2]{Hor65} and Theorem \ref{200207t1} but, for the readers' convenience and also for completeness, we shall give below the details.

Suppose $f\in \mathscr{D}_{(s,t+1)}(\mathbb{C}^n)$ and $u\in D_{T_n}$ with
\begin{eqnarray*}
f= \sum_{|I|=s,|J|=t+1}'f_{I,J}\,\mathrm{d}z^I\wedge \mathrm{d}\overline{z}^J,
\,\,\,u= \sum_{|I|=s,|K|=t}'u_{I,K}\,\mathrm{d}z^I\wedge \mathrm{d}\overline{z}^K
\end{eqnarray*}
and $T_nu= (-1)^s\sum\limits_{|I|=s,|J|=t+1}'g_{I,J}\,\mathrm{d}z^I\wedge \mathrm{d}\overline{z}^J$ such that
\begin{eqnarray}\label{weak def of st form on Cn}
\int_{\mathbb{C}^n}g_{I,J}\overline{\varphi}\,\mathrm{d}\mathcal {N}^n=-\int_{\mathbb{C}^n}\sum_{1\leq j\leq n}\sum_{|K|=t}'\varepsilon_{j, K}^{J}u_{I,K}\cdot \overline{\delta_j\varphi} \,\mathrm{d}\mathcal {N}^n,\quad\forall\,\varphi\in \mathscr{C}_c^{\infty}(\mathbb{C}^n).
\end{eqnarray}
Then
\begin{eqnarray*}
&&(f,T_n u)_{L^2_{(s,t+1)}(\mathbb{C}^n)}\\
&=&(-1)^s\cdot 2^{s+t+1}\cdot\sum_{|I|=s,|J|=t+1}' \textbf{a}^{I,J} \int_{\mathbb{C}^n}f_{I,J}\cdot\overline{g_{I,J}}\,\mathrm{d}\mathcal {N}^n\\
&=&(-1)^{s-1}\cdot 2^{s+t+1}\cdot\sum_{|I|=s,|J|=t+1}' \textbf{a}^{I,J} \int_{\mathbb{C}^n}\sum_{1\leq j\leq n}\sum_{|K|=t}'\varepsilon_{j, K}^{J}\cdot\delta_j(f_{I,J})\cdot\overline{u_{I,K}} \,\mathrm{d}\mathcal {N}^n\\
&=&(-1)^{s-1}\cdot 2^{s+t}\cdot\sum_{|I|=s,|K|=t}'\textbf{a}^{I,K}\int_{\mathbb{C}^n}\sum_{1\leq j\leq n}\sum_{|J|=t+1}'2a_j^2\cdot\varepsilon_{j,K}^{J}\cdot \delta_j(f_{I,J})\cdot\overline{ u_{I,K}}\,\mathrm{d}\mathcal {N}^n.
\end{eqnarray*}
Note that if $J\neq \{j\}\cup K$ then $\varepsilon_{j,K}^{J}=0$. Therefore, if $j\in K$ we let $f_{I,jK}\triangleq0$ and if $j\notin K$, there exists a unique multi-index $J$ with strictly order such that $|J|=t+1$ and $J= \{j\}\cup K$, we let $f_{I,jK}\triangleq\varepsilon_{j,K}^{J}\cdot f_{I,J}$. Therefore,
\begin{eqnarray*}
(f,T_n u)_{L^2_{(s,t+1)}(\mathbb{C}^n)}
=(-1)^{s-1}\cdot 2^{s+t}\cdot\sum_{|I|=s,|K|=t}'\textbf{a}^{I,K}\int_{\mathbb{C}^n}\sum_{1\leq j\leq n} 2a_j^2\delta_j(f_{I,jK})\cdot\overline{ u_{I,K}}\,\mathrm{d}\mathcal {N}^n.
\end{eqnarray*}
This implies that $f\in D_{T_n^*}$ and
\begin{eqnarray*}
T_n^*f&=&(-1)^{s-1}\sum_{|I|=s,|K|=t}'\sum_{1\leq j\leq n}2a_j^2\delta_j(f_{I,jK})\,\mathrm{d}z^I\wedge \mathrm{d}\overline{z}^K.
\end{eqnarray*}
Thus
\begin{eqnarray*}
||T_n^*f||_{L^2_{(s,t)}(\mathbb{C}^n)}^2
&=&2^{s+t+2}\sum_{|I|=s,|K|=t}'\textbf{a}^{I,K}\int_{\mathbb{C}^n}\sum_{1\leq j\leq n}a_j^2\delta_j(f_{I,jK})\cdot\overline{\sum_{1\leq i\leq n}a_i^2\delta_i(f_{I,iK})}\,\mathrm{d}\mathcal {N}^n\\
&=&2^{s+t+2}\sum_{|I|=s,|K|=t}'\textbf{a}^{I,K}\sum_{1\leq i,j\leq n}\int_{\mathbb{C}^n}a_j^2\delta_j(f_{I,jK})\cdot\overline{ a_i^2\delta_i(f_{I,iK})}\,\mathrm{d}\mathcal {N}^n.
\end{eqnarray*}

Now, similarly to (\ref{formula of d-bar}) we have
\begin{eqnarray*}
S_nf=(-1)^s\sum_{|I|=s,|M|=t+2}'\sum_{1\leq j\leq n}\sum_{|J|=t+1}'\varepsilon_{j,J}^{M}\frac{\partial f_{I,J}}{\partial \overline{z_j}}\,\mathrm{d}z^I\wedge \mathrm{d}\overline{z}^M,
\end{eqnarray*}
which implies that
\begin{eqnarray*}
&&||S_nf||_{L^2_{(s,t+2)}(\mathbb{C}^n)}^2\\
&=&2^{s+t+2}\sum_{|I|=s,|M|=t+2}'\textbf{a}^{I,M}\int_{\mathbb{C}^n}\sum_{1\leq j\leq n}\sum_{|K|=t+1}'\varepsilon_{j,K}^{M}\frac{\partial f_{I,K}}{\partial \overline{z_j}}\cdot\overline{\sum_{1\leq i\leq n}\sum_{|L|=t+1}'\varepsilon_{i,L}^{M}\frac{\partial f_{I,L}}{\partial \overline{z_i}}}\,\mathrm{d}\mathcal {N}^n\\
&=&2^{s+t+2}\sum_{|I|=s,|M|=t+2}'\sum_{1\leq i, j\leq n}\sum_{|K|=t+1}'\sum_{|L|=t+1}'\textbf{a}^{I,M}\int_{\mathbb{C}^n}\varepsilon_{j,K}^{M}\varepsilon_{i,L}^{M}\frac{\partial f_{I,K}}{\partial \overline{z_j}}\cdot\overline{\frac{\partial f_{I,L}}{\partial \overline{z_i}}}\,\mathrm{d}\mathcal {N}^n.
\end{eqnarray*}
Note that $\varepsilon_{j,K}^{M}\cdot\varepsilon_{i,L}^{M}\neq 0$ if and only if $j\notin K,\,i\notin L$ and $M=\{j\}\cup K=\{i\}\cup L$, in this case $\varepsilon_{j,K}^{M}\varepsilon_{i,L}^{M}=\varepsilon_{i,L}^{j,K}$. Thus
\begin{equation}\label{200228e5}
\begin{array}{ll}
\displaystyle ||S_nf||_{L^2_{(s,t+2)}(\mathbb{C}^n)}^2\\[3mm]
\displaystyle=2^{s+t+2}\sum_{|I|=s,|K|=t+1,|L|=t+1}'\sum_{1\leq i,j\leq n}\textbf{a}^{I,K} a_j^2\int_{\mathbb{C}^n}\varepsilon_{i,L}^{j,K}\frac{\partial f_{I,K}}{\partial \overline{z_j}}\cdot\overline{\frac{\partial f_{I,L}}{\partial \overline{z_i}}}\,\mathrm{d}\mathcal {N}^n.
\end{array}
\end{equation}
Clearly, if $i=j$ then, $\varepsilon_{i,L}^{j,K}\neq 0$ if and only if $i\notin K$ and $K=L$. Hence the corresponding part in \eqref{200228e5} is
\begin{eqnarray*}
2^{s+t+2}\sum_{|I|=s,|K|=t+1}'\sum_{ i\notin K}\textbf{a}^{I,K} a_i^2\int_{\mathbb{C}^n}\bigg|\frac{\partial f_{I,K}}{\partial \overline{z_i}}\bigg|^2\,\mathrm{d}\mathcal {N}^n.
\end{eqnarray*}
If $i\neq j$ then, $\varepsilon_{i,L}^{j,K}\neq 0$ if and only if $i\in K,\,j\in L$ and $i\notin L,\,j\notin K$. Therefore, there exists $|J|=t$ with strictly increasing index such that $J\cup\{i,j\}=\{i\}\cup L=\{j\}\cup K$. Since $\varepsilon_{i,L}^{j,K}=\varepsilon_{i,L}^{i,j,J}\varepsilon_{i,j,J}^{j,i,J}\varepsilon_{j,i,J}^{j,K}
=-\varepsilon_{L}^{j,J}\varepsilon_{i,J}^{K}$, the rest part in \eqref{200228e5} is
\begin{eqnarray*}
&&-2^{s+t+2}\sum_{|I|=s,|K|=t+1,|L|=t+1,|J|=t}'\sum_{i\neq j}\textbf{a}^{I,K} a_j^2\int_{\mathbb{C}^n}\varepsilon_{L}^{j,J}\varepsilon_{i,J}^{K}\frac{\partial f_{I,K}}{\partial \overline{z_j}}\cdot\overline{\frac{\partial f_{I,L}}{\partial \overline{z_i}}}\,\mathrm{d}\mathcal {N}^n\\
&=&-2^{s+t+2}\sum_{|I|=s,|K|=t+1,|L|=t+1,|J|=t}'\sum_{i\neq j}\textbf{a}^{I,K} a_j^2\int_{\mathbb{C}^n}\varepsilon_{i,J}^{K}\frac{\partial f_{I,K}}{\partial \overline{z_j}}\cdot\overline{\varepsilon_{L}^{j,J}\frac{\partial f_{I,L}}{\partial \overline{z_i}}}\,\mathrm{d}\mathcal {N}^n\\
&=&-2^{s+t+2}\sum_{|I|=s,|J|=t}'\sum_{ i\neq j}\textbf{a}^{I,J}\cdot a_j^2\cdot a_i^2\cdot\int_{\mathbb{C}^n}\frac{\partial f_{I,iJ}}{\partial \overline{z_j}}\cdot\overline{\frac{\partial f_{I,jJ}}{\partial \overline{z_i}}}\,\mathrm{d}\mathcal {N}^n.
\end{eqnarray*}

Combining the above computations, we arrive at
\begin{eqnarray*}
&&||T_n^*f||_{L^2_{(s,t)}(\mathbb{C}^n)}^2
+
||S_nf||_{L^2_{(s,t+2)}(\mathbb{C}^n)}^2\\
&=&2^{s+t+2}\sum_{|I|=s,|J|=t}' \sum_{1\leq i,j\leq n}\textbf{a}^{I,J}\cdot a_j^2\cdot a_i^2\cdot\int_{\Omega}\delta_j(f_{I,jJ})\cdot\overline{ \delta_i(f_{I,iJ})}\,\mathrm{d}\mathcal {N}^n\\
&&+2^{s+t+2}\sum_{|I|=s,|K|=t+1}'\sum_{ i\notin K}\textbf{a}^{I,K}\cdot a_i^2\cdot\int_{\mathbb{C}^n}\bigg|\frac{\partial f_{I,K}}{\partial \overline{z_i}}\bigg|^2\,\mathrm{d}\mathcal {N}^n\\
&&-2^{s+t+2}\sum_{|I|=s,|J|=t}'\sum_{ i\neq j}\textbf{a}^{I,J}\cdot a_j^2\cdot a_i^2\cdot\int_{\mathbb{C}^n}\frac{\partial f_{I,iJ}}{\partial \overline{z_j}}\cdot\overline{\frac{\partial f_{I,jJ}}{\partial \overline{z_i}}}\,\mathrm{d}\mathcal {N}^n\\
&=&2^{s+t+2}\sum_{|I|=s,|J|=t}'\sum_{1\leq i,j\leq n} \textbf{a}^{I,J}\cdot a_j^2\cdot a_i^2\cdot\int_{\Omega}\delta_j(f_{I,jJ})\cdot\overline{ \delta_i(f_{I,iJ})}\,\mathrm{d}\mathcal {N}^n\\
&&+2^{s+t+2}\sum_{|I|=s,|K|=t+1}'\sum_{ 1\leq i\leq n}\textbf{a}^{I,K}\cdot a_i^2\cdot\int_{\mathbb{C}^n}\bigg|\frac{\partial f_{I,K}}{\partial \overline{z_i}}\bigg|^2\,\mathrm{d}\mathcal {N}^n\\
&&-2^{s+t+2}\sum_{|I|=s,|J|=t}'\sum_{1\leq i,j\leq n}\textbf{a}^{I,J}\cdot a_j^2\cdot a_i^2\cdot\int_{\mathbb{C}^n}\frac{\partial f_{I,iJ}}{\partial \overline{z_j}}\cdot\overline{\frac{\partial f_{I,jJ}}{\partial \overline{z_i}}}\,\mathrm{d}\mathcal {N}^n\\
&=&2^{s+t+2}\sum_{|I|=s,|J|=t}' \sum_{1\leq i,j\leq n}\textbf{a}^{I,J} a_j^2 a_i^2\int_{\mathbb{C}^n}\Bigg(\delta_j(f_{I,jJ})\cdot\overline{ \delta_i(f_{I,iJ})}-\frac{\partial f_{I,jJ}}{\partial \overline{z_i}}\cdot\overline{\frac{\partial f_{I,iJ}}{\partial \overline{z_j}}}\Bigg)\,\mathrm{d}\mathcal {N}^n\\
&&+2^{s+t+2}\sum_{|I|=s,|K|=t+1}'\sum_{1\leq i\leq n}\textbf{a}^{I,K}\cdot a_i^2\cdot\int_{\mathbb{C}^n}\bigg|\frac{\partial f_{I,K}}{\partial \overline{z_i}}\bigg|^2\,\mathrm{d}\mathcal {N}^n,
\end{eqnarray*}
where the second equality follows from
\begin{eqnarray*}
2^{s+t+2}\sum_{|I|=s,|J|=t}'\sum_{1\leq i \leq n}\textbf{a}^{I,J}\cdot  a_i^4\cdot\int_{\mathbb{C}^n}\bigg|\frac{\partial f_{I,iJ}}{\partial \overline{z_i}}\bigg|^2\,\mathrm{d}\mathcal {N}^n\\
=2^{s+t+2}\sum_{|I|=s,|K|=t+1}'\sum_{ i\in K}\textbf{a}^{I,K}\cdot a_i^2\cdot\int_{\mathbb{C}^n}\bigg|\frac{\partial f_{I,K}}{\partial \overline{z_i}}\bigg|^2\,\mathrm{d}\mathcal {N}^n,
\end{eqnarray*}
and the third equality follows from
\begin{eqnarray*}
&&-2^{s+t+2}\cdot\sum_{|I|=s,|J|=t}'\sum_{1\leq i,j\leq n}\textbf{a}^{I,J}\cdot a_j^2\cdot a_i^2\cdot\int_{\mathbb{C}^n}\frac{\partial f_{I,iJ}}{\partial \overline{z_j}}\cdot\overline{\frac{\partial f_{I,jJ}}{\partial \overline{z_i}}}\,\mathrm{d}\mathcal {N}^n\\
&=&-2^{s+t+2}\cdot\sum_{|I|=s,|J|=t}' \sum_{1\leq i,j\leq n}\textbf{a}^{I,J}\cdot a_j^2\cdot a_i^2\cdot\int_{\mathbb{C}^n}\frac{\partial f_{I,jJ}}{\partial \overline{z_i}}\cdot\overline{\frac{\partial f_{I,iJ}}{\partial \overline{z_j}}}\,\mathrm{d}\mathcal {N}^n.
\end{eqnarray*}
This, together with the following two simple equalities:
\begin{eqnarray*}
\int_{\mathbb{C}^n}\frac{\partial \varphi}{\partial \overline{z_j}}\cdot\overline{\psi} \,\mathrm{d}\mathcal {N}^n
=-\int_{\mathbb{C}^n}\varphi\cdot\overline{\delta_j\psi}\,\mathrm{d}\mathcal {N}^n,\quad\,1\leq j\leq n,\,\forall\,\varphi,\psi\in\mathscr{C}_c^{\infty}(\mathbb{C}^n)
\end{eqnarray*}
and
\begin{eqnarray*}
\bigg(\delta_k\frac{\partial  }{\partial \overline{z_j}}-\frac{\partial  }{\partial \overline{z_j}}\delta_k\bigg)w=w\frac{1}{2a_k^2}\frac{\partial\overline{z_k}}{\partial \overline{z_j}},\quad\,1\leq k,j\leq n,\,\forall\,w\in \mathscr{C}_c^{\infty}(\mathbb{C}^n),
\end{eqnarray*}
yields that
\begin{eqnarray*}
&&||T_n^*f||_{L^2_{(s,t)}(\mathbb{C}^n)}^2
+
||S_nf||_{L^2_{(s,t+2)}(\mathbb{C}^n)}^2\\
&=&2^{s+t+2}\sum_{|I|=s,|J|=t}' \sum_{1\leq i,j\leq n}\textbf{a}^{I,J}\cdot a_j^2\cdot a_i^2\cdot\int_{\mathbb{C}^n}\frac{1}{2a_j^2}\cdot \frac{\partial\overline{z_j}}{\partial \overline{z_i}}\cdot f_{I,jJ}\cdot\overline{f_{I,iJ}}\,\mathrm{d}\mathcal {N}^n\\
&&+2^{s+t+2}\sum_{|I|=s,|K|=t+1}'\sum_{1\leq i\leq n}\textbf{a}^{I,K}\cdot a_i^2\cdot\int_{\mathbb{C}^n}\bigg|\frac{\partial f_{I,K}}{\partial \overline{z_i}}\bigg|^2\,\mathrm{d}\mathcal {N}^n\\
&=&(t+1)\cdot||f||_{L^2_{(s,t+1)}(\mathbb{C}^n)}^2+2^{s+t+1}\cdot\sum_{|I|=s,|K|=t+1}'\textbf{a}^{I,K}\cdot ||\overline{\partial}f_{I,K}||_{L^2_{(0,1)}(\mathbb{C}^n)}^2\\
&\geq & ||f||_{L^2_{(s,t+1)}(\mathbb{C}^n)}^2.
\end{eqnarray*}
This gives the desired estimate (\ref{200208e6}), hence the proof of Theorem \ref{200208t1} is complete.
\end{proof}

\begin{remark}
We believe that Theorems \ref{200207t1} and \ref{200208t1} should be equivalent but we cannot prove this equivalence at this moment.
\end{remark}

Now, as a consequence of Theorem \ref{200208t1}, we have the following key dimension-free {\it a priori} estimate:

\begin{corollary}\label{200208t2}
 It holds that
\begin{eqnarray}\label{200208e7}
||f||_{L^2_{(s,t+1)}(\mathbb{C}^n)}^2\leq ||T_n^*f||_{L^2_{(s,t)}(\mathbb{C}^n)}^2+||S_nf||_{L^2_{(s,t+2)}(\mathbb{C}^n)}^2,\,\quad\forall\,f\in D_{T_n^*}\cap D_{S_n}.
\end{eqnarray}
\end{corollary}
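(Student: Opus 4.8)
The plan is to bootstrap the estimate \eqref{200208e6} of Theorem \ref{200208t1}, which is proved only on the dense subspace $\mathscr{D}_{(s,t+1)}(\mathbb{C}^n)$ of smooth compactly supported $(s,t+1)$-forms, up to the full domain $D_{T_n^*}\cap D_{S_n}$. Thus the whole content of the corollary is a density statement: $\mathscr{D}_{(s,t+1)}(\mathbb{C}^n)$ is dense in $D_{T_n^*}\cap D_{S_n}$ for the graph norm
\begin{eqnarray*}
f\longmapsto \Big(\|f\|_{L^2_{(s,t+1)}(\mathbb{C}^n)}^2+\|T_n^*f\|_{L^2_{(s,t)}(\mathbb{C}^n)}^2+\|S_nf\|_{L^2_{(s,t+2)}(\mathbb{C}^n)}^2\Big)^{1/2}.
\end{eqnarray*}
Granting this, \eqref{200208e7} is immediate: given $f\in D_{T_n^*}\cap D_{S_n}$, pick $f_\nu\in\mathscr{D}_{(s,t+1)}(\mathbb{C}^n)$ with $f_\nu\to f$, $T_n^*f_\nu\to T_n^*f$ and $S_nf_\nu\to S_nf$ in the respective $L^2$-spaces (legitimate since $T_n^*$ and $S_n$ are closed, densely defined), apply \eqref{200208e6} to each $f_\nu$, and let $\nu\to\infty$.

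For the density I would argue in the two classical steps of H\"ormander's finite-dimensional theory. \textbf{Step 1 (cut-off).} As in the proof of Lemma \ref{lem2.5.1}, fix $\varsigma\in C_c^{\infty}(\mathbb{R})$ with $\varsigma\equiv 1$ on $[-1,1]$ and put $\chi_R(\textbf{z})=\varsigma\big(\tfrac1R\sum_{k=1}^n|z_k|^2\big)$ for $R>0$. For $f\in D_{T_n^*}\cap D_{S_n}$ the form $\chi_Rf$ has compact support, and using the explicit first-order formula for $T_n^*$ from the proof of Theorem \ref{200208t1} together with $\overline{\partial}(\chi_Rf)=\chi_R\overline{\partial}f+\overline{\partial}\chi_R\wedge f$, one checks $\chi_Rf\in D_{T_n^*}\cap D_{S_n}$ with $T_n^*(\chi_Rf)=\chi_R T_n^*f+E_R^{(1)}f$ and $S_n(\chi_Rf)=\chi_R S_nf+E_R^{(2)}f$, where $E_R^{(1)},E_R^{(2)}$ are zeroth-order operators built from $\partial_{z_j}\chi_R=\varsigma'\big(\tfrac1R\sum|z_k|^2\big)\tfrac{\overline{z_j}}{R}$ and $\partial_{\overline{z_j}}\chi_R=\varsigma'\big(\tfrac1R\sum|z_k|^2\big)\tfrac{z_j}{R}$. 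On the support of $\varsigma'(\tfrac1R\,\cdot\,)$ one has $\sum|z_k|^2\lesssim R$, hence $\|E_R^{(i)}\|\le C/\sqrt R\to0$; combined with $\chi_R\to1$ pointwise and boundedly, this gives $\chi_Rf\to f$ in the graph norm. So it suffices to approximate forms of compact support.

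\textbf{Step 2 (mollification).} Identifying $\mathbb{C}^n$ with $\mathbb{R}^{2n}$ equipped with the translation-invariant Lebesgue measure, and absorbing the smooth strictly positive Gaussian density $e^{-\sum_k|z_k|^2/(2a_k^2)}$ into the coefficients, the operators $T_n^*$ and $S_n$ become first-order differential operators with $C^{\infty}$ coefficients acting on the flat space $L^2(\mathbb{C}^n)$. Convolving each component of a compactly supported $g\in D_{T_n^*}\cap D_{S_n}$ with a Friedrichs mollifier $\rho_\varepsilon$ produces $g\ast\rho_\varepsilon\in\mathscr{D}_{(s,t+1)}(\mathbb{C}^n)$, and Friedrichs' commutator lemma yields $g\ast\rho_\varepsilon\to g$, $T_n^*(g\ast\rho_\varepsilon)\to T_n^*g$ and $S_n(g\ast\rho_\varepsilon)\to S_ng$ in $L^2$ as $\varepsilon\to0$ (note $T_n^*g=(T_n^*g)$ coincides with the distributional action of the formal adjoint on $g$, so that $T_n^*(g\ast\rho_\varepsilon)-(T_n^*g)\ast\rho_\varepsilon\to0$; likewise for $S_n$). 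Chaining Steps 1 and 2 with a diagonal choice $R_\nu\to\infty$, $\varepsilon_\nu\to0$ produces the desired approximating sequence, which establishes the density and hence \eqref{200208e7}.

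The only genuinely delicate point is Step 2, i.e. that mollification commutes with the first-order operators $T_n^*$ and $S_n$ modulo errors that vanish in $L^2$ on a fixed compact set. This is precisely Friedrichs' lemma for smooth variable coefficients, and it relies crucially on the translation invariance of Lebesgue measure on $\mathbb{R}^{2n}$; the zeroth-order parts of the $\delta_j$'s, being multiplication by the smooth functions $\overline{z_j}/(2a_j^2)$, are harmless since on the relevant compact support they are bounded together with their derivatives. It is exactly this tool that is unavailable in infinitely many variables, which is why the analogous improvement of Theorem \ref{200207t1} remains open (cf. Remark \ref{200209rem1}) and why one must instead prove the \emph{dimension-free} estimate \eqref{200208e7} on $\mathbb{C}^n$ first and subsequently assemble the solution on $\ell^p$ from finite-dimensional pieces.
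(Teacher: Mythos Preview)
Your proposal is correct and follows essentially the same route as the paper: the paper's proof simply notes that the Gaussian-weighted $L^2$ spaces on $\mathbb{C}^n$ can be rewritten as weighted $L^2$ spaces with respect to Lebesgue measure, and then invokes the standard smooth approximation argument (cut-off plus Friedrichs mollification) from H\"ormander's theory, citing \cite[Chapter~4]{Kra} and \cite[Lemma~4.1.3]{Hor90}. Your two steps spell out exactly this argument, and your observation that Step~2 relies on the translation invariance of Lebesgue measure (hence its unavailability in infinite dimensions) matches Remark~\ref{200209rem1} precisely.
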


\begin{proof}
In finite dimensions, in view of \eqref{def of N^n}, the estimate (\ref{200208e6}) (in which all of the $L^2$ spaces $L^2_{(s,t)}(\mathbb{C}^n)$, $L^2_{(s,t+1)}(\mathbb{C}^n)$ and $L^2_{(s,t+2)}(\mathbb{C}^n)$ are defined in terms of Gaussian measure) can be equivalently re-written as a weighted estimate (in which the corresponding $L^2$ spaces are defined in terms of Legesgue measure). Hence, by (\ref{200208e6}), a standard smooth approximation argument implies that the inequality \eqref{200208e7} holds  (e.g., \cite[Chapter 4]{Kra} and \cite[Lemma 4.1.3]{Hor90}). This completes the proof of Corollary \ref{200208t2}.
\end{proof}

Now, combining Corollaries \ref{estimation lemma} and \ref{200208t2}, we immediately obtain the following result, which will play a fundamental role in the next section.

\begin{corollary}\label{200208t21}
 $R_{T_n}=N_{S_n}$ and for each $f\in N_{S_n}$ there exists $u\in D_{T_n}$ such that $T_nu=f$ and
 $$||u||_{_{L^2_{(s,t)}(\mathbb{C}^n)}}\leq ||f||_{_{L^2_{(s,t+1)}(\mathbb{C}^n)}}.
 $$
\end{corollary}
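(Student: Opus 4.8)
The plan is to derive Corollary \ref{200208t21} as an immediate application of part 1) of Corollary \ref{estimation lemma} to the operators $T_n$ and $S_n$, feeding in the dimension-free bound of Corollary \ref{200208t2} as the required \emph{a priori} estimate.

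First I would fix $H_1=L^2_{(s,t)}(\mathbb{C}^n)$, $H_2=L^2_{(s,t+1)}(\mathbb{C}^n)$, $H_3=L^2_{(s,t+2)}(\mathbb{C}^n)$, and take $T=T_n\colon H_1\to H_2$, $S=S_n\colon H_2\to H_3$. All the structural hypotheses of Corollary \ref{estimation lemma} are already available from the set-up of this section: $T_n$ and $S_n$ are linear, densely defined (their domains contain the dense subspaces $\mathscr{D}_{(s,t)}(\mathbb{C}^n)$, resp.\ $\mathscr{D}_{(s,t+1)}(\mathbb{C}^n)$) and closed (by the same argument as in Lemma \ref{db is closable}), while $R_{T_n}\subset N_{S_n}$ with $N_{S_n}$ closed in $H_2$ was already recorded in this section (it follows from the finite-dimensional analogue of Lemma \ref{lem2.5.1}, i.e.\ $S_nT_n=0$, together with the closedness of $S_n$).

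Next I would observe that Corollary \ref{200208t2} is exactly inequality \eqref{estimation lower bounded} for this triple of operators with constant $C=1$: for every $f\in D_{T_n^*}\cap D_{S_n}$,
\[ \|f\|_{L^2_{(s,t+1)}(\mathbb{C}^n)}^2\le \|T_n^*f\|_{L^2_{(s,t)}(\mathbb{C}^n)}^2+\|S_nf\|_{L^2_{(s,t+2)}(\mathbb{C}^n)}^2 . \]
Hence part 1) of Corollary \ref{estimation lemma} applies verbatim and yields $R_{T_n}=N_{S_n}$; moreover it produces, for each $f\in N_{S_n}$, a (unique) $u\in D_{T_n}\cap N_{T_n}^\perp$ with $T_nu=f$ and $\|u\|_{L^2_{(s,t)}(\mathbb{C}^n)}\le C\,\|f\|_{L^2_{(s,t+1)}(\mathbb{C}^n)}=\|f\|_{L^2_{(s,t+1)}(\mathbb{C}^n)}$, which is precisely the stated estimate. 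Since the corollary asks only for the existence of a solution, dropping the orthogonality refinement causes no loss.

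There is essentially no obstacle remaining at this level: the analytic substance --- the commutator identity for $\delta_k$ and $\partial/\partial\overline{z_j}$, the integration-by-parts formulas against the Gaussian weight, Theorem \ref{200208t1}, and above all the mollification step that upgrades the smooth-form estimate to all of $D_{T_n^*}\cap D_{S_n}$ in Corollary \ref{200208t2} --- has already been carried out. The only points worth verifying carefully are of a bookkeeping nature: that the constant appearing in \eqref{200208e7} is genuinely $1$ (so that no spurious factor enters the solution bound), and that the identification of $L^2_{(s,t)}(\mathbb{C}^n)$ as a closed subspace of $L^2_{(s,t)}(\ell^p)$ recorded earlier in this section is compatible with the domains of $T_n$, $T_n^*$ and $S_n$, so that these finite-dimensional solutions can be assembled into a solution on $\ell^p$ in the next section.
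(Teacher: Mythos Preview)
Your proposal is correct and follows essentially the same approach as the paper: the authors simply state that Corollary \ref{200208t21} follows immediately by combining Corollaries \ref{estimation lemma} and \ref{200208t2}, which is exactly the argument you have spelled out in detail.
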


\begin{remark}\label{remark2-9-2}
By the conclusion 2) in Corollary \ref{estimation lemma}, in order to prove Corollary \ref{200208t21}, one uses only a weak version of \eqref{200208e7}, i.e.,
\begin{eqnarray}\label{weakestimation111lower bounded}
||f||_{L^2_{(s,t+1)}(\mathbb{C}^n)}\leq ||T_n^*f||_{L^2_{(s,t)}(\mathbb{C}^n)},\,\quad\forall\,f\in D_{T_n^*} \cap N_{S_n}.
\end{eqnarray}
\end{remark}

\section{Solving the $\overline{\partial}$ equation on $\ell^p$}\label{Sec5}

We are  now in a position to solve the $\overline{\partial}$ equation \eqref{d-bar equation} on $\ell^p$. In what follows, we fix two non-negative integers $s$ and $t$.

For each $n\in\mathbb{N}$, by Remark \ref{identify remark}, $L^2(\mathbb{C}^n,\mathcal {N}^n)$ can be identified as a closed subspace of $L^2(\ell^p,P)$.  We denote  by $\Pi_n$ the projection from $L^2(\ell^p,P)$ into $L^2(\mathbb{C}^n,\mathcal {N}^n)$.  For any $f\in N_S$ (the kernel of the operator $S$) with
\begin{eqnarray}\label{200214e1}
f= \sum_{|I|=s,|J|=t+1}'f_{I,J}\,\mathrm{d}z^I\wedge \mathrm{d}\overline{z}^J,
\end{eqnarray}
by the definition of $S$, for multi-indices with strictly increasing order $I=(i_1,\cdots,i_s)$ and $M=(m_1,\cdots,m_{t+2})$, we have
\begin{eqnarray}\label{kernel condition}
-\int_{\ell^p}\sum_{1\leq j<\infty}\sum_{|J|=t+1}'\varepsilon_{j, J}^{M}\cdot f_{I,J}\cdot\overline{\delta_j\varphi}\,\mathrm{d}P=0,\quad\forall\,\varphi\in\mathscr {C}_c^{\infty}.
\end{eqnarray}
Write
\begin{eqnarray*}
M_nf\triangleq \sum_{|I|=s,|J|=t+1,\, \max (I\cup J)\leq n}'\Pi_nf_{I,J}\,\mathrm{d}z^I\wedge \mathrm{d}\overline{z}^J.
\end{eqnarray*}
Then by Remark \ref{identify remark}, $M_nf$ can be identified as an element of $L^2_{(s,t+1)}(\mathbb{C}^n)$. Obviously, $M_nf\to f$ in $L^2_{(s,t+1)}(\ell^p)$ as $n\to\infty$.

We need the following simple result:

\begin{lemma}\label{200209l1}
For any $f\in N_S$, it holds that
$$M_nf\in N_{S_n}.
$$
\end{lemma}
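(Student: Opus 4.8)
The plan is to show that $M_nf$, regarded as an element of $L^2_{(s,t+1)}(\mathbb{C}^n)$, lies in the kernel of $S_n$, by transferring the closedness condition \eqref{kernel condition} for $f$ down to a finite-dimensional version. Recall that $M_nf=\sum'_{|I|=s,|J|=t+1,\,\max(I\cup J)\le n}\Pi_n f_{I,J}\,\mathrm{d}z^I\wedge\mathrm{d}\overline{z}^J$, and that by definition $M_nf\in N_{S_n}$ means exactly that for all multi-indices $I=(i_1,\dots,i_s)$ and $M=(m_1,\dots,m_{t+2})$ between $1$ and $n$ with strictly increasing order,
\[
\int_{\mathbb{C}^n}\sum_{1\le j\le n}\sum_{|J|=t+1}{}'\varepsilon_{j,J}^{M}\cdot(\Pi_n f_{I,J})\cdot\overline{\delta_j\varphi}\,\mathrm{d}\mathcal{N}^n=0,\qquad\forall\,\varphi\in\mathscr{C}_c^{\infty}(\mathbb{C}^n).
\]

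First I would fix such $I$ and $M$ with $\max(I\cup M)\le n$ (for other $I,M$ there is nothing to prove since the relevant coefficients of $M_nf$ vanish), and fix a test function $\varphi\in\mathscr{C}_c^{\infty}(\mathbb{C}^n)$. The key observation is that such a $\varphi$ is in particular an element of $\mathscr{C}_c^{\infty}$ (a cylinder function on $\ell^p$ depending only on the first $n$ coordinates), so it is a legitimate test function in \eqref{kernel condition}. Moreover, for $j\le n$, $\delta_j\varphi$ again depends only on the first $n$ coordinates, while for $j>n$ only the indices $J$ with $j\in J$ contribute to $\varepsilon_{j,J}^M$, and since $M\subset\{1,\dots,n\}$ such $J$ cannot satisfy $J\cup\{j\}=M$; hence the sum over $j$ in \eqref{kernel condition} effectively runs only over $1\le j\le n$. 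Likewise the inner sum over $J$ with $\varepsilon_{j,J}^M\ne 0$ forces $J\subset M\subset\{1,\dots,n\}$, so only coefficients $f_{I,J}$ with $\max(I\cup J)\le n$ appear. Thus \eqref{kernel condition} reads
\[
\int_{\ell^p}\sum_{1\le j\le n}\sum_{|J|=t+1,\,\max(I\cup J)\le n}{}'\varepsilon_{j,J}^{M}\cdot f_{I,J}\cdot\overline{\delta_j\varphi}\,\mathrm{d}P=0.
\]

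Next I would use the fact that each integrand $f_{I,J}\cdot\overline{\delta_j\varphi}$, with $f_{I,J}$ replaced by its projection, behaves well under $\Pi_n$: since $\delta_j\varphi\in L^2(\mathbb{C}^n,\mathcal{N}^n)$ (viewed inside $L^2(\ell^p,P)$ via Remark \ref{identify remark}) and $\Pi_n$ is the orthogonal projection onto that subspace, we have $\int_{\ell^p}f_{I,J}\cdot\overline{\delta_j\varphi}\,\mathrm{d}P=\int_{\ell^p}(\Pi_n f_{I,J})\cdot\overline{\delta_j\varphi}\,\mathrm{d}P=\langle \delta_j\varphi,\Pi_n f_{I,J}\rangle_{L^2(\ell^p,P)}$, because subtracting $\Pi_n f_{I,J}-f_{I,J}\perp L^2(\mathbb{C}^n,\mathcal{N}^n)\ni \delta_j\varphi$ changes nothing. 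Finally, since $\Pi_n f_{I,J}$ and $\delta_j\varphi$ both lie in $L^2(\mathbb{C}^n,\mathcal{N}^n)$, the integral over $\ell^p$ against $P$ equals the integral over $\mathbb{C}^n$ against $\mathcal{N}^n$ (again Remark \ref{identify remark}). Substituting, \eqref{kernel condition} becomes precisely the displayed finite-dimensional identity characterising $M_nf\in N_{S_n}$, and since $\varphi$, $I$, $M$ were arbitrary, this proves $M_nf\in N_{S_n}$.

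The only slightly delicate point — the one I would be most careful about — is the bookkeeping of which terms survive when we restrict from infinitely many indices $j$ (and multi-indices $J$) to those $\le n$: one must verify that $\varepsilon_{j,J}^M\ne0$ together with $M\subset\{1,\dots,n\}$ genuinely forces $j\le n$ and $J\subset\{1,\dots,n\}$, so that no information is lost in passing between the $\ell^p$-condition and the $\mathbb{C}^n$-condition. This is elementary from the definition of $\varepsilon_{j,J}^M$ (nonzero only when $M=J\cup\{j\}$), but it is the crux of why the projection argument closes. Everything else is a routine application of the identification of $L^2(\mathbb{C}^n,\mathcal{N}^n)$ as a closed subspace of $L^2(\ell^p,P)$ and the defining property of the projection $\Pi_n$.
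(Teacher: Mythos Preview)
Your proof is correct and follows essentially the same route as the paper: restrict the test function to $\mathscr{C}_c^{\infty}(\mathbb{C}^n)$, use that $\varepsilon_{j,J}^{M}=0$ unless $M=J\cup\{j\}\subset\{1,\dots,n\}$ to truncate the sums, replace $f_{I,J}$ by $\Pi_n f_{I,J}$ via the projection identity $\int_{\ell^p}f_{I,J}\overline{\delta_j\varphi}\,\mathrm{d}P=\int_{\ell^p}\Pi_n f_{I,J}\,\overline{\delta_j\varphi}\,\mathrm{d}P$, and then pass from the $P$-integral to the $\mathcal{N}^n$-integral. One small slip: you wrote ``only the indices $J$ with $j\in J$ contribute'' when in fact $\varepsilon_{j,J}^{M}\neq 0$ requires $j\notin J$; but you immediately invoke the correct criterion $M=J\cup\{j\}$, so the argument goes through unchanged.
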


\begin{proof}
For multi-indices $I=(i_1,\cdots,i_s)$ and $M=(m_1,\cdots,m_{t+2})$ with strictly increasing order such that $\max (I\cup M)\leq n$, one has $\varepsilon_{j, J}^{M}=0$ when $j>n$ or $\max J>n$. Suppose that $f$ takes the form of \eqref{200214e1}. By (\ref{kernel condition}) and the definition of $M_nf$, for each $\varphi\in\mathscr {C}_c^{\infty}(\mathbb{C}^n)$, we have
\begin{eqnarray*}
0&=&-\int_{\ell^p}\sum_{1\leq j<\infty}\sum_{|J|=t+1}'\varepsilon_{j, J}^{M}\cdot f_{I,J}\cdot\overline{\delta_j\varphi}\,\mathrm{d}P\\
&=&-\int_{\ell^p}\sum_{1\leq j\leq n}\sum_{|J|=t+1,\,\max J\leq n}'\varepsilon_{j, J}^{M}\cdot f_{I,J}\cdot\overline{\delta_j\varphi}\,\mathrm{d}P\\
&=&-\int_{\ell^p}\sum_{1\leq j\leq n}\sum_{|J|=t+1,\,\max J\leq n}'\varepsilon_{j, J}^{M}\cdot \Pi_nf_{I,J}\cdot\overline{\delta_j\varphi}\,\mathrm{d}P\\
&=&-\int_{\mathbb{C}^n}\sum_{1\leq j\leq n}\sum_{|J|=t+1,\,\max J\leq n}'\varepsilon_{j, J}^{M}\cdot \Pi_nf_{I,J}\cdot\overline{\delta_j\varphi}\,\mathrm{d}\mathcal {N}^n,
\end{eqnarray*}
where the third equality follows from the simple fact that $\delta_j\varphi\in L^2(\mathbb{C}^n,\mathcal{N}^n)$ and hence $\int_{\ell^p} f_{I,J}\cdot\overline{\delta_j\varphi}\,\mathrm{d}P=\int_{\ell^p}\Pi_nf_{I,J}\cdot\overline{\delta_j\varphi}\,\mathrm{d}P$. Hence, $M_nf\in N_{S_n}$.  This completes the proof of Lemma \ref{200209l1}.
\end{proof}

Now, by Lemma \ref{200209l1} and Corollary \ref{200208t21}  in previous section, there exists $u_n\in D_{T_n}$ such that $T_nu_n=M_nf$ and
\begin{equation}\label{200209e3}
||u_n||_{L^2_{(s,t)}(\mathbb{C}^n)}\leq ||M_nf||_{L^2_{(s,t+1)}(\mathbb{C}^n)}=||M_nf||_{L^2_{(s,t+1)}(\ell^p)}\leq ||f||_{L^2_{(s,t+1)}(\ell^p)}.
\end{equation}
Since $L^2_{(s,t)}(\mathbb{C}^n)$ can be naturally identified as a closed subspace of $L^2_{(s,t)}(\ell^p)$ and $||u_n||_{L^2_{(s,t)}(\mathbb{C}^n)}=||u_n||_{L^2_{(s,t)}(\ell^p)}$, combining the separability of $L^2_{(s,t)}(\ell^p)$ and  the Banach-Alaoglu Theorem (e.g., \cite[Theorems 3.15 and 3.17]{Rud91}), we conclude that, there exists a subsequence $\{u_{n_k}\}_{k=1}^{\infty}$ and $u\in L^2_{(s,t)}(\ell^p)$ such that
\begin{equation}\label{200209e1}
\lim\limits_{k\to\infty}(u_{n_k},g)_{L^2_{(s,t)}(\ell^p)}=(u,g)_{L^2_{(s,t)}(\ell^p)},\quad\forall\;g\in L^2_{(s,t)}(\ell^p).
\end{equation}

The following theorem is the main result in this paper.

\begin{theorem}\label{existence theorem}
For any $f\in N_S$, the form $u\in L^2_{(s,t)}(\ell^p)$ given above satisfies that
$$
u\in D_T,\qquad Tu=f
$$
and
\begin{equation}\label{200209e5}
||u||_{L^2_{(s,t)}(\ell^p)}\leq ||f||_{L^2_{(s,t+1)}(\ell^p)}.
 \end{equation}
\end{theorem}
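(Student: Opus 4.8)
The plan is to verify the three assertions in turn: the norm bound \eqref{200209e5} is essentially free, while $u\in D_T$ and $Tu=f$ will be obtained by passing to the limit in the relations $T_{n_k}u_{n_k}=M_{n_k}f$. First, since $u_{n_k}\rightharpoonup u$ weakly in $L^2_{(s,t)}(\ell^p)$ by \eqref{200209e1} and the norm of a Hilbert space is weakly lower semicontinuous, \eqref{200209e3} gives
\[
\|u\|_{L^2_{(s,t)}(\ell^p)}\le\liminf_{k\to\infty}\|u_{n_k}\|_{L^2_{(s,t)}(\ell^p)}=\liminf_{k\to\infty}\|u_{n_k}\|_{L^2_{(s,t)}(\mathbb{C}^{n_k})}\le\|f\|_{L^2_{(s,t+1)}(\ell^p)},
\]
which is \eqref{200209e5}.

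The next ingredient is that the weak convergence $u_{n_k}\rightharpoonup u$ in $L^2_{(s,t)}(\ell^p)$ forces componentwise weak convergence of the coefficients. Write $u=\sum'u_{I,K}\,\mathrm{d}z^I\wedge\mathrm{d}\overline{z}^K$ and $u_{n_k}=\sum'u_{n_k,I,K}\,\mathrm{d}z^I\wedge\mathrm{d}\overline{z}^K$, with the convention $u_{n_k,I,K}\equiv0$ once $\max(I\cup K)>n_k$. Since the inner product on $L^2_{(s,t)}(\ell^p)$ is the $2^{s+t}\textbf{a}^{I,K}$-weighted sum of the $L^2(\ell^p,P)$ inner products of the coefficients, testing $u_{n_k}\rightharpoonup u$ against the one-term forms $h\,\mathrm{d}z^I\wedge\mathrm{d}\overline{z}^K$ ($h\in L^2(\ell^p,P)$) yields $u_{n_k,I,K}\rightharpoonup u_{I,K}$ in $L^2(\ell^p,P)$ for each fixed $I$ and $K$. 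We shall also use that $\Pi_nf_{I,J}\to f_{I,J}$ strongly in $L^2(\ell^p,P)$ as $n\to\infty$, which holds because $\bigcup_nL^2(\mathbb{C}^n,\mathcal{N}^n)$ is dense in $L^2(\ell^p,P)$ (Proposition \ref{Desity Theorem II}).

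Now fix $I$ with $|I|=s$, $J$ with $|J|=t+1$, and $\varphi\in\mathscr{C}_c^\infty(\mathbb{C}^m)$. For every $n_k\ge\max(\{m\}\cup I\cup J)$, unwinding $T_{n_k}u_{n_k}=M_{n_k}f$ through \eqref{weak def of st form}–\eqref{d-dar-f-defi} on $\mathbb{C}^{n_k}$, together with the coincidence of the $\mathcal{N}^{n_k}$- and $P$-integrals of cylinder functions, gives
\[
(-1)^s\int_{\ell^p}(\Pi_{n_k}f_{I,J})\,\overline{\varphi}\,\mathrm{d}P=-\int_{\ell^p}\sum_{1\le i<\infty}\sum_{|K|=t}'\varepsilon_{i,K}^{J}\,u_{n_k,I,K}\cdot\overline{\delta_i\varphi}\,\mathrm{d}P.
\]
One first obtains this for $\varphi$ compactly supported on $\mathbb{C}^{n_k}$ and then extends it to the cylinder function $\varphi$ by the cutoff argument used in the proof of Lemma \ref{lem2.5.1}, inserting $\varsigma\big(\frac{1}{N}\sum_{l=m+1}^{n_k}|z_l|^2\big)$ and letting $N\to\infty$, with Lemma \ref{Ferniqeu's theorem} providing the domination. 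The sum on the right is finite, since $\varepsilon_{i,K}^{J}\ne0$ only when $K=J\setminus\{i\}$ for some $i\in J$, so at most $t+1$ terms survive, and each $\delta_i\varphi$ lies in $L^2(\ell^p,P)$. Hence we may let $k\to\infty$: componentwise weak convergence handles the right-hand side and the strong convergence $\Pi_{n_k}f_{I,J}\to f_{I,J}$ the left-hand side, so that
\[
\int_{\ell^p}\big((-1)^sf_{I,J}\big)\,\overline{\varphi}\,\mathrm{d}P=-\int_{\ell^p}\sum_{1\le i<\infty}\sum_{|K|=t}'\varepsilon_{i,K}^{J}\,u_{I,K}\cdot\overline{\delta_i\varphi}\,\mathrm{d}P,\qquad\forall\,\varphi\in\mathscr{C}_c^\infty.
\]
This is exactly the defining relation \eqref{weak def of st form} for $\overline{\partial}u$, with the coefficient attached to $\mathrm{d}z^I\wedge\mathrm{d}\overline{z}^J$ equal to $(-1)^sf_{I,J}$; and since $f\in L^2_{(s,t+1)}(\ell^p)$, the summability requirement $\sum'2^{s+t+1}\textbf{a}^{I,J}\int_{\ell^p}|f_{I,J}|^2\,\mathrm{d}P=\|f\|^2_{L^2_{(s,t+1)}(\ell^p)}<\infty$ also holds. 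Therefore $u\in D_T$ and, by \eqref{d-dar-f-defi}, $Tu=(-1)^s\sum'(-1)^sf_{I,J}\,\mathrm{d}z^I\wedge\mathrm{d}\overline{z}^J=f$, which with the first paragraph completes the proof. Equivalently, one can first check that each $u_{n_k}$, regarded inside $L^2_{(s,t)}(\ell^p)$, lies in $D_T$ with $Tu_{n_k}=M_{n_k}f$ (integrating out the surplus coordinates and using $\int_{\mathbb{C}}\overline{z_l}\,\mathrm{d}\mathcal{N}_{a_l}=0$), and then conclude from the weak closedness of the graph of the closed operator $T$, since $(u_{n_k},Tu_{n_k})\rightharpoonup(u,f)$.

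I expect the real difficulty here to be bookkeeping rather than analytic depth: carefully matching the finite-dimensional weak formulation on $\mathbb{C}^{n_k}$ — after identifying $L^2(\mathbb{C}^{n_k},\mathcal{N}^{n_k})$ with a subspace of $L^2(\ell^p,P)$ — with the infinite-dimensional one, and legitimizing the use of cylinder (rather than compactly supported) test functions via the cutoff/domination device. The limit passage itself is painless only because the index sum on the right-hand side has boundedly many nonzero terms, so weak convergence of the coefficients suffices; were that sum genuinely infinite one would instead need a uniform $L^2$ bound on its tails.
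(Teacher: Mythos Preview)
Your proposal is correct and follows essentially the same route as the paper: componentwise weak convergence of the coefficients, the cutoff device to upgrade cylinder test functions $\varphi\in\mathscr{C}_c^\infty(\mathbb{C}^m)$ to legitimate test functions on $\mathbb{C}^{n_k}$ (this is precisely the ``argument similar to the proof of \eqref{noncompact test}'' that the paper invokes), and then a termwise weak limit in the defining identity \eqref{weak def of st form}, exploiting that only finitely many $\varepsilon_{i,K}^{J}$ survive. Your treatment is in fact slightly more explicit than the paper's in two places --- you spell out the norm bound via weak lower semicontinuity and you track the $(-1)^s$ sign through to the end --- and your closing remark about using the weak closedness of the graph of $T$ is a clean alternative that the paper does not mention.
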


\begin{proof}
Suppose $I=(i_1,\cdots,i_s)$, $J=(j_1,\cdots,j_t)$ and $K=(k_1,\cdots,k_{t+1})$ are multi-indices with strictly increasing order.  For each $n\in \mathbb{N}$, suppose that the solution $u_n\in D_{T_n}$ to $T_nu_n=M_nf$ given above takes the following form:
\begin{eqnarray*}
u_n= \sum_{|I|=s,|J|=t\, \max (I\cup J)\leq n}'u_{I,J}^n\,\mathrm{d}z^I\wedge \mathrm{d}\overline{z}^J,
\end{eqnarray*}
for some $u_{I,J}^n\in L^2(\ell^p)$. Also, write
\begin{eqnarray*}
u= \sum_{|I|=s,|J|=t}'u_{I,J}\,\mathrm{d}z^I\wedge \mathrm{d}\overline{z}^J,
\end{eqnarray*}
where $u_{I,J}\in L^2(\ell^p)$. By \eqref{200209e1},  it is easy to see that
\begin{equation}\label{200209e6}
\lim\limits_{k\to\infty}(u_{I,J}^{n_k},g)_{L^2(\ell^p)}=(u_{I,J},g)_{L^2(\ell^p)},\quad\forall\;g\in L^2(\ell^p).
\end{equation}

For $\varphi\in \mathscr{C}_c^{\infty}$, there exists $m\in\mathbb{N}$ such that $\varphi\in \mathscr{C}_c^{\infty}(\mathbb{C}^m)$. Set $n\triangleq \max(\max(I\cup K),m)$. We have
\begin{eqnarray}\label{key existence formula}
&&-\int_{\ell^p}\sum_{1\leq j<\infty}\sum_{|J|=t}'\varepsilon_{j, J}^{K}u_{I,J}\cdot\overline{\delta_j\varphi} \,\mathrm{d}P\\
&=&-\int_{\ell^p}\sum_{1\leq j\leq n}\sum_{|J|=t,\,\max J\leq n}'\varepsilon_{j, J}^{K}u_{I,J}\cdot\overline{\delta_j\varphi} \,\mathrm{d}P\nonumber\\
&=&-\sum_{1\leq j\leq n}\sum_{|J|=t,\,\max J\leq n}'\varepsilon_{j, J}^{K}\cdot\int_{\ell^p}u_{I,J} \cdot\overline{\delta_j\varphi} \,\mathrm{d}P.\nonumber
\end{eqnarray}
For each $n_k\geq n$ one can show that
\begin{eqnarray*}
&&-\sum_{1\leq j\leq n}\sum_{|J|=t,\,\max J\leq n}'\varepsilon_{j, J}^{K}\cdot\int_{\ell^p}u_{I,J}^{n_k}  \cdot\overline{\delta_j\varphi} \,\mathrm{d}P\\
&=&-\sum_{1\leq j\leq n_k}\sum_{|J|=t,\,\max J\leq n_k}'\varepsilon_{j, J}^{K}\cdot\int_{\mathbb{C}^{n_k}}u_{I,J}^{n_k}  \cdot\overline{\delta_j\varphi} \,\mathrm{d}\mathcal {N}^{n_k}\\
&=& \int_{\mathbb{C}^{n_k}}\Pi_{n_k}f_{I,K} \cdot\overline{\varphi} \,\mathrm{d}\mathcal {N}^{n_k}
=\int_{\ell^p}\Pi_{n_k}f_{I,K} \cdot\overline{\varphi} \,\mathrm{d}P,
\end{eqnarray*}
where the second equality follows from $T_{n_k}u_{n_k}=M_{n_k}f$ and an argument similar to the proof of \eqref{noncompact test}. Letting $k\to\infty$ in the above equality and noting \eqref{200209e6}, we obtain that
\begin{eqnarray*}
-\sum_{1\leq j\leq n}\sum_{|J|=t,\,\max J\leq n}'\varepsilon_{j, J}^{K}\cdot\int_{\ell^p}u_{I,J}  \cdot\overline{\delta_j\varphi} \,\mathrm{d}P=\int_{\ell^p} f_{I,K} \cdot\overline{\varphi} \,\mathrm{d}P.
\end{eqnarray*}
Combining this with (\ref{key existence formula}), we arrive at
\begin{eqnarray*}
-\int_{\ell^p}\sum_{1\leq j<\infty}\sum_{|J|=t}'\varepsilon_{j, J}^{K}u_{I,J}\cdot\overline{\delta_j\varphi} \,\mathrm{d}P=\int_{\ell^p} f_{I,K} \cdot\overline{\varphi} \,\mathrm{d}P.
\end{eqnarray*}
By the definition of $T$ at (\ref{weak def of st form}), the arbitrariness of $\varphi, I$ and $K$, we conclude that $Tu=f$.

Finally, the desired estimate \eqref{200209e5} follows from \eqref{200209e3} and \eqref{200209e1}. This completes the proof of Theorem \ref{existence theorem}.
\end{proof}

Similarly to the case of finite dimensions, we introduce the following notion:

\begin{definition}
A form $v\in L^2_{(s,t)}(\ell^p)$ is called analytic if $v\in D_T$ and $Tv=0$.
\end{definition}

Denote by $\mathscr{A}_{(s,t)}(\ell^p)$ the set of all analytic $L^2_{(s,t)}(\ell^p)$-forms. As a consequence of Theorem \ref{existence theorem}, we have the following result:

\begin{corollary}\label{uniqueness theorem}
For any $f\in N_S$, the solution $u\in L^2_{(s,t)}(\ell^p)$ given in Theorem \ref{existence theorem} is unique in the space $D_T\cap N_T^\bot$, and  the set of solutions to the $\overline{\partial}$ equation \eqref{d-bar equation} on $\ell^p$ is as follows:
$$
U\equiv\{u+v:v\in \mathscr{A}_{(s,t)}(\ell^p)\}.
$$
Moreover, $u\bot \mathscr{A}_{(s,t)}(\ell^p)$.
\end{corollary}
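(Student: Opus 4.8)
The plan is to read Corollary \ref{uniqueness theorem} off Theorem \ref{existence theorem} together with the abstract picture of Remark \ref{estimation rem}, the only non-formal point being the identification of the distinguished solution. I would begin with two observations. First, by the definition of analytic forms, $\mathscr{A}_{(s,t)}(\ell^p)=N_T$; and since $T$ is closed (as established in Subsection \ref{subsec2.5}), $N_T$ is a closed subspace of $L^2_{(s,t)}(\ell^p)$, so $L^2_{(s,t)}(\ell^p)=N_T\oplus N_T^\bot$ orthogonally. Second, Theorem \ref{existence theorem} shows that every $f\in N_S$ lies in $R_T$; combined with $R_T\subset N_S$ (from Lemmas \ref{db is closable} and \ref{lem2.5.1}), this gives $R_T=N_S$, so the abstract setting of Lemma \ref{lower bounded lemma} and Remark \ref{estimation rem} applies with $H_1=L^2_{(s,t)}(\ell^p)$, $H_2=L^2_{(s,t+1)}(\ell^p)$, $F=N_S$.

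Granting this, uniqueness in $D_T\cap N_T^\bot$ is immediate: if $u,u'\in D_T\cap N_T^\bot$ both satisfy $Tu=Tu'=f$, then $u-u'\in N_T\cap N_T^\bot=\{0\}$. The solution set is obtained by the standard affine argument: if $u'$ is any form with $\overline{\partial}u'=f$, then $u'\in D_T$ and $T(u'-u)=f-f=0$, so $u'-u\in N_T=\mathscr{A}_{(s,t)}(\ell^p)$; conversely $T(u+v)=Tu+Tv=f$ for every $v\in\mathscr{A}_{(s,t)}(\ell^p)\subset D_T$. Hence $U=\{u+v:v\in\mathscr{A}_{(s,t)}(\ell^p)\}$, and the final claim $u\bot\mathscr{A}_{(s,t)}(\ell^p)$ is just a restatement of $u\in N_T^\bot$. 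Thus it remains to verify that the specific $u$ of Theorem \ref{existence theorem} belongs to $N_T^\bot$; once that is known, uniqueness identifies it as the distinguished solution, and the bound \eqref{200209e5} is the one already proved.

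To see $u\in N_T^\bot$ I would go back to the construction in Theorem \ref{existence theorem}: $u$ is a weak limit of a subsequence $u_{n_k}$ of the finite-dimensional solutions $u_n\in D_{T_n}$ of $T_nu_n=M_nf$ supplied by Corollary \ref{200208t21}, which, being produced through Corollary \ref{estimation lemma} and Lemma \ref{lower bounded lemma}, are exactly the elements of $D_{T_n}\cap N_{T_n}^\bot$. Fix $v\in N_T=\mathscr{A}_{(s,t)}(\ell^p)$. The argument of Lemma \ref{200209l1}, applied to $T$ in place of $S$, shows $M_nv\in N_{T_n}$ for each $n$; since $M_n$ is the orthogonal projection of $L^2_{(s,t)}(\ell^p)$ onto its closed subspace $L^2_{(s,t)}(\mathbb{C}^n)$ while $u_{n_k}$ already lies in $L^2_{(s,t)}(\mathbb{C}^{n_k})$, we get $(u_{n_k},v)_{L^2_{(s,t)}(\ell^p)}=(u_{n_k},M_{n_k}v)_{L^2_{(s,t)}(\mathbb{C}^{n_k})}=0$, the last equality because $u_{n_k}\perp N_{T_{n_k}}$. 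Passing to the weak limit through \eqref{200209e1} gives $(u,v)_{L^2_{(s,t)}(\ell^p)}=0$, and since $v\in N_T$ was arbitrary, $u\in N_T^\bot$, as needed.

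I expect this orthogonality verification to be the only genuine obstacle; everything else is soft functional analysis paralleling Remark \ref{estimation rem}. Should the bookkeeping with the $u_n$ turn out to be inconvenient, a robust fallback is simply to replace $u$ by its orthogonal projection onto $N_T^\bot$: this affects neither membership in $D_T$, nor the identity $Tu=f$, nor the bound \eqref{200209e5} (an orthogonal projection never increases the norm), after which uniqueness closes the argument.
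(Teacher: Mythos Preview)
Your proposal is correct and follows the same abstract route as the paper --- the paper's proof is literally one sentence invoking Lemma~\ref{lower bounded lemma}, Theorem~\ref{existence theorem}, and Remark~\ref{estimation rem}. Where you go beyond the paper is in explicitly verifying that the weak limit $u$ constructed in Theorem~\ref{existence theorem} actually lies in $N_T^\bot$: the paper's terse argument does not address this identification and appears either to leave it implicit or to tacitly rely on the projection fallback you describe at the end. Your verification via $u_{n_k}\perp N_{T_{n_k}}$, $M_{n_k}v\in N_{T_{n_k}}$, and passage to the weak limit is sound (the analogue of Lemma~\ref{200209l1} for $T$ in place of $S$ goes through verbatim, and the $u_n$ supplied by Corollary~\ref{200208t21} via Corollary~\ref{estimation lemma} are indeed in $N_{T_n}^\bot$), so your proof is in fact more complete than the paper's on this point.
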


\begin{proof}
By means of Lemma \ref{lower bounded lemma} and Theorem \ref{existence theorem}, noting Remark \ref{estimation rem}, we obtain Corollary \ref{uniqueness theorem} immediately.
\end{proof}

Another consequence of Theorem \ref{existence theorem} is the following interesting $L^2$ estimate for general $(s,t+1)$-forms on $\ell^p$:
\begin{corollary}\label{non-direct-L2-estimate}
It holds that
\begin{eqnarray}\label{200209tt8}
||f||_{L^2_{(s,t+1)}(\ell^p)}\leq ||T^*f||_{L^2_{(s,t)}(\ell^p)},\quad\forall\,f\in N_{S}\cap D_{T^*}.
\end{eqnarray}
\end{corollary}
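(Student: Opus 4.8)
The plan is to deduce \eqref{200209tt8} directly from the existence theorem (Theorem \ref{existence theorem}) by a short duality argument, in the spirit of the reduction recorded in Corollary \ref{estimation lemma}. The key point is that Theorem \ref{existence theorem} provides not merely solvability of $Tu=f$ on $N_S$, but solvability together with the sharp bound $||u||_{L^2_{(s,t)}(\ell^p)}\le ||f||_{L^2_{(s,t+1)}(\ell^p)}$; it is precisely this constant $1$ that will propagate to the estimate \eqref{200209tt8}.

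Here are the steps in order. First I would dispose of the trivial case $f=0$, for which the inequality is obvious. So fix $f\in N_S\cap D_{T^*}$ with $f\neq 0$. Since $f\in N_S$, Theorem \ref{existence theorem} produces $u\in D_T$ with $Tu=f$ and $||u||_{L^2_{(s,t)}(\ell^p)}\le ||f||_{L^2_{(s,t+1)}(\ell^p)}$. Next, because $u\in D_T$ and $f\in D_{T^*}$, the definition of the adjoint gives $(Tu,f)_{L^2_{(s,t+1)}(\ell^p)}=(u,T^*f)_{L^2_{(s,t)}(\ell^p)}$. Combining these with the Cauchy--Schwarz inequality yields
\begin{eqnarray*}
||f||_{L^2_{(s,t+1)}(\ell^p)}^2 &=& (Tu,f)_{L^2_{(s,t+1)}(\ell^p)} = (u,T^*f)_{L^2_{(s,t)}(\ell^p)}\\
&\le& ||u||_{L^2_{(s,t)}(\ell^p)}\cdot ||T^*f||_{L^2_{(s,t)}(\ell^p)} \le ||f||_{L^2_{(s,t+1)}(\ell^p)}\cdot ||T^*f||_{L^2_{(s,t)}(\ell^p)}.
\end{eqnarray*}
Dividing by $||f||_{L^2_{(s,t+1)}(\ell^p)}>0$ gives \eqref{200209tt8}. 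Alternatively, one could invoke conclusion 2) of Corollary \ref{estimation lemma} together with the fact (contained in Theorem \ref{existence theorem}) that $R_T=N_S$, but the direct pairing argument is cleaner and keeps the constant transparent.

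I do not expect a genuine obstacle here: the corollary is essentially a repackaging of Theorem \ref{existence theorem}. The only points requiring mild care are (i) that one genuinely uses the membership $f\in N_S$, not just $\overline{\partial}$-closedness, in order to apply Theorem \ref{existence theorem} — but within $L^2_{(s,t+1)}(\ell^p)$ these coincide, since Lemmas \ref{db is closable} and \ref{lem2.5.1} give $R_T\subset N_S$ with $N_S$ closed, so $N_S$ is exactly the domain on which Theorem \ref{existence theorem} operates; and (ii) the harmless exclusion of the case $f=0$ before dividing. No new $L^2$ estimates, mollification, or measure-theoretic input is needed beyond what is already established.
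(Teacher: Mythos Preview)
Your proposal is correct and is essentially the same approach as the paper's: both deduce \eqref{200209tt8} from Theorem \ref{existence theorem} via the equivalence in conclusion 2) of Corollary \ref{estimation lemma}, with your version simply unpacking the duality pairing explicitly (which has the advantage of making the constant $1$ transparent).
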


\begin{proof}
By means of the conclusion 2) in Corollary \ref{estimation lemma} and using the solvability result in Theorem \ref{existence theorem}, we obtain Corollary \ref{non-direct-L2-estimate} immediately.
\end{proof}

\begin{remark}
Unlike the case of finite dimensions, the estimate \eqref{200209tt8} in Corollary \ref{non-direct-L2-estimate} follows from the solvability result in Theorem \ref{existence theorem} (Recall that, the solvability of the finite dimensional $\overline{\partial}$ equation follows from $L^2$ estimates in the style of \eqref{weakestimation111lower bounded} in Remark \ref{remark2-9-2}). At this moment, it is unclear for us whether it is possible to prove the estimate \eqref{200209tt8} directly without using Theorem \ref{existence theorem}.
\end{remark}

\begin{example}\label{xmple1}
Let us revisit below Coeur\'{e} and Lempert's counterexample (in \cite{Lem99}) of $(0,1)$-form $f$ of class $C^{p-1}$ on $\ell^p$ for which the $\overline{\partial}$ equation $(\ref{d-bar equation})$ admits no (continuous) solution on any nonempty open sets of $\ell^p$, where $p\in \mathbb{N}$. In \cite[Section 9]{Lem99}, Lempert constructed the following $\overline{\partial}$-closed $(0,1)$-form on $\ell^p$:
\begin{eqnarray*}
f_0(\textbf{z},\textbf{z}^1)=\sum_{n=1}^{\infty}\psi(z_n)\overline{z_n^1},\,\quad\forall\,\textbf{z}=(z_n)=(x_n+\sqrt{-1}y_n),\,\textbf{z}^1=(z_n^1)\in\ell^p,
\end{eqnarray*}
where $\psi\in C^{p-1}(\mathbb{C})$ is any given function with compact support in $\mathbb{C}$ such that
$$
\left\{
\begin{array}{ll}
|\psi(z)|\leq |z|^{p-1},\quad \forall\; z\in\mathbb{C}, \\[3mm]
\psi(0)=0, \\[3mm]
\psi(z)=\frac{z^p}{\overline{z}\log |z|^2},\quad \hbox{for }0<|z|<1.
\end{array}
\right.
$$
A direct computations shows that
\begin{eqnarray*}
\sum_{n=1}^{\infty}2a_n^2\int_{\ell^p}|\psi(z_n)|^2\,\mathrm{d}P
&\leq& \sum_{n=1}^{\infty}2a_n^2\int_{\ell^p}|z_n|^{2(p-1)}\,\mathrm{d}P\\
&\leq& \sum_{n=1}^{\infty}2^pa_n^2\int_{\mathbb{R}^2}(|x_n|^{2(p-1)}+|y_n|^{2(p-1)})\,\mathrm{d}\mathcal {N}_{a_n}\\
&\leq&\frac{2^{2p}\Gamma(\frac{2p-1}{2})}{\sqrt{\pi}}\sum_{n=1}^{\infty}a_n^{2p}
<\frac{2^{2p}\Gamma(\frac{2p-1}{2})}{\sqrt{\pi}}\sum_{n=1}^{\infty}a_n<\infty,
\end{eqnarray*}
which implies that $f_0\in L^2_{(0,1)}(\ell^p)$. Recall that the definition of the $\overline{\partial}$ operator at (\ref{weak def of st form})-(\ref{d-dar-f-defi}) for $(0,1)$-form is the following: Suppose that $g\in L^2_{(0,1)}(\ell^p)$ has the following expansion:
\begin{eqnarray*}
g(\textbf{z},\textbf{z}^1)=\sum_{n=1}^{\infty}g_n(\textbf{z})\overline{z_n^1},\,\quad\forall\,\textbf{z}=(z_n),\,\textbf{z}^1=(z_n^1)\in\ell^p.
\end{eqnarray*}
If for each $1\leq i<j<\infty$, there exists $G_{ij}\in L^2(\ell^p,P)$ such that
\begin{eqnarray}
\int_{\ell^p}G_{ij}\overline{\varphi}\,\mathrm{d}P=-\int_{\ell^p}\big(g_{j}\cdot\overline{\delta_i\varphi} -g_{i}\cdot\overline{\delta_j\varphi} \big)\,\mathrm{d}P,\quad\forall\;\varphi\in \mathscr{C}_c^{\infty},
\end{eqnarray}
and $\sum\limits_{1\leq i<j<\infty} 4a_i^2a_j^2\int_{\ell^p}|G_{ij}|^2\,\mathrm{d}P<\infty$, then
\begin{eqnarray*}
\overline{\partial}g(\textbf{z},\textbf{z}^1,\textbf{z}^{2})=\sum_{ 1\leq i<j<\infty, J=(i,j)}G_{ij}(\textbf{z})(\mathrm{d}\overline{z}^{J})(\textbf{z}^1,\textbf{z}^{2}),\,\quad\forall\,\textbf{z},\,\textbf{z}^1,\,\textbf{z}^2\in\ell^p.
\end{eqnarray*}
Note that for each $\varphi\in \mathscr{C}_c^{\infty}$, positive integer $p>1$ and $1\leq i<j<\infty$, we have
\begin{eqnarray*}
\int_{\ell^p}\big(\psi(z_{j})\cdot\overline{\delta_i\varphi} -\psi(z_{i})\cdot\overline{\delta_j\varphi} \big)\,\mathrm{d}P
=-\int_{\ell^p}\bigg(\frac{\partial\psi(z_{j})}{\partial \overline{z_i}}\cdot\overline{ \varphi} -\frac{\partial\psi(z_{i})}{\partial \overline{z_j}}\cdot\overline{ \varphi} \bigg)\,\mathrm{d}P=0.
\end{eqnarray*}
When $p=1$, a simple computation shows that $\lim\limits_{|z|\to 0}|z|\cdot\big|\frac{\psi(z)}{\partial \overline{z}}\big|=0$ and $\psi$ is $C^{\infty}$ in $\{z\in \mathbb{C}:0<|z|<1\}$. Hence, there exists $\{\psi_k\}_{k=1}^{\infty}\subset C_c^{\infty}(\mathbb{C})$ such that
$$
\left\{
\begin{array}{ll}
0\leq \psi_k\leq 1,\\[3mm]
\psi_k(z)=0,\quad\forall\,|z|<\frac{1}{k}, \\[3mm]
\lim\limits_{k\to\infty}\psi_k(z)=1,\quad\forall\,z\in\mathbb{C}.
\end{array}
\right.
$$
Then $\{\psi_k\psi\}_{k=1}^{\infty}\subset C_c^{1}(\mathbb{C})$ and
\begin{eqnarray*}
&&\int_{\ell^1}\big(\psi_k(z_{j})\psi(z_{j})\cdot\overline{\delta_i\varphi} -\psi_k(z_{i})\psi(z_{i})\cdot\overline{\delta_j\varphi} \big)\,\mathrm{d}P\\
&=&-\int_{\ell^p}\bigg(\frac{\partial(\psi_k(z_{j})\psi(z_{j}))}{\partial \overline{z_i}}\cdot\overline{ \varphi} -\frac{\partial(\psi_k(z_{i})\psi(z_{i}))}{\partial \overline{z_j}}\cdot\overline{ \varphi} \bigg)\,\mathrm{d}P=0,
\end{eqnarray*}
letting $k\to\infty$ in the above, for $1\leq i<j<\infty$, we have
\begin{eqnarray*}
\int_{\ell^1}\big(\psi(z_{j})\cdot\overline{\delta_i\varphi} -\psi(z_{i})\cdot\overline{\delta_j\varphi} \big)\,\mathrm{d}P
=0.
\end{eqnarray*}
Hence, $f_0\in N_{\overline{\partial}}$. Now, by our Theorem \ref{existence theorem}, we conclude that there exists $u_0\in D_{\overline{\partial}}\subset L^2(\ell^p,P)$ such that $\overline{\partial}u_0=f_0$ for each positive integer $p$.
\end{example}

To end this section, denote by $H^2(\ell^2,P)$ the closure of the subspace $\text{span}\;\{ \textbf{z}^{\alpha} :\alpha \in \mathbb{N}^{(\mathbb{N})}\}$ in $L^2(\ell^2,P)$. Stimulated by the so-called monomial expansions of analytic functions as in \cite{DMP, Lem99, Rya}, $H^2(\ell^2,P)$ can be viewed as an infinite dimensional Hardy space (in the $L^2$ level).
Further, for non-negative integers $s$ and $t$, let $I=(1,2,\cdots,s)$, $J=(1,2,\cdots,t,t+1)$ and
\begin{eqnarray}\label{ex of (s,t+1)-form}
f(\textbf{z},\textbf{z}^1,\cdots,\textbf{z}^{s+t+1})= g(\textbf{z})(\mathrm{d}z^I\wedge \mathrm{d}\overline{z}^J)(\textbf{z}^1,\cdots,\textbf{z}^{s+t+1}),
\end{eqnarray}
where $\textbf{z},\,\textbf{z}^l=(z_j^l)\in\ell^p,\,\,1\leq l\leq s+t+1$ and $g\in L^2(\ell^p,P)$. If $g\in\mathscr{C}_c^{\infty}$, then $f\in\mathscr{D}_{(s,t+1)}$ and by (\ref{formula of d-bar})
\begin{eqnarray*}
\overline{\partial}f= (-1)^{s+t+1} \sum_{ t+2\leq j<\infty, J_j=(1,2,\cdots,t+1,j)} \frac{\partial g(\textbf{z})}{\partial \overline{z_j}}(\mathrm{d}z^I\wedge \mathrm{d}\overline{z}^{J_j})(\textbf{z}^1,\cdots,\textbf{z}^{s+t+2})\in \mathscr{D}_{(s,t+2)}.
\end{eqnarray*}
We have the following result:

\begin{proposition}\label{Hardy test example}
If $g\in H^2(\ell^2,P)$ and $f$ is given as in (\ref{ex of (s,t+1)-form}), then $f\in \mathscr{A}_{(s,t+1)}(\ell^p)$.
\end{proposition}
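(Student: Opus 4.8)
The plan is to reduce the statement to the case where $g$ is a single holomorphic monomial, to settle that case by a weak integration-by-parts identity, and then to pass to the limit using the closedness of $\overline{\partial}$.

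Recall that $\mathscr{A}_{(s,t+1)}(\ell^p)$ consists precisely of the forms $v\in D_{\overline{\partial}}$ with $\overline{\partial}v=0$, where $\overline{\partial}$ denotes here the operator from $L^2_{(s,t+1)}(\ell^p)$ into $L^2_{(s,t+2)}(\ell^p)$; thus it suffices to show $f\in D_{\overline{\partial}}$ and $\overline{\partial}f=0$. By the definition \eqref{weak def of st form}--\eqref{d-dar-f-defi}, and since in \eqref{ex of (s,t+1)-form} the only nonzero component is $f_{I,J}=g$ with $I=(1,\dots,s)$, $J=(1,\dots,t+1)$, while $\varepsilon_{i,J}^{K}\neq0$ only when $K=J\cup\{i\}$ for some $i\geq t+2$, the claim reduces to showing that one may take every $g_{I,K}\equiv0$, i.e.
\begin{equation}\label{plan-IBP}
\int_{\ell^p} g\cdot\overline{\delta_i\varphi}\,\mathrm{d}P=0,\qquad\forall\,\varphi\in\mathscr{C}_c^{\infty},\ i\geq t+2
\end{equation}
(the $L^2$ norm condition in the definition of $\overline{\partial}f$ being then trivially satisfied).

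I would first verify \eqref{plan-IBP} for $g=\textbf{z}^{\alpha}$ a monomial. Both $g$ and $\delta_i\varphi$ are cylindrical, so the integral reduces to one over $\mathbb{C}^N$ against $\mathcal{N}^N$ for $N$ large enough. Using the elementary identity $e^{-|z_i|^2/(2a_i^2)}\,\delta_i\varphi=\partial_{z_i}\!\big(e^{-|z_i|^2/(2a_i^2)}\varphi\big)$ and integrating by parts in the $z_i$-variable---equivalently, extending the identity \eqref{22.1.8'} from $\mathscr{C}_c^{\infty}$ to a polynomial first argument by the same cut-off device used in the proof of \eqref{noncompact test}---one gets $\int_{\ell^p} g\cdot\overline{\delta_i\varphi}\,\mathrm{d}P=-\int_{\ell^p}\big(\partial g/\partial\overline{z_i}\big)\,\overline{\varphi}\,\mathrm{d}P=0$, the last equality because $\textbf{z}^{\alpha}$ is holomorphic and the boundary terms vanish thanks to the compact support of $\varphi$ and the decay of the Gaussian weight. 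By linearity of $\overline{\partial}$ the same holds for every $g\in\mathrm{span}\{\textbf{z}^{\alpha}:\alpha\in\mathbb{N}^{(\mathbb{N})}\}$, so the corresponding form \eqref{ex of (s,t+1)-form} lies in $D_{\overline{\partial}}$ with $\overline{\partial}f=0$.

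Finally, for general $g\in H^2(\ell^2,P)$ I would approximate: pick holomorphic polynomials $g_n\to g$ in $L^2(\ell^2,P)$; since by Remark \ref{identify remark} the $L^2$-norm of a monomial is the same on $\ell^p$ as on $\mathbb{C}^N$, we also have $g_n\to g$ in $L^2(\ell^p,P)$, and the forms $f_n$ associated with $g_n$ satisfy $\|f_n-f\|_{L^2_{(s,t+1)}(\ell^p)}^2=2^{s+t+1}\textbf{a}^{I,J}\|g_n-g\|_{L^2(\ell^p,P)}^2\to0$ by \eqref{norm condition for st form}, while $\overline{\partial}f_n=0$ for all $n$ by the previous step. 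Since $\overline{\partial}$ is a closed operator from $L^2_{(s,t+1)}(\ell^p)$ into $L^2_{(s,t+2)}(\ell^p)$, it follows that $f\in D_{\overline{\partial}}$ and $\overline{\partial}f=0$, i.e. $f\in\mathscr{A}_{(s,t+1)}(\ell^p)$. The only genuinely delicate step is \eqref{plan-IBP} for a non-compactly-supported polynomial $g$: one must legitimately integrate by parts against the Gaussian weight in spite of the polynomial growth of $g$, which is handled by the truncation argument already employed for \eqref{noncompact test}; everything else is routine bookkeeping with multi-indices and the definitions.
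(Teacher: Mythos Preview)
Your proposal is correct and follows essentially the same route as the paper: reduce to the weak identity \eqref{plan-IBP}, verify it for holomorphic polynomials via integration by parts (which, as you correctly note, requires the cut-off device of \eqref{noncompact test} since the polynomial is not compactly supported), and then pass to the limit. The only minor difference is in the limit step: the paper observes that $\delta_j\varphi\in L^2(\ell^p,P)$ and hence passes to the limit \emph{directly in the integral} \eqref{plan-IBP}, obtaining the weak identity for $g$ itself in one line, whereas you first conclude $\overline{\partial}f_n=0$ for the polynomial approximants and then invoke closedness of $\overline{\partial}$. Both are valid; the paper's variant is marginally shorter, while yours makes the role of closedness explicit.
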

\begin{proof}
By $g\in H^2(\ell^2,P)$, there exists $\{g_n\}_{n=1}^{\infty}\subset \text{span}\;\{ \textbf{z}^{\alpha} :\alpha \in \mathbb{N}^{(\mathbb{N})}\}$ such that $\lim\limits_{n\to\infty}g_n=g$ in $L^2(\ell^p,P)$. Hence, for each $\varphi\in \mathscr{C}_c^{\infty}$ and $t+2\leq j<\infty$, let $ J_j=(1,2,\cdots,t+1,j)$ and we have
\begin{eqnarray*}
-\int_{\ell^p}\sum_{1\leq i<\infty} \varepsilon_{i, J}^{J_j}g\cdot\overline{\delta_i\varphi} \,\mathrm{d}P
&=&(-1)^{t+2} \int_{\ell^p}g\cdot\overline{\delta_j\varphi} \,\mathrm{d}P\\
&=&(-1)^{t+2}\lim\limits_{n\to\infty}\int_{\ell^p}g_n\cdot\overline{\delta_j\varphi} \,\mathrm{d}P\\
&=&(-1)^{t+3}\lim\limits_{n\to\infty}\int_{\ell^p}\frac{\partial g_n}{\partial \overline{z_j}}\cdot\overline{ \varphi} \,\mathrm{d}P=0.
\end{eqnarray*}
By (\ref{weak def of st form}), we conclude that $f\in\mathscr{A}_{(s,t+1)}(\ell^p)$. This completes the proof of Proposition \ref{Hardy test example}.
\end{proof}

\section*{Acknowledgement}
This work is partially supported by NSF of China under grants 11931011 and 11821001.


\end{document}